\newcommand{\supp}{\mathrm{supp}\,}
\newcommand{\tr}{\mathrm{tr}\,}
\newcommand{\mc}{\mathcal}
\DeclareMathOperator{\diag}{diag}
\DeclareMathOperator{\fraco}{frac}
\newcommand{\argmin}{\operatornamewithlimits{argmin}}
\newcommand{\bbN}{\mathbb{N}}
\newcommand{\bbR}{\mathbb{R}}
\newcommand{\bbP}{\mathbb{P}}
\DeclareMathOperator{\1}{\mathrm{ \mathbf{1}}}
\def\Q1{Q}
\renewcommand{\P}{\mathrm P}
\renewcommand{\S}{\mathrm S}
\newcommand{\PM}{\mathrm{P}}
\newcommand{\SP}{\mathrm{S}}
\newcommand{\LAn}{\mc L_{\mu_n}}
\DeclareMathOperator{\e}{\mathrm e}
\newcommand{\ev}[1]{ {\boldsymbol{\mathbb  E}} \left[  #1 \right]}
\newcommand{\evalt}[2]{ {\boldsymbol{\mathbb  E}}_{#1} \left[  #2 \right]}
\renewcommand{\d}{\mathrm{d}}
\DeclareMathOperator{\Var}{Var}
\newcommand{\valpha}{\boldsymbol{\alpha}}
\newcommand{\ve}{\boldsymbol{e}}
\newcommand{\vnu}{{\bm{\nu}}}
\theoremstyle{definition}
\newtheorem{example}{Example}
\newtheorem{assumption}{Assumption}
\theoremstyle{remark}
\newtheorem{remark}{Remark}
\theoremstyle{plain}
\newtheorem{proposition}{Proposition}
\newtheorem{theorem}{Theorem}
\newtheorem{lemma}{Lemma}
\newtheorem{corollary}{Corollary}
\newcommand{\rev}[1]{\textcolor{black}{#1}}
\definecolor{darkorange}{RGB}{255,140,0}
\title{On the Convergence of the Laplace Approximation and {Noise-Level-Robustness of Laplace-based Monte Carlo Methods} for Bayesian Inverse Problems}
\author{Claudia Schillings$^\dag$, Bj\"orn Sprungk$^\ddag$ and Philipp Wacker$^\ast$\\[10pt]
\small
$^\dag$ University of Mannheim, Institute for Mathematics, Mannheim, Germany,\\ \small c.schillings@uni-mannheim.de\\
\small
$^\ddag$ Technische Universit\"at Bergakademie Freiberg, Faculty of Mathematics and Computer Science, Freiberg, Germany\\ 
\small bjoern.sprungk@math.tu-freiberg.de\\
\small $^\ast$ Friedrich-Alexander-Universit\"at Erlangen-N\"urnberg, Department of Mathematics, Erlangen, Germany,\\ \small phkwacker@gmail.com\\
}
\date{}
\begin{document}
\maketitle

%% ------------------------------------------------------------------
%% ABSTRACT
%% ------------------------------------------------------------------
%\begin{tcbverbatimwrite}{tmp_\jobname_abstract.tex}
\begin{abstract}
The Bayesian approach to inverse problems provides a rigorous framework for the incorporation and quantification of uncertainties in measurements, parameters and models. 
We are interested in designing numerical methods which are robust w.r.t.~the size of the observational noise, i.e., methods which behave well in case of concentrated posterior measures.
The concentration of the posterior is a highly desirable situation in practice, since it relates to informative or large data.
However, it can pose a computational challenge for numerical methods based on the prior or reference measure. 
We propose to employ the Laplace approximation of the posterior as the base measure for numerical integration in this context.
The Laplace approximation is a Gaussian measure centered at the maximum a-posteriori estimate and with covariance matrix depending on the logposterior density.
We discuss convergence results of the Laplace approximation in terms of the Hellinger distance and analyze the efficiency of Monte Carlo methods based on it. 
In particular, we show that Laplace-based importance sampling and Laplace-based quasi-Monte-Carlo methods are robust w.r.t.~the concentration of the posterior for large classes of posterior distributions and integrands whereas prior-based importance sampling and plain quasi-Monte Carlo are not.
Numerical experiments are presented to illustrate the theoretical findings.

\end{abstract}

\noindent
\textbf{Keywords:}
Bayesian inverse problems, Laplace approximation, importance sampling, quasi-Monte Carlo, uncertainty quantification\\%[0.5em]

\noindent
\textbf{Mathematics Subject Classification:} 65M32, 62F15, 60B10, 65C05, 65K10\\%[.5em]
%% ------------------------------------------------------------------
%% END HEADER
%% ------------------------------------------------------------------

%% ------------------------------------------------------------------
%% INTRODUTION
%% ------------------------------------------------------------------
\section{Introduction}
\label{sec:intro}
The identification of unknown parameters from noisy observations arises in various areas of application, e.g., engineering systems, biological models, environmental systems. 
In recent years, Bayesian inference has become a popular approach to model inverse problems \cite{Stuart2010}, i.e., 
{noisy observations are used to update the knowledge of unknown parameters from a prior distribution to the posterior distribution. 
The latter is then the solution of the Bayesian inverse problem and obtained by conditioning the prior distribution on the data.}
This approach is very appealing in various fields of applications, since uncertainty quantification can be performed, once the prior distribution is updated---barring the fact that Bayesian credible sets are not in a one-to-one correspondence to classical confidence sets, see \cite{cox1993analysis,szabo2015frequentist}.

To ensure the applicability of the Bayesian approach to computationally demanding models, there has been a lot of research effort towards improved algorithms allowing for effective sampling or integration w.r.t.~the resulting posterior measure.
For example, the computational burden of expensive forward or likelihood models can be reduced by surrogates or multilevel strategies \cite{MarzoukXiu2009,DodwellEtAl2015,HoangEtAl2013,ScheichlEtAl2016} and for many classical sampling or integration methods such as Quasi-Monte Carlo \cite{KuoActa}, Markov chain Monte Carlo \cite{CotterEtAl2013,RudolfSprungk2018,Vollmer2015}, and numerical quadrature \cite{SchillingsSchwab2013,PengEtAl2017} we now know modifications and conditions which ensure a dimension-independent efficiency.

However, a completely different, but very common challenge for many numerical methods has drawn surprisingly less attention so far: the challenge of concentrated posterior measures such as
\begin{equation}\label{equ:post}
	\mu_n(\d x) = \frac 1{Z_n} \exp\left(-n \Phi_n(x)\right) \mu_0(\d x),
	\quad 
	Z_n \coloneqq \int_{\bbR^d} \exp\left(-n \Phi_n(x)\right) \mu_0(\d x),
%	\qquad
%	{n\gg 1},%\in\bbN.
\end{equation}
Here, $n\gg 1$ and $\mu_0$ denotes a reference or prior probability measure on $\bbR^d$ and $\Phi_n\colon \bbR^d\to[0,\infty)$ are negative log-likelihood functions resulting, e.g., from $n$ observations.

From a modeling point of view the concentration effect of the posterior is a highly desirable situation due to large data sets and less remaining uncertainty about the parameter to be inferred.
From a numerical point of view, on the other hand, this can pose a delicate situation, since standard integration methods may perform worse and worse if the concentration increases due to $n\to\infty$.
Hence, understanding how sampling or quadrature methods for $\mu_n$ behave as $n\to\infty$ is a crucial task with immediate benefits for purposes of uncertainty quantification.
Since small noise yields ``small'' uncertainty, one might be tempted to consider only optimization-based approaches in order to compute a point estimator (i.e., the maximum a-posteriori estimator) for the unknown parameter which is usually computationally much cheaper than a complete Bayesian inference.
However, for quantifying the remaining risk, e.g., computing the posterior failure probability for some quantity of interest, we still require efficient integration methods for concentrated posteriors as $\mu_n$.
Nonetheless, we will use well-known preconditioning techniques from numerical optimization in order to derive such robust integration methods for the small noise setting.

Numerical methods are often based on the prior $\mu_0$, since $\mu_0$ is usually a simple measure allowing for direct sampling or explicit quadrature formulas. 
However, for large $n$ most of the corresponding sample points or quadrature nodes will be placed in regions of low posterior importance missing the needle in the haystack---the minimizers of $\Phi_n$.
An obvious way to circumvent this is to use a numerical integration w.r.t.~another reference measure which can be straightforwardly computed or sampled from and concentrates around those minimizers and shrinks like the posterior measures $\mu_n$ as $n\to \infty$.
In this paper we consider numerical methods based on a Gaussian approximation of $\mu_n$---the \emph{Laplace approximation.} 

When it comes to integration w.r.t.~an increasingly concentrated function, the well-known and widely used \emph{Laplace's method} provides explicit asymptotics for such integrals, i.e., under certain regularity conditions \cite{Wong2001} we have for $n\to \infty$ that
\begin{equation}\label{equ:weak}
	\int_{\bbR^d} f(x) \ \exp(-n \Phi(x)) \d x
	\ = \ f(x_\star) \frac{(2\pi)^{d/2}\, \exp(-n \Phi(x_\star))}{n^{d/2} \sqrt{\det\left(\nabla^2 \Phi(x_\star) \right)}}\ \left(1+\mc O(n^{-1})\right)
%	\qquad
%	\text{ as } n\to \infty,
\end{equation}
where $x_\star \in \bbR$ denotes the assumed unique minimizer of $\Phi\colon\bbR^d\to\bbR$.
This formula is derived by approximating $\Phi$ by its second-order Taylor polynomial at $x_\star$.
We could now use \eqref{equ:weak} and its application to $Z_n$ in order to derive that $\int_{\bbR^d} f(x) \ \mu_n(\d x) \to f(x_\star) $ as $n\to\infty$.
However, for finite $n$ this is only of limited use, e.g., consider the computation of posterior probabilities where $f$ is an indicator function.
Thus, in practice we still rely on numerical integration methods in order to obtain a reasonable approximation of the posterior integrals $\int_{\bbR^d} f(x) \ \mu_n(\d x)$.
Nonetheless, the second-order Taylor approximation employed in Laplace's method provides us with (a guideline to derive) a Gaussian measure approximating $\mu_n$.

This measure itself is often called the \emph{Laplace approximation} of $\mu_n$ and will be denoted by $\LAn$.
Its mean is given by the maximum a-posteriori estimate (MAP) of the posterior $\mu_n$ and its covariance is the inverse Hessian of the negative log posterior density.
Both quantities can be computed efficiently by numerical optimization and since it is a Gaussian measure it allows for direct samplings and easy quadrature formulas.
The Laplace approximation is widely used in optimal (Bayesian) experimental design to approximate the posterior distribution (see, for example, \cite{alexanderian2016fast}) and has been demonstrated to be particularly useful in the large data setting, see \cite{LongEtAl2013,reviewDOE} and the references therein for more details.
Moreover, in several recent publications the Laplace approximation was already proposed as a suitable reference measure for numerical quadrature \cite{SchillingsScaling,PengEtAl2017} or importance sampling \cite{BECK2018523}.
Note that preconditioning strategies based on Laplace approximation are also referred to as Hessian-based strategies due to the equivalence of the inverse covariance and the Hessian of the corresponding optimization problem, cp. \cite{PengEtAl2017}.
In \cite{SchillingsScaling}, the authors showed that a Laplace approximation-based adaptive Smolyak quadrature for Bayesian inference with affine parametric operator equations exhibits a convergence rate independent of the size of the noise, i.e., independent of $n$.

This paper extends the analysis in \cite{SchillingsScaling} for quadrature to the widely applied \emph{Laplace-based importance sampling} and \emph{Laplace-based quasi-Monte Carlo} (QMC) integration.

Before we investigate the scale invariance or robustness of these methods we examine the behaviour of the Laplace approximation and in particular, the density $\frac{\d \mu_n}{\d \LAn}$.
The reason behind is that, for importance sampling as well as QMC integration, this density naturally appears in the methods, hence, if it deteriorates as $n\to \infty$, this will be reflected in a deteriorating efficiency of the method.
For example, for $\Phi_n \equiv \Phi$ the density w.r.t.~the prior measure $\frac{\d \mu_n}{\d \mu_0} = \exp(-n\Phi)/Z_n$ deteriorates to a Dirac function at the minimizer $x_\star$ of $\Phi$ as $n\to\infty$ which causes the shortcomings of Monte Carlo or QMC integration w.r.t.~$\mu_0$ as $n\to\infty$.
However, for the Laplace approximation we show that the density $\frac{\d \mu_n}{\d \LAn}$ converges 
$\LAn$-almost everywhere to $1$ which in turn results in a robust---and actually improving---performance w.r.t.~$n$ of related numerical methods.
In summary, the main results of this paper are the following:
\begin{enumerate}
\item{\bf Laplace Approximation:}
Given mild conditions the Laplace approximation $\LAn$ converges in Hellinger distance to $\mu_n$:
\[
		d_\mathrm{H}(\mu_n, \LAn) \in \mc O(n^{-1/2}).
\]
This result is closely related to the well-known Bernstein--von Mises theorem for the posterior consistency in Bayesian inference  \cite{VanDerVaart1998}. 
The significant difference here is that the covariance in the Laplace approximation depends on the data and the convergence holds for the particularly observed data whereas in the classical Bernstein--von Mises theorem the covariance is the inverse of the expected Fisher information matrix and the convergence is usually stated in probability. 

\item{\bf Importance Sampling:} We consider integration w.r.t.~measures $\mu_n$ as in \eqref{equ:post} where $\Phi_n(x) = \Phi(x) - \iota_n$ for a $\Phi\colon \bbR^d \to [0,\infty)$ and $\iota_n \in \bbR$. \\[-0.35cm]
\begin{itemize}
\item \emph{Prior-based Importance Sampling:}
We consider the case of prior-based importance sampling, i.e., the prior $\mu_0$ is used as the importance distribution for computing the expectation of smooth integrands $f\in L^2_{\mu_0}(\bbR)$. \rev{Here, the asymptotic variance w.r.t.~such measures $\mu_n$ deteriorates like $n^{d/2-1}$.}

\item
\emph{Laplace-based Importance Sampling.}
The (random) error $e_{n,N}(f)$ of Laplace-based importance sampling for computing expectations of smooth integrands $f\in L^2_{\mu_0}(\bbR)$ w.r.t.~such measures $\mu_n$ using a fixed number of samples $N\in\bbN$ decays in probability almost like $n^{-1/2}$, i.e.,
\[
	n^\delta e_{n,N}(f) \xrightarrow[n\to\infty]{\bbP} 0,
	\qquad
	\delta < 1/2.
\]
\end{itemize}

\item{\bf Quasi-Monte Carlo:}
We focus for the analysis of the quasi-Monte Carlo methods on the bounded case of $\mu_0 = \mc U([\frac12,\frac12]^d)$. % and measures $\mu_n$ as in \eqref{equ:post} where again $\Phi_n(x) = \Phi(x) - \iota_n$.
\begin{itemize}
\item\emph{Prior-based Quasi-Monte Carlo:}
{The root mean squared error estimate for computing integrals of the form \eqref{equ:weak} by QMC using randomly shifted Lattice rules deteriorates like $n^{d/4}$ as $n\to \infty$.} 

\item\emph{Laplace-based Quasi-Monte Carlo:}
If the lattice rule is transformed by an affine mapping based on the mean and the covariance of the Laplace approximation, then the resulting root mean squared error decays like $n^{-d/2}$ for integrals of the form \eqref{equ:weak}.
\end{itemize}
\end{enumerate}

The outline of the paper is as follows: in Section \ref{sec:Laplace} we introduce the Laplace approximation for measures of the form \eqref{equ:post} and the notation of the paper.
In Section \ref{sec:conv} we study the convergence of the Laplace approximation.  We also consider the case of singular Hessians or {perturbed Hessians} and provide some illustrative numerical examples. {At the end of the section, we shortly discuss the relation to the classical Bernstein--von Mises theorem.}
The main results about importance sampling and QMC using the prior measure and the Laplace approximation, respectively, are then discussed in Section \ref{sec:numerics}.
We also briefly comment on existing results for numerical quadrature and provide several numerical examples illustrating our theoretical findings.
The appendix collects the rather lengthy and technical proofs of the main results.

%% ------------------------------------------------------------------
%% CONVERGENCE OF LAPLACE
%% ------------------------------------------------------------------
\section{Convergence of the Laplace Approximation}
\label{sec:Laplace}
We start with recalling the classical \emph{Laplace method} for the asymptotics of integrals.
\begin{theorem}[variant of {\cite[Section IX.5]{Wong2001}}]
\label{theo:laplace_method}
Set
\[
	J(n)
	\coloneqq
	\int_D f(x) \exp(-n \Phi(x)) \d x,
	\qquad
	n\in\bbN,
\]
where $D\subseteq \bbR^d$ is a possibly unbounded domain and let the following assumptions hold:
\begin{enumerate}
\item
The integral $J(n)$ converges absolutely for each $n\in\bbN$.

\item
There exists an $x_\star$ in the interior of $D$ such that for every $r > 0$ there holds
\[
	\delta_r \coloneqq \inf_{x \in B^c_r(x_\star)} \Phi(x) - \Phi(x_\star)  > 0,
\]
{where $B_r(x_\star) \coloneqq \{x \in \bbR^d\colon \|x-x_\star\| \leq r \}$ and $B^c_r(x_\star) \coloneqq \bbR^d \setminus B_r(x_\star)$.}

\item
\rev{In a neighborhood of $x_\star$ the function $f:D\to \mathbb R$ is $(2p+2)$ times continuously differentiable and $\Phi\colon \bbR^d \to \bbR$ is $(2p+3)$ times continuously differentiable for a $p\ge 0$}, i.e., and the Hessian $H_\star \coloneqq \nabla^2 \Phi(x_\star)$ is positive definite.
\end{enumerate}
\rev{
Then,  as $n\to \infty$, we have
 %$J(n)$ %has an asymptotic expansion
\[
	J(n) 
	=
	\e^{-n \Phi(x_\star)}\,
	n^{-d/2}\,
	\left(\sum_{k=0}^p c_k(f) n^{- \rev{k}}
	+ \mc O\left(n^{-p-1}\right) \right)
%	\qquad
%	\text{ as } n\to \infty,
\]
}
where $c_k(f) \in \bbR$ and, particularly, $c_0(f) = \rev{ \sqrt{\det(2\pi H^{-1}_\star)}\, f(x_\star)}$.
\end{theorem}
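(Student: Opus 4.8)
The plan is to follow the classical localization-and-rescaling strategy underlying Laplace's method, carrying the expansion out to order $p$. First I would \emph{localize}: fix a small radius $r>0$ and split $J(n) = \int_{B_r(x_\star)} + \int_{B_r^c(x_\star)}$. For the outer piece, write $\e^{-n\Phi} = \e^{-(n-1)\Phi}\,\e^{-\Phi}$ and bound $\e^{-(n-1)\Phi(x)} \le \e^{-(n-1)(\Phi(x_\star)+\delta_r)}$ on $B_r^c(x_\star)$ using the second assumption, so that
\[
	\left| \int_{B_r^c(x_\star)} f(x)\,\e^{-n\Phi(x)}\,\d x \right|
	\le
	\e^{-(n-1)(\Phi(x_\star)+\delta_r)}\, \int_D |f(x)|\,\e^{-\Phi(x)}\,\d x,
\]
where the remaining integral is finite by absolute convergence (first assumption). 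This is exponentially smaller than the target scale $\e^{-n\Phi(x_\star)} n^{-d/2}$ and hence negligible at every order.

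Next I would treat the inner piece by \emph{Taylor expansion and rescaling}. Since $x_\star$ is an interior minimizer and $\Phi$ is smooth, $\nabla\Phi(x_\star)=0$, so on $B_r(x_\star)$ we have $\Phi(x) = \Phi(x_\star) + \tfrac12(x-x_\star)^\top H_\star (x-x_\star) + R(x)$ with $R$ collecting the cubic and higher terms. Substituting $x = x_\star + y/\sqrt n$ (Jacobian $n^{-d/2}$) turns the exponent into
\[
	n\Phi(x_\star + y/\sqrt n)
	=
	n\Phi(x_\star) + \tfrac12 y^\top H_\star y + \psi_n(y),
	\qquad
	\psi_n(y) = \mc O\!\left(n^{-1/2}\right),
\]
because the $m$-th order Taylor term scales like $n^{1-m/2}$. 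Pulling out $\e^{-n\Phi(x_\star)} n^{-d/2}$, the remaining integral is $\int f(x_\star + y/\sqrt n)\,\e^{-\frac12 y^\top H_\star y}\,\e^{-\psi_n(y)}\,\d y$ over the rescaled ball $B_{r\sqrt n}(0)$.

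The core of the argument is to \emph{expand in powers of $n^{-1/2}$ and integrate against the Gaussian}. Expanding $f(x_\star + y/\sqrt n)$ by Taylor's theorem and $\e^{-\psi_n(y)}$ as a series yields an asymptotic series in $n^{-1/2}$ whose coefficients are polynomials in $y$ integrated against $\e^{-\frac12 y^\top H_\star y}$. Every half-integer power of $n^{-1/2}$ multiplies an \emph{odd} polynomial in $y$, whose Gaussian integral vanishes by symmetry; only the integer powers $n^{-k}$ survive, producing the coefficients $c_k(f)$. The leading coefficient is the pure Gaussian integral
\[
	c_0(f) = f(x_\star) \int_{\bbR^d} \e^{-\frac12 y^\top H_\star y}\,\d y = \sqrt{\det\!\left(2\pi H_\star^{-1}\right)}\, f(x_\star).
\]

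The main obstacle, and where the differentiability hypotheses are spent, is the \emph{uniform control of the remainder} to order $\mc O(n^{-p-1})$. One has to truncate both the Taylor expansion of $f$ and the series for $\e^{-\psi_n}$ at the right order, bound the resulting remainder by a polynomial-times-Gaussian that is integrable \emph{uniformly} over the growing domain $B_{r\sqrt n}(0)$, and separately show that extending the Gaussian integral from $B_{r\sqrt n}(0)$ back to all of $\bbR^d$ costs only an exponentially small error. The precise derivative counts ($f\in C^{2p+2}$, $\Phi\in C^{2p+3}$) are exactly what is needed so that the neglected terms in both expansions are of size $n^{-p-1}$ after integration; verifying this bookkeeping, rather than any single estimate, is the technical heart of the proof, and the details follow \cite[Section IX.5]{Wong2001}.
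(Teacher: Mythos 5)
Your outline is correct, but it takes a genuinely different route from the one the paper relies on. The paper never proves Theorem~\ref{theo:laplace_method} itself: it is stated as a variant of \cite[Section IX.5]{Wong2001}, and the proof referenced there (whose ingredients the paper recalls in the ``Preliminaries'' of Section~\ref{sec:numerics}) does not rescale the integration variable at all. Instead it applies Morse's lemma to construct a local diffeomorphism $h$ with $h(0)=x_\star$ and $\det(\nabla h(0))=1$ under which the phase becomes \emph{exactly} quadratic, $\Phi(h(u))=\Phi(x_\star)+\frac12\sum_i\lambda_i u_i^2$; the localized integral is then the single function $F(u)=f(h(u))\det(\nabla h(u))$ integrated against a Gaussian, and the expansion follows by term-by-term integration of the Taylor series of $F$, odd terms vanishing by parity. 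That route buys the closed-form coefficients $c_k(f)=\sum_{|\valpha|=k}\frac{\kappa_{2\valpha}}{(2\valpha)!}D^{2\valpha}F(0)$ of \eqref{equ:Laplace_coef_2}, which the paper uses heavily downstream (proofs of Lemma~\ref{lem:IS_prior} and Lemma~\ref{lem:Prior_QMC_norm}, and Appendix~\ref{sec:Var_mu_n}, where the identities $\nabla h(0)^\top H_\star\nabla h(0)=\Lambda$ and $\det(\nabla h(0))=1$ are exploited). Your route---direct Taylor expansion of $\Phi$ and $f$, substitution $x=x_\star+y/\sqrt n$, and parity cancellation of the half-integer powers---is more elementary, avoids Morse's lemma, and yields the same expansion and the same $c_0(f)$. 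Its price is that two interleaved expansions must be controlled simultaneously, and one point deserves care: your claim $\psi_n(y)=\mc O(n^{-1/2})$ holds only pointwise in $y$, not uniformly over $B_{r\sqrt n}(0)$ (for $\|y\|\sim r\sqrt n$ the cubic term is of order $n$), so the dominated-convergence step you invoke needs the standard preliminary reduction of $r$ so that, say, $|R(x)|\le\frac14\|x-x_\star\|_{H_\star}^2$ on $B_r(x_\star)$, giving $\e^{-\psi_n(y)}\le\e^{\frac14 y^\top H_\star y}$ and hence an integrable dominating function on the growing ball. You flag exactly this bookkeeping as the technical heart and defer it to Wong, which is fair; with that step made explicit your argument is a complete alternative proof of the expansion, though it does not by itself deliver the explicit Morse-coordinate coefficient formulas that the paper's later arguments depend on.
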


\begin{remark} \label{rem:laplace_method}
As stated in \cite[Section IX.5]{Wong2001} %the third assumption of Theorem \ref{theo:laplace_method} can be relaxed, if we are only interested in the single term expansion
the asymptotic
\[
	\lim_{n\to\infty} \frac{J(n)}{c_0(f)\, \exp(-n \Phi(x_\star))\, n^{-d/2} } = 1 
\]
with $c_0(f)$ is as above, already holds for $f\colon \bbR^d\to\bbR$ being continuous and $\Phi\colon \bbR^d\to \bbR$ being twice continuously differentiable in a neighborhood of $x_\star$ with positive definite $\nabla^2 \Phi(x_\star)$---given that the first two assumptions of Theorem \ref{theo:laplace_method} are also satified.
\end{remark}

Assume that $\Phi(x_\star) = 0$, then the above theorem and remark imply 
\[
	\left| \int_{\bbR^d} f(x)\  \exp(-n\Phi(x)) \ \d x - \int_{\bbR^d} f(x)\  \exp(- \frac n2 \|x-x_\star\|^2_{H_\star}) \ \d x\right| \in o(n^{-d/2})
\]
for continuous and integrable $f\colon \bbR^d\to\bbR$, where $\|\cdot \|_A=\|A^{1/2}\cdot\|$ for a symmetric positive definite matrix $A\in\bbR^{d\times d}$.
This is similar to the notion weak convergence (albeit with two different non-static measures).
If we additionally claim that $f(x_\star) > 0,$ then also
\begin{equation*}
	\lim_{n\to\infty} \frac{\int_{\bbR^d} f(x)\  \exp(-n\Phi(x)) \ \d x}{ \int_{\bbR^d} f(x)\  \exp(- \frac n2 \|x-x_\star\|^2_{H_\star}) \ \d x}
	=
	1
	\qquad
	\text{ as } n\to\infty,
\end{equation*}
which is sort of a relative weak convergence. 
In other words, the asymptotic behaviour of $\int f\  \e^{-n\Phi}\ \d x$, in particular, its convergence to zero, is the same as of the integral of $f$ w.r.t.~an unnormalized Gaussian density with mean in $x_\star$ and covariance $ (nH_\star)^{-1}$.

If we consider now probability measures $\mu_n$ as in \eqref{equ:post} but with $\Phi_n \equiv \Phi$ where $\Phi$ satisfies the assumptions of Theorem \ref{theo:laplace_method}, and if we suppose that $\mu_0$ possesses a continuous Lebesgue density $\pi_0\colon \bbR\to[0,\infty)$ with {$\pi_0(x_\star) > 0$}, then Theorem \ref{theo:laplace_method} and Remark \ref{rem:laplace_method} will imply for continuous and integrable $f\colon \bbR^d\to\bbR$ that
\begin{align*}
	\lim_{n\to \infty}
	\int_{\bbR^d} f(x) \mu_n(\d x)
	& =
	\lim_{n\to \infty}
	\frac{\int_{\bbR^d} f(x) \pi_0(x) \exp(-n \Phi(x))\ \d x}{\int_{\bbR^d} \pi_0(x) \exp(-n \Phi(x))\ \d x}\\
	& =
	\lim_{n\to \infty}
	\frac{c_0(f\pi_0)\, n^{-d/2}}{c_0(\pi_0)\, n^{-d/2}}
	=
	\frac{c_0(f\pi_0)}{c_0(\pi_0)}
	= f(x_\star).
\end{align*}
The same reasoning applies to the expectation of $f$ w.r.t.~a Gaussian measure $\mc N(x_\star, (nH_\star)^{-1})$ with unnormalized density $\exp(- \frac n2 \|x-x_\star\|^2_{H_\star})$.
Thus, we obtain the weak convergence of $\mu_n$ to $\mc N(x_\star, (nH_\star)^{-1})$, i.e., for any continuous and bounded $f\colon \bbR^d\to\bbR$ we have
\begin{equation}\label{equ:Laplace_weak}
	\lim_{n\to \infty} \left|\int_{\bbR^d} f(x) \ \mu_n(\d x) - \int_{\bbR^d} f(x) \ \mc N_{x_\star, (nH_\star)^{-1}}(\d x)\right|
	=
	0,
\end{equation}
where $\mc N_{x,C}$ is short for $\mc N(x,C)$.
In fact, for twice continously differentiable $f\colon \bbR^d \to \bbR$ we get by means of Theorem \ref{theo:laplace_method} the rate
\begin{equation}\label{equ:Laplace_weak_C2}
	\left|\int_{\bbR^d} f(x) \ \mu_n(\d x) - \int_{\bbR^d} f(x) \ \mc N_{x_\star, (nH_\star)^{-1}}(\d x)\right|
	\in
	\mc O(n^{-1}).
\end{equation}
Note that due to normalization we do not need to assume $\Phi(x_\star) = 0$ here.
Hence, this weak convergence suggests to use $\mc N_{x_\star, (nH_\star)^{-1}}$ as a Gaussian approximation to $\mu_n$.
In the next subsection we derive similar Gaussian approximation for the general case $\Phi_n \not\equiv \Phi$, {whereas subsection \ref{sec:conv} includes convergence results of the Laplace approximation in terms of the Hellinger distance.

\paragraph{Bayesian inference}
We present some context for the form of equation \eqref{equ:post} in the following. Integrals of the form \eqref{equ:post} arise naturally in the Bayesian setting for inverse problems with large amount of observational data or informative data. Note that the mathematical results for the Laplace approximation given in section \ref{sec:Laplace} are derived in a much more general setting and are not restricted to integrals w.r.t. the posterior in the Bayesian inverse framework. We refer to \cite{KaipioSomersalo2005,DashtiStuart2017} and the references therein for a detailed introduction to Bayesian inverse problems.

Consider a \rev{continuous forward response operator $\mathcal G\colon \bbR^d \to \mathbb R^K$}
%$\mathcal G\in \mathcal C(\bbR^d, \mathbb R^K)$, \rev{i.e. a continuous function from $\bbR^d$ to $\bbR^K$,}
 mapping the unknown parameters $x\in \bbR^d$ to the data space $\mathbb R^K$, where $K\in\mathbb N$ denotes the number of observations. 
We investigate the inverse problem of recovering unknown parameters $x\in \bbR^d$ from noisy observation $y\in \mathbb{R}^K$ given by
\begin{equation*}
y = \mathcal G(x)+\eta\,,
\end{equation*}
where $\eta \sim\mathcal N(0,\Gamma)$ is a Gaussian random variable with mean zero and covariance matrix $\Gamma$, 
which models the noise in the observations and in the model.

The Bayesian approach for this inverse problem of inferring $x$ from $y$ (which is ill-posed without further assumptions) works as follows: For fixed $y\in \mathbb{R}^K$ 
we introduce the least-squares functional (or negative loglikelihood in the language of statistics) $\Phi(\cdot;y):\mathbb R^d\to \mathbb R$ by
\begin{equation*}
\Phi(x ; y)=\frac{1}{2}|y-\mathcal G(x) |_{\Gamma}^2\,.
\end{equation*}
with $|\cdot |_\Gamma\coloneqq |\Gamma^{-\frac12}\cdot |$ denoting the weighted Euclidean norm in $\mathbb R^K$. 
The unknown parameter $x$ is modeled as a $\bbR^d$-valued random variable with prior distribution $\mu_0$ (independent of the observational noise $\eta$), which regularizes the problem and makes it well-posed by application of Bayes' theorem:
The pair $(x,y)$ is a jointly varying random variable on $\mathbb R^d \times \mathbb R^K$ and hence the solution to the Bayesian inverse problem is the 
conditional or posterior distribution $\mu$ of $x$ given the data $y$ 
%$\mathbb R^d$-valued random variable $x\mid y\sim\mu$ 
where the law $\mu$ is given by 
\begin{equation*}
\mu(\d x)=\frac{1}{Z}\exp(-\Phi(x ; y))\mu_0(\d x)
\end{equation*}
with the normalization constant $Z \coloneqq \int_{\mathbb R^d}\exp(-\Phi(x;y))\mu_0(\d x)$.
If we assume a decaying noise-level by introducing a scaled noise covariance $\Gamma_n = \frac 1n \Gamma$, the resulting noise model $\eta_n \sim N(0,\Gamma_n)$ yields an $n$-dependent log-likelihood term which results in posterior measures $\mu_n$ of the form \eqref{equ:post} with $\Phi_n(x) = \Phi(x ; y)$.
Similarly, an increasing number $n\in\bbN$ of data $y_1, \ldots, y_n \in \bbR^k$ resulting from $n$ observations of $\mathcal G(x)$ with independent noises $\eta_1,\ldots,\eta_n\sim N(0,\Gamma)$ yields posterior measures $\mu_n$ as in \eqref{equ:post} with $\Phi_n(x) = \frac 1n \sum_{j=1}^n \Phi(x ; y_j)$.

%% ------------------------------------------------------------------
%% LAPLACE APPROXIMATION
%% ------------------------------------------------------------------
\subsection{The Laplace Approximation}
Throughout the paper, we assume that the \rev{prior} measure $\mu_0$ is absolutely continuous w.r.t.~Lebesgue measure with density $\pi_0\colon\bbR^d\to[0,\infty)$, i.e.,
\begin{equation}\label{equ:nonGaussian_prior}
	\mu_0(\d x)= \pi_0(x) \d x, 
	\quad
	\text{ and we set }
	\quad
	\S_0 \coloneqq{\{x\in\bbR^d\colon \pi_0(x) > 0\} = \supp \mu_0}.
\end{equation}
Hence, also the measures $\mu_n$ in \eqref{equ:post} are absolutely continuous w.r.t.~Lebesgue measure, i.e., 
\begin{equation}\label{equ:post_2}
	\mu_n(\d x) = \frac 1{Z_n}\ \1_{\S_0}(x)\ \exp\left(-n I_n(x)\right) \d x
\end{equation}
where $I_n\colon \S_0 \to \bbR$ is given by
\begin{equation}\label{equ:I_n}
	I_n(x) \coloneqq \Phi_n(x) - \frac 1n\log \pi_0(x).
%	\qquad
%	x\in \S_0.
\end{equation}
In order to define the Laplace approximation of $\mu_n$ we need the following basic assumption.

\begin{assumption}\label{assum:LA_0}
There holds $\Phi_n, \pi_0 \in C^2(\S_0,\bbR)$\rev{, i.e., the mappings $\pi_0,\Phi_n\colon \S_0\to\bbR$ are twice continuously differentiable}.
Furthermore, $I_n$ has a unique minimizer $x_n \in \S_0$ satisfying
\[
	I_n(x_n) = 0,
	\quad
	\nabla I_n(x_n) = 0,
	\quad
	\nabla^2 I_n(x_n) > 0,
\]
where the latter denotes positive definiteness.
\end{assumption}

\begin{remark}\label{rem:In_zero}
Assuming that $\min_{x\in\S_0} I_n(x) = 0$ is just a particular (but helpful) normalization and in general not restrictive: If $\min_{x\in\S_0} I_n(x) = c >-\infty$, 
then we can simply set
\[
	\hat \Phi_n(x) \coloneqq \Phi_n(x) - c,
	\qquad
	\hat I_n(x) \coloneqq \hat \Phi_n(x) - \log \pi_0(x)
\]
for which we obtain
\[
	\mu_n(\d x) = \frac 1{\hat Z_n} \exp\left(-n \hat \Phi_n(x)\right) \mu_0(\d x),
	\quad 
	\hat Z_n = \int_{\bbR^d} \exp\left(-n \hat \Phi_n(x)\right) \mu_0(\d x),
%	\qquad
%	n\in\bbN,
\]
and $\min_{x\in\S_0} \hat I_n(x) = \min_x \hat \Phi_n(x) - \frac 1n \log \pi_0(x) = 0$.
\end{remark}

Given Assumption \ref{assum:LA_0} we define the \emph{Laplace approximation} of $\mu_n$ as the following Gaussian measure
\begin{equation}\label{equ:Laplace}
	\LAn \coloneqq \mc N (x_n, n^{-1} C_n),
	\qquad
	C^{-1}_n \coloneqq \nabla^2 I_n(x_n).
\end{equation}
Thus, we have
\begin{equation}\label{equ:Laplace2}
	\LAn(\d x)
	=
	%\frac{n^{d/2}}{\sqrt{(2\pi)^d \det(C_n)}}\ 
	\frac 1{\tilde Z_n}\exp\left(- \frac n2 \|x-x_n\|^2_{C_n^{-1}}\right) \ \d x,
	\qquad
	\tilde Z_n
	\coloneqq n^{-d/2} \sqrt{\det(2\pi C_n)},
\end{equation}
where 
%and, again, 
we can view
\begin{equation}\label{equ:Laplace3}
	\tilde I_n(x)
	\coloneqq 
	\frac 12 \|x-x_n\|^2_{C_n^{-1}}
	= \underbrace{I_n(x_n)}_{=0} + \underbrace{\nabla I_n(x_n)^\top}_{=0} (x-x_n) + \frac 12 \|x-x_n\|^2_{\nabla^2 I_n(x_n)}
\end{equation}
as the second-order Taylor approximation $\tilde I_n = T_2 I_n(x_n)$ of $I_n$ at $x_n$.
This point of view is crucial for analyzing the approximation
\begin{equation*}
	\mu_n \approx \LAn,
	\qquad
	\frac 1{Z_n}
	\exp(-n I_n(x))
	\approx
	\frac1{\tilde Z_n}
	\exp\left( - n \tilde I_n(x)\right).
%	\qquad
%	\tilde Z_n
%	= n^{-d/2} \sqrt{(2\pi)^d \det(C_n)}.
\end{equation*}

\paragraph{Notation and recurring equations.}
Before we continue, we collect recurring important definitions and where they can be found in Table \ref{tab:notation} and provide the following important equations cheat sheet
\begin{align*}
\mu_0(\mathrm d x) &= \pi_0(x) \mathrm d x&&\text{ (relative to Lebesgue measure)} \\
    \mu_n(\mathrm d x) &= Z_n^{-1}\exp(-n\Phi_n(x))\mu_0(\mathrm d x) &&\text{ (relative to $\mu_0$)}\\
        &= Z_n^{-1}\exp(-nI_n(x))\1_{S_0}(x)\mathrm d x &&\text{ (relative to Lebesgue measure)} \\
    \mathcal L_{\mu_n}(\mathrm d x) &= \tilde Z_n^{-1} \exp(-n T_2\Phi_n(x; x_n))\mu_0(\mathrm d x) &&\text{ (relative to $\mu_0$)}\\
        &= \tilde Z_n^{-1} \exp(-\frac{n}{2}\|x-x_n\|_{\nabla^2 I_n(x_n)}^2)\mathrm d x &&\text{ (relative to Lebesgue measure)} 
\end{align*}

\begin{table}[t]
\caption{Frequently used notation.}
\label{tab:notation}

\begin{tabular}{lll}
\hline\noalign{\smallskip}
Symbol &  Meaning & Reference\\
\noalign{\smallskip}\hline\noalign{\smallskip}
$\mu_0$ &  prior probability measure & \eqref{equ:nonGaussian_prior}\\
$\pi_0$ & Lebesgue density of the prior: $\mu_0(\mathrm d x) = \pi_0(x)\mathrm  d x$ 
& \eqref{equ:nonGaussian_prior}\\ 
$Z_0 = 1$ & hypothetically: Normalization constant of $\pi_0$&\\
\hline
$\mu_n$ &  posterior probability measure& \eqref{equ:post}, \eqref{equ:post_2}\\
$\Phi_n$ & (scaled) negative loglikelihood of posterior & \eqref{equ:post}\\ 
$I_n$ & (scaled) negative logdensity of posterior &  \eqref{equ:I_n}\\ 
$\pi_n(x)$ & \emph{unnormalized} Lebesgue density of the posterior & Lemma \ref{lem:conv_LA}\\
$Z_n$ & normalization constant of $\pi_n$ & \eqref{equ:post}\\
\hline
$\mathcal L_{\mu_n}$ & Laplace approximation of $\mu_n$ & \eqref{equ:Laplace2}\\
%$\tilde \Phi_n = T_2 \Phi(x_n)$ & negative loglikelihood of the Laplace approximation&\\
$\tilde I_n = T_2 I_n(x_n)$ & (scaled) negative logdensity of $\LAn$ &\eqref{equ:Laplace3} \\
$\tilde\pi_n$ &  \emph{unnormalized} Lebesgue density of $\LAn$ & Lemma \ref{lem:conv_LA}\\
$\tilde Z_n$ & normalization constant of $\tilde \pi_n$ & \eqref{equ:Laplace2}\\
\noalign{\smallskip}\hline
\end{tabular}
\end{table}

%% ------------------------------------------------------------------
%% CONVERGENCE ANALYSIS
%% ------------------------------------------------------------------
\subsection{Convergence in Hellinger Distance}
\label{sec:conv}
By a modification of Theorem \ref{theo:laplace_method} for integrals w.r.t.~a weight $\e^{-n\Phi_n(x)}$ we may show a corresponding version of \eqref{equ:Laplace_weak_C2}, i.e., for sufficiently smooth $f\in L^1_{\mu_0}(\bbR)$
\begin{equation}\label{equ:Laplace_weak_C2_LA}
	\left|\int_{\bbR^d} f(x)\  \mu_n(\d x) - \int_{\bbR^d} f(x)\  \LAn(\d x)\right|
	\in \mc O(n^{-1}).
\end{equation}
However, in this section we study a stronger notion of convergence of $\LAn$ to $\mu_n$, namely, w.r.t.~the \emph{total variation distance} $d_\text{TV}$ and the \emph{Hellinger distance} $d_\mathrm{H}$.
Given two probability measures $\mu$, $\nu$ on $\bbR^d$ and another probability measure $\rho$ dominating $\mu$ and $\nu$ the total variation distance of $\mu$ and $\nu$ is given by
\[
	d_\text{TV}(\mu,\nu)
	\coloneqq 
	\sup_{A \in \mc B(\bbR^d)}
	\left|\mu(A) - \nu(A)\right|
	=
	\frac 12 \int_{\bbR^d} \left|\frac{\d \mu}{\d \rho}(x) - \frac{\d \tilde \nu}{\d \rho}(x)\right| \rho(\d x)
\]	
and their Hellinger distance by
\[
	d^2_\text{H}(\mu,\nu)
	\coloneqq 
	\int_{\bbR^d} \left|\sqrt{\frac{\d \mu}{\d \rho}(x)} - \sqrt{\frac{\d \nu}{\d \rho}(x)}\right|^2 \rho(\d x).
\]
It holds true that
\[
	\frac{d^2_\text{H}(\mu,\nu)}2
	\leq
	d_\text{TV}(\mu,\nu)
	\leq
	d_\mathrm{H}(\mu,\nu),
\]
see \cite[Equation (8)]{GibbsSu2002}.
Note that, $d_\text{TV}(\mu_n,\LAn)\to 0$ implies that $|\int f  \d \mu_n - \int f \d \LAn| \to 0$ for any bounded and continuous $f\colon \bbR^d\to\bbR$.
In order to establish our convergence result, we require almost the same assumptions as in Theorem \ref{theo:laplace_method}, but now uniformly w.r.t.~$n$:

\begin{assumption}\label{assum:LA_Conv}
There holds $\Phi_n, \pi_0 \in C^3(\S_0,\bbR)$ for all $n$ and
\begin{enumerate}
\item \label{assum:LA_Conv:conv}
there exist the limits
\begin{equation}\label{equ:x_star}
	x_\star \coloneqq \lim_{n\to \infty} x_n
	\qquad
	H_\star \coloneqq \lim_{n\to \infty} H_n,
	\qquad
	H_n \coloneqq \nabla^2\Phi_n(x_n)
\end{equation}
in $\bbR^d$ and $\bbR^{d \times d}$, respectively, with $H_\star$ being positive definite and $x_\star$ belonging to the interior of $\S_0$.

\item
For each $r > 0$ there exists an $n_r\in\bbN$, $\delta_r>0$ and $K_r < \infty$ such that
\[
	\delta_r \leq \inf_{x \notin B_r(x_n)\cap \S_0} I_n(x) \qquad \forall n\geq n_r
\]
as well as 
\[
	\max_{x\in B_r(0) \cap \S_0} \|\nabla^3 \log \pi_0(x)\| \leq K_r,
	\max_{x\in B_r(0)\cap \S_0} \|\nabla^3 \Phi_n(x)\| \leq K_r \quad \forall n\geq n_r.
\]
\rev{
\item
There exists a uniformly bounding function $q\colon \SP_0\to[0,\infty)$ with
\[
	\exp(-n I_n(x)) \leq q(x),
	\qquad 
	\forall x \in \SP_0\ \forall n\geq n_0
\]
for an $n_0\in\bbN$ such that $q^{1-\epsilon}$ is integrable, i.e., $\int_{\SP_0} q^{1-\epsilon}(x)\ \d x < \infty$, for an $\epsilon\in(0,1)$. 
}
\end{enumerate}
\end{assumption}

The only additional assumptions in comparison to the classical convergence theorem of the Laplace method are about the third derivatives of $\pi_0$ and $\Phi_n$ and the convergence of $x_n \to x_\star$.
We remark that \eqref{equ:x_star} implies
\[
	\lim_{n\to \infty} C^{-1}_n 
	=
	\lim_{n\to \infty} 
	 \nabla^2\left( \Phi_n(x_n) - \frac 1n \log \pi_0(x_n)\right)
	= H_\star
\]
and, thus, also $\lim_{n\to \infty} C_n 	= H^{-1}_\star$.

\rev{The uniform lower bound on $I_n$ outside a ball around $x_n$ as well as the integrable majorant of the unnormalized densities $\e^{-nI_n}\leq 1$ of $\mu_n$ can be understood as uniform versions of the first and second assumption of Theorem \ref{theo:laplace_method}. 
The third item of Assumption \ref{assum:LA_Conv} implies the uniform integrability of the $\e^{-nI_n}\leq 1$ and is obviously satisfied for bounded supports $\SP_0$.
However, in the unbounded case it seems to be crucial\footnote{We provide a counterexample of nonconcentrating measures $\mu_n$ in the case, when the third issue of Assumptions \ref{assum:LA_Conv} does not hold, in Appendix C. We also state in Appendix C.1 sufficient conditions on $\Phi_n$ and $\pi_0$ such that the third issue of Assumptions \ref{assum:LA_Conv} is satisfied.}
for an increasing concentration of the $\mu_n$.}

We start our analysis with the following helpful lemma.

\begin{lemma}\label{lem:conv_LA}
Let Assumption \ref{assum:LA_0} and \ref{assum:LA_Conv} be satisfied and let $\pi_n, \tilde \pi_n \colon \bbR^d\to [0,\infty)$ denote the unnormalized Lebesgue densities of $\mu_n$ and $\LAn$, respectively, given by
\[
	\pi_n(x) \coloneqq \begin{cases} \exp\left(- n \Phi_n(x) \right) \pi_0(x), & x \in \S_0,\\ 0, & \text{otherwise}, \end{cases}
\]
and
\[
	\tilde \pi_n(x) \coloneqq \exp\left( -\frac n2 \|x-x_n\|^2_{C^{-1}_n}\right),
	\qquad
	x\in\bbR^d.	
\]
Then, for any $p\in\bbN$
\[
		\int_{\bbR^d}
		\left| \left( \frac{\pi_n(x)}{\tilde \pi_n(x)}\right)^{1/p} - 1\right|^p \LAn(\d x) 
		\in \mc O(n^{-p/2}).
\]
\end{lemma}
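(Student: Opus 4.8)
The plan is to express the integrand entirely through the third-order Taylor remainder of $I_n$ at $x_n$ and then to treat the bulk and the tails of $\LAn$ separately. Since $\pi_n(x) = \e^{-nI_n(x)}$ on $\S_0$ and $\tilde\pi_n(x) = \e^{-n\tilde I_n(x)}$ with $\tilde I_n = T_2 I_n(x_n)$ by \eqref{equ:Laplace3}, on $\S_0$ one has
\[
\left(\frac{\pi_n(x)}{\tilde\pi_n(x)}\right)^{1/p} = \exp\left(-\tfrac{n}{p} R_n(x)\right), \qquad R_n(x) := I_n(x) - \tilde I_n(x),
\]
so the integrand equals $|\exp(-\tfrac{n}{p}R_n)-1|^p$ on $\S_0$ and equals $1$ off $\S_0$ (where $\pi_n=0$). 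First I would substitute $x = x_n + n^{-1/2}C_n^{1/2}z$, which pushes $\LAn = \mc N(x_n,n^{-1}C_n)$ forward to the standard Gaussian $\gamma := \mc N(0,I_d)$ in $z$; the integral becomes $\int_{\bbR^d}|\exp(-\tfrac{n}{p}R_n(x(z)))-1|^p\,\gamma(\d z)$.

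The key local ingredient is a cubic Taylor bound. Since $\Phi_n,\pi_0\in C^3$ and, by the second part of Assumption \ref{assum:LA_Conv}, their third derivatives are uniformly in $n$ bounded on a ball $U$ around $x_\star$, while $x_n\to x_\star$, there is a $K<\infty$ with $\sup_n\sup_{x\in U}\|\nabla^3 I_n(x)\|\le K$; Taylor's theorem with integral remainder then gives $|R_n(x)|\le\tfrac{K}{6}\|x-x_n\|^3$ for $x\in U$. With $\|x-x_n\| = n^{-1/2}\|C_n^{1/2}z\|\le c\,n^{-1/2}\|z\|$, where $c := \sup_n\|C_n^{1/2}\|<\infty$ because $C_n\to H_\star^{-1}$, this yields $|nR_n(x(z))|\le\tfrac{Kc^3}{6}n^{-1/2}\|z\|^3$ on the bulk region $\mc A_n := \{\|z\|\le\delta\sqrt n\}$, provided $\delta$ is small enough that $\mc A_n$ maps into $U$. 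On $\mc A_n$ one also has $n^{-1/2}\|z\|^3\le\delta\|z\|^2$, so for $\delta$ small $|nR_n|\le\tfrac{Kc^3\delta}{6}\|z\|^2$. Combining this with $|\e^{-t}-1|\le|t|\e^{|t|}$ bounds the bulk contribution by
\[
\left(\frac{Kc^3}{6p}\right)^p n^{-p/2}\int_{\bbR^d}\|z\|^{3p}\,\e^{(Kc^3\delta/6)\|z\|^2}\,\gamma(\d z),
\]
and for $\delta$ small the exponential is dominated by the Gaussian weight, leaving a finite constant times $n^{-p/2}$.

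It remains to bound the tail $\int_{\mc A_n^c}$, and this is the main obstacle, since there both the Taylor control and the naive exponential bound fail. Here I would use $|g_n-1|^p\le 2^{p-1}(g_n^p+1)$ with $g_n := (\pi_n/\tilde\pi_n)^{1/p}$. The term $\int_{\mc A_n^c}\gamma(\d z)$ is a Gaussian tail of order $\e^{-c\delta^2 n}$, hence $o(n^{-p/2})$; note that for $\delta$ small the event $x(z)\notin\S_0$ lies in $\mc A_n^c$ (as $x_\star\in\mathrm{int}\,\S_0$), so the off-support part is absorbed here. For $\int_{\mc A_n^c}g_n^p\,\gamma(\d z)$, undoing the substitution gives $\tilde Z_n^{-1}\int_{\{x\in\S_0:\,\|C_n^{-1/2}(x-x_n)\|>\delta\}}\pi_n(x)\,\d x$; this region forces $\|x-x_n\|>r_0$ for a fixed $r_0>0$, where the second part of Assumption \ref{assum:LA_Conv} gives $I_n\ge\delta_{r_0}$. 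Writing $\pi_n = \e^{-nI_n} = \e^{-n\epsilon I_n}(\e^{-nI_n})^{1-\epsilon}\le\e^{-n\epsilon\delta_{r_0}}\,q^{1-\epsilon}$ and invoking the integrable majorant $q$ from the third part of Assumption \ref{assum:LA_Conv} bounds the integral by $\e^{-n\epsilon\delta_{r_0}}\int_{\S_0}q^{1-\epsilon}$. Since $\tilde Z_n^{-1} = n^{d/2}/\sqrt{\det(2\pi C_n)} = \mc O(n^{d/2})$ as $C_n\to H_\star^{-1}$ is nonsingular, the whole tail is $\mc O(n^{d/2}\e^{-cn}) = o(n^{-p/2})$. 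Adding the bulk and tail estimates gives the claimed $\mc O(n^{-p/2})$. The delicate points are the simultaneous choice of $\delta$ (small enough for both the Taylor neighborhood and the Gaussian-dominated exponential in the bulk) and the exponential-versus-polynomial bookkeeping in the tail, where the growth of $\tilde Z_n^{-1}$ must be beaten by the uniform concentration encoded in $\delta_{r_0}$ and the majorant $q$.
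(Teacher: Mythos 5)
Your proof is correct and follows essentially the same route as the paper's: a cubic Taylor bound on $R_n = I_n - \tilde I_n$ using the uniform third-derivative bounds gives the $\mc O(n^{-p/2})$ bulk contribution, while Gaussian concentration together with the uniform lower bound $I_n \ge \delta_{r_0}$ away from $x_n$ and the integrable majorant $q^{1-\epsilon}$ make the tail (including the off-support set $\S_0^c$) exponentially small even against the factor $\tilde Z_n^{-1} \in \mc O(n^{d/2})$. The differences are only bookkeeping: you standardize to $\mc N(0,I_d)$ and split via $|g_n-1|^p \le 2^{p-1}(g_n^p+1)$, whereas the paper keeps $x$-coordinates with an auxiliary Gaussian $\nu_n$, splits the tail by the sign of $R_n$, and uses dominated convergence where you use the direct bound $\e^{-nI_n} \le \e^{-n\epsilon\delta_{r_0}}\,q^{1-\epsilon}$.
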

\begin{proof}
We define the remainder term 
\begin{align*}
	R_n(x) & \coloneqq I_n(x) - \tilde I_n(x)
	= I_n(x) - \frac 12 \|x-x_n\|^2_{C^{-1}_n},
\end{align*}
i.e., for $x\in \S_0$ we have $\frac{\pi_n(x)}{\tilde \pi_n(x)} = \exp(-nR_n(x))$. 
Moreover, note that for $x\in\S_0^c$ there holds $\pi_n(x) = 0$.
Thus, we obtain
\begin{align*}
	\int_{\bbR^d} \left| \left( \frac{\pi_n(x)}{\tilde \pi_n(x)}\right)^{1/p} - 1\right|^p \LAn(\d x) 
	& = \int_{\S_0^c} 1^p\ \LAn(\d x) \\
	& \qquad + \int_{\S_0} \left| \e^{-nR_n(x)/p} - 1\right|^p\ \LAn(\d x)\\
	& = J_0(n) + J_1(n) + J_2(n)
\end{align*}
where we define for a given radius $r>0$
\begin{align*}
	J_0(n) & \coloneqq\LAn(\S_0^c),\\
	J_1(n) & \coloneqq\int_{B_r(x_n) \cap \S_0} \left|\e^{-nR_n(x)/p} -1 \right|^{p}\ \LAn(\d x),\\
	J_2(n) & \coloneqq \int_{B^c_r(x_n)\cap \S_0} \left|\e^{-n R_n(x)/p} -1 \right|^p\ \LAn(\d x).
\end{align*}
\rev{In Appendix \ref{sec:proof_lem} we prove that
\[
	J_0(n) \in \mc O(\e^{-c_r n}),
	\qquad
	J_1(n) \in \mc O(n^{-p/2}),
	\qquad
	J_2(n) \in \mc O(\e^{- n c_{r,\epsilon}} n^{d/2} ),
\]
for $c_r, c_{r,\epsilon} >0$,} which then yields the statement.
\end{proof}

Lemma \ref{lem:conv_LA} provides the basis for our main convergence theorem.

\begin{theorem}\label{theo:conv_H}
Let the assumptions of Lemma \ref{lem:conv_LA} be satisfied.
Then, there holds
\[
		d_\mathrm{H}(\mu_n, \LAn) \in \mc O(n^{-1/2}).
\]
\end{theorem}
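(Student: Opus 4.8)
\section*{Proof proposal}

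The plan is to apply the definition of the Hellinger distance with the Laplace approximation $\LAn$ itself chosen as the dominating reference measure $\rho$, and then to reduce everything to the unnormalized-density estimates already provided by Lemma \ref{lem:conv_LA}. Since $\LAn(\d x) = \tilde\pi_n(x)/\tilde Z_n\,\d x$ dominates $\mu_n$, the Radon--Nikodym derivative is $\frac{\d\mu_n}{\d\LAn}(x) = \frac{\tilde Z_n}{Z_n}\frac{\pi_n(x)}{\tilde\pi_n(x)}$, so that, writing $\kappa_n \coloneqq \sqrt{\tilde Z_n/Z_n}$ and $g_n \coloneqq \sqrt{\pi_n/\tilde\pi_n}$ (with $g_n = 0$ on $\S_0^c$, which is harmless as $\tilde\pi_n>0$ everywhere), one obtains
\[
	d_\mathrm{H}^2(\mu_n,\LAn) = \int_{\bbR^d}\left|\kappa_n\,g_n(x) - 1\right|^2\LAn(\d x) = \norm{\kappa_n g_n - 1}{L^2_{\LAn}}^2 .
\]
The whole proof then reduces to estimating this $L^2(\LAn)$-norm.

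The first main step is to control the normalization ratio $\kappa_n$. The key observation is the identity
\[
	\frac{Z_n}{\tilde Z_n} = \frac1{\tilde Z_n}\int_{\bbR^d}\pi_n(x)\,\d x = \int_{\bbR^d}\frac{\pi_n(x)}{\tilde\pi_n(x)}\,\LAn(\d x),
\]
which recasts the ratio of normalization constants as an $\LAn$-integral of the unnormalized density quotient. Applying Lemma \ref{lem:conv_LA} with $p=1$ then gives
\[
	\left|\frac{Z_n}{\tilde Z_n} - 1\right| \le \int_{\bbR^d}\left|\frac{\pi_n(x)}{\tilde\pi_n(x)} - 1\right|\LAn(\d x)\in\mc O(n^{-1/2}),
\]
whence $Z_n/\tilde Z_n = 1 + \mc O(n^{-1/2})$ and, by a first-order expansion of $t\mapsto t^{-1/2}$ near $t=1$, also $\kappa_n = 1 + \mc O(n^{-1/2})$; in particular the sequence $(\kappa_n)_n$ is bounded.

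The second main step is to split the integrand as $\kappa_n g_n - 1 = \kappa_n(g_n - 1) + (\kappa_n - 1)$ and apply the triangle inequality in $L^2(\LAn)$:
\[
	\norm{\kappa_n g_n - 1}{L^2_{\LAn}} \le \kappa_n\,\norm{g_n - 1}{L^2_{\LAn}} + |\kappa_n - 1|\,\norm{1}{L^2_{\LAn}} .
\]
Since $\LAn$ is a probability measure, $\norm{1}{L^2_{\LAn}} = 1$, and Lemma \ref{lem:conv_LA} with $p=2$ gives $\norm{g_n - 1}{L^2_{\LAn}}^2 = \int_{\bbR^d}\bigl|(\pi_n/\tilde\pi_n)^{1/2} - 1\bigr|^2\LAn(\d x)\in\mc O(n^{-1})$, i.e. $\norm{g_n - 1}{L^2_{\LAn}}\in\mc O(n^{-1/2})$. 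Combining this with the boundedness of $\kappa_n$ and the bound $|\kappa_n-1|\in\mc O(n^{-1/2})$ from the first step yields $d_\mathrm{H}(\mu_n,\LAn) = \norm{\kappa_n g_n - 1}{L^2_{\LAn}}\in\mc O(n^{-1/2})$, as claimed.

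I do not expect a genuine obstacle here, since Lemma \ref{lem:conv_LA} already absorbs all the analytic work (the Taylor-remainder control of $R_n$ and the tail estimates on $J_0,J_1,J_2$). The only point demanding care is the bookkeeping of normalization constants: the Lemma controls the \emph{unnormalized} quotient $\pi_n/\tilde\pi_n$, whereas the Hellinger distance is taken between the \emph{normalized} probability measures, so the factor $\kappa_n$ must not be dropped. Recognizing that $Z_n/\tilde Z_n$ is itself an $\LAn$-integral of the same quotient is what closes this gap cleanly and makes the $p=1$ and $p=2$ instances of the Lemma do exactly the work they are designed for.
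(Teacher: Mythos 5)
Your proof is correct and follows essentially the same route as the paper's: both arguments reduce the Hellinger distance entirely to Lemma \ref{lem:conv_LA}, using the $p=2$ case for the unnormalized density quotient and the $p=1$ case to show $Z_n/\tilde Z_n = 1 + \mc O(n^{-1/2})$, then combine the two contributions. The only difference is cosmetic packaging: you split $\kappa_n g_n - 1 = \kappa_n(g_n-1) + (\kappa_n - 1)$ and use the triangle inequality in $L^2_{\LAn}$, whereas the paper splits the squared distance with $(a+b)^2 \le 2a^2 + 2b^2$ and bounds $|Z_n - \tilde Z_n| \le \int |\pi_n - \tilde\pi_n|\,\d x$, which is the same bookkeeping.
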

\begin{proof}
We start with
\begin{align*}
	d^2_\text{H}(\mu_n, \LAn)
	& = \int_{\bbR^d} \left[\frac{\sqrt{\pi_n(x)}}{\sqrt{Z_n}} - \frac{\sqrt{\tilde \pi_n(x)}}{\sqrt{\tilde Z_n}} \right]^2\ \d x\\
	& \leq \frac 2{\tilde Z_n} \int_{\bbR^d} \left[\sqrt{\pi_n(x)} - \sqrt{\tilde \pi_n(x)} \right]^2 \d x + 
	2 \left(\frac{1}{\sqrt{Z_n}} - \frac{1}{\sqrt{\tilde Z_n}}\right)^2  Z_n\\%\int_{\bbR^d} \pi_n(x) \d x\\
	& = 2 \int_{\bbR^d} \left[ \sqrt{\frac{\pi_n(x)}{\tilde \pi_n(x)}} - 1\right]^2\ \LAn(\d x) + 
	2 \left(\frac{1}{\sqrt{Z_n}} - \frac{1}{\sqrt{\tilde Z_n}}\right)^2  Z_n.
\end{align*}
For the first term there holds due to Lemma \ref{lem:conv_LA}
\begin{align*}
	\int_{\bbR^d} \left[ \sqrt{\frac{\pi_n(x)}{\tilde \pi_n(x)}} - 1\right]^2\ \LAn(\d x) 
	\in \mc O(n^{-1}).
\end{align*}
For the second term on the right-hand side we obtain
\begin{align*}
	2\left(\frac{1}{\sqrt{Z_n}} - \frac{1}{\sqrt{\tilde Z_n}}\right)^2 Z_n 
	& = \frac{2}{\tilde Z_n} \left(\sqrt{Z_n} - \sqrt{\tilde Z_n}\right)^2
	= \frac 2{\tilde Z_n} \left(\frac{Z_n - \tilde Z_n}{\sqrt{Z_n} + \sqrt{\tilde Z_n}}\right)^2\\
	&\leq 2\frac {\left|Z_n - \tilde Z_n\right|^2}{\tilde Z^2_n}.
\end{align*}
Furthermore, due to Lemma \ref{lem:conv_LA} there exists a $c<\infty$ such that
%Now, since with $R_n(x) = I_n(x) - \tilde I_n(x)$ there holds
\begin{align*}
	|Z_n - \tilde Z_n| & \leq \int_{\bbR^d} \left| \pi_n(x) - \tilde \pi_n(x)\right|\ \d x 
	= \tilde Z_n \int_{\bbR^d} \left| \frac{\pi_n(x)}{\tilde \pi_n(x)} - 1 \right|\ \LAn(\d x)\\
	& \leq c n^{-1/2} \tilde Z_n.
\end{align*}
This yields
\[
	2\left(\frac{1}{\sqrt{Z_n}} - \frac{1}{\sqrt{\tilde Z_n}}\right)^2 Z_n
	\leq 
	2\frac {\left|Z_n - \tilde Z_n\right|^2}{\tilde Z^2_n}
	\leq
	2c^2 n^{-1}
	\in \mc O(n^{-1}),
\]
which concludes the proof.
\end{proof}

%% ------------------------------------------------------------------
%% CONVERGENCE OF OTHER GAUSSIANS
%% ------------------------------------------------------------------
\paragraph{Convergence of other Gaussian approximations.}
Let us consider now a sequence of arbitrary Gaussian approximations $\tilde \mu_n = \mc N(a_n, \frac 1n B_n)$ to the measures $\mu_n$ in \eqref{equ:post}. 
Under which conditions on $a_n \in\bbR^d$ and $B_n \in \bbR^{d\times d}$ do we still obtain the convergence $d_\text{H}(\mu_n, \tilde \mu_n) \to 0$? 
Of course, $a_n\to x_\star$ seems to be necessary but how about the covariances $B_n$?
Due to the particular scaling of $1/n$ appearing in the covariance of $\LAn$, one might suppose that for example $\tilde \mu_n = \mc N(x_n, \frac 1n I_d)$ or $\tilde \mu_n = \mc N(x_n, \frac 1n B)$ with an arbitrary symmetric and positive definite (spd) $B \in \bbR^{d\times d}$ should converge to $\mu_n$ as $n\to\infty$.
However, since
\[
	\left| d_\text{H}(\mu_n, \LAn) - d_\text{H}(\LAn, \tilde \mu_n)\right|
	\leq  
	d_\text{H}(\mu_n, \tilde \mu_n)
	\leq
	d_\text{H}(\mu_n, \LAn) + d_\text{H}(\LAn, \tilde \mu_n)
\]
and $d_\text{H}(\mu_n, \LAn) \to 0$, we have
\begin{equation}\label{equ:Conv_mun_2}
	d_\text{H}(\mu_n, \tilde \mu_n) \to 0
	\quad
	\text{ iff }
	\quad 	
	d_\text{H}(\LAn, \tilde \mu_n) \to 0.
\end{equation}
The following result shows that, in general, $\tilde \mu_n = \mc N(x_n, \frac 1n I_d)$ or $\tilde \mu_n = \mc N(x_n, \frac 1n B)$ do not converge to $\mu_n$. 

\begin{theorem}\label{theo:conv_H_star}
Let the assumptions of Lemma \ref{lem:conv_LA} be satisfied.
\begin{enumerate}
\item
For $\tilde \mu_n \coloneqq \mc N(x_n, \frac 1n B_n)$, $n\in\bbN$, with spd $B_n$, we have that 
\begin{equation}\label{equ:Bn_cond}
		\lim_{n\to \infty} d_\mathrm{H}(\mu_n, \tilde \mu_n) = 0
		\quad
		\text{ iff }
		\quad
		\lim_{n\to \infty} \det\left(\frac 12 (H^{1/2}_\star B_n^{1/2} + H_\star^{-1/2} B_n^{-1/2})\right) = 1.	
\end{equation}
If so and if $\|C_n - B_n\| \in \mc O(n^{-1})$, then we even have $d_\mathrm{H}(\mu_n, \tilde \mu_n) \in \mc O(n^{-1/2})$.

\item
For $\tilde \mu_n \coloneqq \mc N(a_n, \frac 1n B_n)$, $n\in\bbN$, with $B_n$ satisfying \eqref{equ:Bn_cond} and $\|x_n - a_n\| \in \mc O(n^{-1})$, we have that $d_\mathrm{H}(\mu_n, {\tilde \mu_n}) \in \mc O(n^{-1/2})$.
\end{enumerate}
\end{theorem}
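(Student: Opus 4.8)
The plan is to use the triangle-inequality equivalence \eqref{equ:Conv_mun_2} together with Theorem \ref{theo:conv_H} to replace $\mu_n$ by its Laplace approximation, so that the whole problem reduces to the Hellinger distance between the two \emph{Gaussians} $\LAn=\mc N(x_n,\tfrac1n C_n)$ and $\tilde\mu_n$, which is available in closed form. For $\mc N(m_1,\Sigma_1)$ and $\mc N(m_2,\Sigma_2)$ on $\bbR^d$ the Hellinger distance normalised as in the excerpt is
\begin{equation*}
	d^2_\mathrm{H}\big(\mc N(m_1,\Sigma_1),\mc N(m_2,\Sigma_2)\big)=2\left(1-\frac{\det(\Sigma_1)^{1/4}\det(\Sigma_2)^{1/4}}{\det(\bar\Sigma)^{1/2}}\,\exp\!\Big(-\tfrac18(m_1-m_2)^\top\bar\Sigma^{-1}(m_1-m_2)\Big)\right),\quad\bar\Sigma\coloneqq\tfrac{\Sigma_1+\Sigma_2}{2}.
\end{equation*}
It is convenient to abbreviate, for symmetric positive definite $A,B\in\bbR^{d\times d}$,
\begin{equation*}
	g(A,B)\coloneqq\frac{\det\!\big(\tfrac{A+B}{2}\big)}{\det(A)^{1/2}\det(B)^{1/2}}=\prod_{i=1}^d\frac{1+\mu_i}{2\sqrt{\mu_i}}\ \ge\ 1,
\end{equation*}
where $\mu_1,\dots,\mu_d>0$ are the eigenvalues of $A^{-1/2}BA^{-1/2}$; the product formula and the bound $g\ge1$ (with equality iff $A=B$) are the scalar inequality $1+\mu\ge2\sqrt{\mu}$ applied eigenvalue-wise.

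For the first claim the two measures share the mean $x_n$, so the exponential factor is $1$ and the common factor $\tfrac1n$ cancels between numerator and denominator, leaving the $n$-\emph{independent} quantity $d^2_\mathrm{H}(\LAn,\tilde\mu_n)=2\big(1-g(C_n,B_n)^{-1/2}\big)$. By \eqref{equ:Conv_mun_2} the task is therefore to decide when $g(C_n,B_n)\to1$. Since every factor of $g$ is $\ge1$, the convergence $g(C_n,B_n)\to1$ forces each factor, hence each eigenvalue, to $1$, i.e.\ $C_n^{-1/2}B_nC_n^{-1/2}\to I$; recalling $C_n\to H_\star^{-1}$ (spd), noted right after \eqref{equ:x_star}, this is equivalent to $B_n\to H_\star^{-1}$. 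It remains to identify the determinant in \eqref{equ:Bn_cond} with the same object: expanding
\begin{equation*}
	\tfrac12\big(H_\star^{1/2}B_n^{1/2}+H_\star^{-1/2}B_n^{-1/2}\big)=\tfrac12\,H_\star^{-1/2}\big(H_\star B_n+I\big)B_n^{-1/2}
\end{equation*}
and taking determinants yields $\det\!\big(\tfrac12(H_\star^{1/2}B_n^{1/2}+H_\star^{-1/2}B_n^{-1/2})\big)=g(H_\star^{-1},B_n)$, whose eigenvalues are exactly those of $H_\star^{1/2}B_nH_\star^{1/2}$. Thus \eqref{equ:Bn_cond} also reads $g(H_\star^{-1},B_n)\to1$, which by the same squeezing argument is equivalent to $B_n\to H_\star^{-1}$, and the equivalence in \eqref{equ:Bn_cond} follows. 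I expect this matching of the $H_\star$-form of the hypothesis with the $C_n$-form produced by the Hellinger computation---both being encodings of $B_n\to H_\star^{-1}$---to be the only genuinely delicate bookkeeping.

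For the rate, assume in addition $\|C_n-B_n\|\in\mc O(n^{-1})$ and set $E_n\coloneqq B_n-C_n$, $M_n(t)\coloneqq C_n+tE_n$, $\phi(t)\coloneqq\log\det M_n(t)$. Writing $\log g(C_n,B_n)=\tfrac14\phi(0)+\tfrac14\phi(1)-\tfrac12\phi(\tfrac12)$ and Taylor-expanding $\phi(0),\phi(1)$ about $t=\tfrac12$, the first-order terms cancel and one is left with a multiple of $\phi''$ at intermediate points, where $\phi''(t)=-\tr\!\big((M_n(t)^{-1}E_n)^2\big)=\mc O(\|E_n\|^2)=\mc O(n^{-2})$. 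Hence $\log g(C_n,B_n)\in\mc O(n^{-2})$, so $d^2_\mathrm{H}(\LAn,\tilde\mu_n)\in\mc O(n^{-2})$, and combining with Theorem \ref{theo:conv_H} through $d_\mathrm{H}(\mu_n,\tilde\mu_n)\le d_\mathrm{H}(\mu_n,\LAn)+d_\mathrm{H}(\LAn,\tilde\mu_n)$ gives $d_\mathrm{H}(\mu_n,\tilde\mu_n)\in\mc O(n^{-1/2})$.

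For the second claim I would separate the covariance perturbation from the mean perturbation. Invoking the quantitative conclusion of the first part (which presupposes $\|C_n-B_n\|\in\mc O(n^{-1})$), the measure $\mc N(x_n,\tfrac1n B_n)$ already lies within $\mc O(n^{-1/2})$ of $\mu_n$. It then suffices to estimate the shift of the mean: applying the closed form once more with equal covariances $\Sigma_1=\Sigma_2=\tfrac1n B_n$ kills the determinant factor and leaves
\begin{equation*}
	d^2_\mathrm{H}\big(\mc N(x_n,\tfrac1n B_n),\mc N(a_n,\tfrac1n B_n)\big)=2\Big(1-\exp\!\big(-\tfrac n8\,(x_n-a_n)^\top B_n^{-1}(x_n-a_n)\big)\Big).
\end{equation*}
Since $B_n^{-1}\to H_\star$ is bounded and $\|x_n-a_n\|\in\mc O(n^{-1})$, the exponent is $\mc O(n\cdot n^{-2})=\mc O(n^{-1})$, so this distance is $\mc O(n^{-1/2})$. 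A final triangle inequality $d_\mathrm{H}(\mu_n,\tilde\mu_n)\le d_\mathrm{H}(\mu_n,\mc N(x_n,\tfrac1n B_n))+d_\mathrm{H}(\mc N(x_n,\tfrac1n B_n),\tilde\mu_n)$ then yields $d_\mathrm{H}(\mu_n,\tilde\mu_n)\in\mc O(n^{-1/2})$.
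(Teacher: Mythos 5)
Your proof is correct and rests on the same skeleton as the paper's: reduce everything to the Gaussian pair $(\LAn,\tilde\mu_n)$ via \eqref{equ:Conv_mun_2} and Theorem \ref{theo:conv_H}, evaluate the Hellinger distance between Gaussians in closed form, and treat the mean shift in item 2 by an extra triangle inequality. Where you differ is in the execution of two sub-steps, and in both places your version is the more careful one. For the equivalence in \eqref{equ:Bn_cond}, the paper passes from the condition $\det\bigl(\tfrac12(C_n^{-1/2}B_n^{1/2}+C_n^{1/2}B_n^{-1/2})\bigr)\to1$ (which the closed form produces) to the $H_\star$-form by invoking local Lipschitz continuity of the determinant together with $C_n\to H_\star^{-1}$; this tacitly requires $B_n$ to stay bounded and bounded away from singularity, which your eigenvalue-squeezing argument --- each factor $\tfrac{1+\mu_i}{2\sqrt{\mu_i}}\geq1$, so a product tending to $1$ forces every eigenvalue to $1$, hence $B_n\to H_\star^{-1}$ --- supplies for free. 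For the rate in item 1, the paper chains local Lipschitz estimates for $x\mapsto 1/x$, for the determinant and for the matrix square root to obtain $d_\mathrm{H}(\LAn,\tilde\mu_n)\leq c\,\|C_n-B_n\|^{1/2}\in\mc O(n^{-1/2})$, whereas your Taylor expansion of $\phi(t)=\log\det(C_n+tE_n)$ gives the sharper bound $d_\mathrm{H}(\LAn,\tilde\mu_n)\in\mc O(\|E_n\|)=\mc O(n^{-1})$; either way the final rate is set by the $\mc O(n^{-1/2})$ term from Theorem \ref{theo:conv_H}. Item 2 is handled exactly as in the paper, and you rightly make explicit that the covariance part needs $\|C_n-B_n\|\in\mc O(n^{-1})$ to carry a rate: the paper's proof of item 2 only bounds the mean-shift term and implicitly relies on the same hypothesis. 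Two cosmetic remarks. First, your identity should read $\log g(C_n,B_n)=\phi(\tfrac12)-\tfrac12\phi(0)-\tfrac12\phi(1)$; the right-hand side you wrote equals $-\tfrac12\log g(C_n,B_n)$. This is harmless, since your argument only uses $\left|\log g(C_n,B_n)\right|\in\mc O(n^{-2})$, which the same cancellation of first-order Taylor terms delivers. Second, the closed form you use (with $g^{-1/2}$, and a negative sign in the exponential) is the standard one; the formulas quoted in the paper's appendix omit the square root on the determinant and carry a sign typo in the exponent, neither of which affects the statement, since $g\to1$ and $\sqrt{g}\to1$ are equivalent.
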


The proof is straightforward given the exact formula for the Hellinger distance of Gaussian measures and can be found in Appendix \ref{sec:conv_H_star_proof}. 
Thus, Theorem \ref{theo:conv_H_star} tells us that, in general, the Gaussian measures $\tilde \mu_n = \mc N(x_n, \frac 1n I_d)$ do not converge to $\mu_n$ as $n\to \infty$ whereas it is easily seen that $\tilde \mu_n = \mc N(x_n, \frac 1n H_\star)$, indeed, do converge.\\

\paragraph{Relation to the Bernstein--von Mises theorem in Bayesian inference.}
The Bernstein--von Mises (BvM) theorem is a classical result in Bayesian inference and asymptotic statistics in $\bbR^d$ stating the posterior consistency under mild assumptions \cite{VanDerVaart1998}.
Its extension to infinite-dimensional situations does not hold in general \cite{DiaconisFreedman1986,Freedman1999}, but can be shown under additional assumptions \cite{GhoshalEtAl2000,CastilloNickl2013,CastilloNickl2014,Nickl2017}.
In order to state the theorem we introduce the following setting: let $Y_i \sim \nu_{x_0}$, $i\in\bbN$, be i.i.d.~random variables on $\bbR^D$, $d\leq D$, following a distribution $\nu_{x_0}(\d y) = \exp(-\ell(y, x_0)) \boldsymbol 1_{\S_y}(y) \d y$ where $\S_y \subset \bbR^D$ and where $\ell\colon \S_y\times \bbR^d \to [- \ell_{\min}, \infty)$ represents the negative log-likelihood function for observing $y \in \S_y$ given a parameter value $x \in \bbR^d$.
Assuming a prior measure $\mu_0(\d x) = \pi_0(x) \boldsymbol 1_{\S_0}(x) \ \d x$ for the unknown parameter, the resulting posterior after $n$ observations $y_i$ of the independent $Y_i$, $i=1,\ldots,n$, is of the form \eqref{equ:post} with
\begin{equation}\label{equ:Phi_large_data}
	\Phi_n(x) = \Phi_n(x; y_1,\ldots, y_n) = \frac 1n \sum_{i=1}^n \ell(y_i,x).
\end{equation}
We will denote the corresponding posterior measure by $\mu_n^{y_1,\ldots,y_n}$ in order to highlight the dependence of the particular data $y_1,\ldots,y_n$.
The BvM theorem states now the convergence of this posterior to a sequence of Gaussian measures. 
This looks very similar to the statement of Theorem \ref{theo:conv_H}.
However, the difference lies in the Gaussian measures as well as the kind of convergence.
In its usual form the BvM theorem states under {similar assumptions as for Theorem \ref{theo:conv_H}} that there holds in the large data limit 
\begin{equation}\label{equ:BvM}
	d_\text{TV}\left(\mu^{Y_1,\ldots, Y_n}_n,\  \mc N(\hat x_n, n^{-1} \mc I^{-1}_{x_0}) \right) \xrightarrow[n\to \infty]{\bbP} 0
\end{equation}
where $\mu^{Y_1,\ldots, Y_n}_n$ is now a random measure depending on the $n$ independent random variables $Y_1,\ldots, Y_n$ and where the convergence in probability is taken w.r.t.~randomness of the $Y_i$.
Moreover, $\hat x_n = \hat x_n(Y_1,\ldots, Y_n)$ denotes an efficient estimator of the true parameter $x_0 \in \S_0$---e.g., the maximum-likelihood or MAP estimator---and $\mc I_{x_0}$ denotes the Fisher information at the true parameter $x_0$, i.e.,
\[
	\mc I_{x_0}
	=
	\ev{ \nabla^2_x \ell(Y_i, x_0)}
	=
	\int_{\bbR^D}
	\nabla^2_x \ell(Y, x_0)
	\
	\exp(- \ell(y, x_0) )\ 
	\d y.
\]
Now both, the BvM theorem and Theorem \ref{theo:conv_H}, state the convergence of the posterior to a concentrating Gaussian measure where the rate of concentration of the latter (or better: of its covariance) is of order $n^{-1}$.
Furthermore, also the rate of convergence in the BvM theorem can be shown to be of order $n^{-1/2}$ \cite{HippMichel1976}.
However, the main differences are: %the kind of convergence and the specific covariance:
\begin{itemize}
\item The BvM states convergence in probability (w.r.t.~the randomness of the $Y_i$) and takes as basic covariance the inverse expected Hessian of the negative log likelihood at the data generating parameter value $x_0$. Working with this quantity requires the knowledge of the true value $x_0$ and the covariance operator is obtained by marginalizing over all possible data outcomes $Y$. This Gaussian measure is not a practical tool to be used but rather a limiting distribution of a powerful theoretical result reconciling Bayesian and classical statistical theory. For this reason, the Gaussian approximation in the statement of the BvM theorem can be thought of as being a \emph{``prior''} approximation (in the loosest meaning of the word).
Usually, a crucial requirement is that the problem is \emph{well-specified} meaning that $x_0$ is an interior point of the prior support $\S_0$---although there exist results for misspecified models, see \cite{KleijnVanDerVaart2012}. Here, a BvM theorem is proven without the assumption that $x_0$ belongs to the interior of $\S_0$.
However, in this case the basic covariance is not the Fisher information but the Hessian of the mapping $x \mapsto d_\text{KL}(\nu_0 || \nu_x)$ evaluated at its unique minimizer where $d_\text{KL}(\nu_0 || \nu_x)$ denotes the Kullback-Leibler divergence of the data distribution $\nu_x$ given parameter $x \in \S_0$ w.r.t.~the true data distribution $\nu_0$.
 
\item
Theorem \ref{theo:conv_H} states the convergence for given realizations $y_i$ and takes the Hessian of the negative log posterior density evaluated at the current MAP estimate $x_n$ and the current data $y_1,\ldots,y_n$. This means that we do not need to know the true parameter value $x_0$ and we employ the actual data realization at hand rather than averaging over all outcomes. Hence, we argue that the Laplace approximation (as stated in this context) provides a \emph{``posterior''} {approximation converging to the Bayesian posterior as $n\to \infty$}.

Also, we require that the limit $x_\star = \lim_{n\to\infty} x_n$ is an interior point of the prior support $\S_0$.

\rev{\item From a numerical point of view, the Laplace approximation requires the computation of the MAP estimate and the corresponding Hessian at the MAP, whereas the BvM theorem employs the Fisher information, i.e.~requires an expectation w.r.t.~the observable data. 
Thus, the Laplace approximation is based on fixed and finite data in contrast to the BvM.}
\end{itemize}

The following example illustrates the difference between the two Gaussian measures: Let $x_0\in \mathbb R$ be an unknown parameter. {
Consider $n$ measurements $y_k \in \bbR$, $k=1,\ldots,n$, where $y_k$ is a realization of
\[
	Y_k = x_0^3 + \epsilon_k
\]
with $\epsilon_k\sim N(0, \sigma^2)$ i.i.d.}.
For the Bayesian inference we assume a prior $N(0,\tau^2)$ on $x$. Then the Bayesian posterior is of the form $\mu_n(\d x) \propto \exp(-nI_n(x))$ where
\[
	I_n(x) = \frac{x^2}{n\cdot 2\tau^2} +  \underbrace{\frac{1}{n\cdot 2\sigma^2}\sum_{k=1}^n ({y_k} - x^3)^2}_{=\Phi_n(x)}. 
\]
The MAP estimator $x_n$ is the Laplace approximation's mean and can be computed numerically as a minimizer of $I_n(x)$.
{It can be shown that $x_n$ converges to $x_\star = x_0$ for almost surely all realizations $y_k $ of $Y_k$ due to the strong law of large numbers.}
Now we take the Hessian ({w.r.t.~$x$}) of $I_n$,
\[
	\nabla^2I_n(x) = \frac{1}{n\cdot\tau^2} + \frac{15}{\sigma^2}\cdot x^4 - 6x\cdot \frac{1}{n\cdot \sigma^2}\sum_{k=1}^n y_k
\]
 and evaluate it in $x_n$ to obtain the covariance of the Laplace approximation, and, thus,
\[
	\LAn = \mathcal N\left(x_n, \frac{1}{\frac{1}{n\cdot\tau^2} + \frac{15}{\sigma^2}\cdot x_n^4 - 6x_n\cdot \frac{1}{n\cdot \sigma^2}\sum_{k=1}^n {y_k}}\right).  
\]
On the other hand we compute the Gaussian {BvM} approximation: 
{The} Fisher information is given as (recall that $\Phi$ is the loglikelihood term as defined above)
\[
	{\mathbb E}^{x_0} [\nabla_x^2 \Phi(x_0)] 
	= \mathbb E \left[ \frac{15}{\sigma^2}\cdot x_0^4 - 6x_0\cdot \frac{1}{n\cdot \sigma^2}\sum_{k=1}^n Y_k \right] 
	= \frac{15}{\sigma^2}\cdot x_0^4  - 6x_0\cdot \frac{1}{\sigma^2} x_0^3 
	= \frac{9}{\sigma^2}x_0^4
\]
and hence we get the Gaussian approximation
\[ 
	\mu_{\text{BVM}} = \mathcal N\left(x_n, \frac{\sigma^2}{9\cdot x_0^4}\right).
\]
Now we clearly see the difference between the two measures and how they will be asymptotically identical, since $x_n\to x_\star = x_0$ due to consistency, $\frac{1}{n}\sum_{k=1}^n y_k$ converging a.s.~to $x_0^3$ due to the strong law of large numbers, and with the prior-dependent part vanishing for $n\to 0$.

\begin{remark}
Having raised the issue whether the BvM approximation $\mc N(\hat x_n, n^{-1} \mc I^{-1}_{x_0})$ or the Laplace one $\LAn$ is closer to a given posterior $\mu_n$, one can of course ask for the best Gaussian approximation of $\mu_n$ w.r.t.~a certain distance or divergence.
Thus, we mention \cite{LuEtAl2017,PinskiEtAl2013} where such  a best approximation w.r.t.~the Kullback-Leibler divergence is considered.
The authors also treat the case of best Gaussian mixture approximations for multimodal distributions and state a BvM like convergence result for the large data (and small noise) limit.
However, the computation of such a best approximation can become costly whereas the Laplace approximation can be obtained rather cheaply.
\end{remark}

%% ------------------------------------------------------------------
%% SINGULAR HESSIANS
%% ------------------------------------------------------------------
\subsection{The case of singular Hessians}  
The assumption, that the Hessians $H_n = \nabla^2 \Phi_n(x_n)$ as well as their limit $H_\star$ are positive definite, is quite restrictive.
For example, for Bayesian inference with more unknown parameters than observational information, this assumption is not satisfied.
Hence, we discuss in this subsection the convergence of the Laplace approximation in case of singular Hessians $H_n$ and $H_\star$.
\rev{Nonetheless, we assume throughout the section that Assumption \ref{assum:LA_0} is satisfied. This yields that the Laplace approximation $\LAn$ is well-defined.}
This means in particular that we suppose a regularizing effect of the log prior density $\log \pi_0$ on the minimization of $I_n(x) = \Phi_n(x) - \frac 1n \log \pi_0(x)$.

We first discuss necessary conditions for the convergence of the Laplace approximation and subsequently state a positive result for Gaussian \rev{prior} measures $\mu_0$.

\paragraph{Necessary conditions.}
Let us consider the simple case of $\Phi_n\equiv \Phi$, i.e., the probability measures $\mu_n$ are given by
\[
	\mu_n(\d x) \propto \exp\left(-n \Phi(x)\right)\, \mu_0(\d x),
\]
where we assume now that $\Phi \colon \S_0 \to [c, \infty)$ with $c>-\infty$.
Intuitively, $\mu_n$ should converge weakly to the Dirac measure $\delta_{\mc M_\Phi}$ on the \rev{set}
\[
	\mc M_\Phi \coloneqq \argmin_{x\in\S_0} \Phi(x).
\]
On the other hand, the associated Laplace approximations $\LAn$ will converge weakly to the Dirac measure $\delta_{\mc M_{\mc L}}$ \rev{in the affine subspace}
\[
	\mc M_{\mc L} \coloneqq \{x \in \bbR^d\colon (x-x_\star)^\top H_\star (x-x_\star) = 0\}.
\]
Hence, it is necessary for the convergence $\LAn \to \mu_n$ in total variation or Hellinger distance that $\mc M_\Phi = \mc M_{\mc L}$, i.e., that the \rev{set} of minimizers of $\Phi$ is linear. 
In order to ensure the latter, we state the following.

\begin{assumption}\label{assum:LA_Conv_linear_subspace}
Let $\mc X\subseteq \bbR^d$ be a linear subspace such that for a projection $\P_{\mc X}$ onto $\mc X$ there holds

\[
	 \Phi_n \equiv \Phi_n \circ \P_{\mc X} \qquad \text{ on } \S_0 \text{ for each } n\in\bbN
\]
and let the restriction $\Phi_n \colon \mc X \to \bbR$ possess a unique and nondegenerate global minimum for each $n\in\bbN$.
\end{assumption}
For the case $\Phi_n = \Phi$ this assumption implies, that
\[
	\mc M_{\Phi} = \argmin_{x\in\S_0} \Phi(x) = x_\star + \mc X^c
	%subseteq \{x \in \bbR^d\colon x^\top H_n x = 0\},
\]
where $\mc X^c$ denotes a complementary subspace to $\mc X$, i.e., $\mc X \oplus \mc X^c = \bbR^d$ and $x_\star\in \mc X$ {the unique minimizer of $\Phi$ over $\mc X$}.
Besides that, Assumption \ref{assum:LA_Conv_linear_subspace} also yields that $x^\top H_n x = 0$ iff $x \in \mc X^c$.
Hence, this also holds for the limit $H_\star = \lim_{n\to\infty} H_n$ and we obtain 
\[
	\mc M_{\mc L} = x_{\star} + \mc X^c = \mc M_{\Phi}.
\]
Moreover, since Assumption \ref{assum:LA_Conv_linear_subspace} yields
\[
	\mu_n(\d x) \propto \exp \left( - n \Phi_n(x_{\mc X})\right) \mu_0(\d x_{\mc X} \d x_{c}),
\]
where $x_{\mc X} \coloneqq \P_{\mc X}x$ and $x_{c} \coloneqq \P_{\mc X^c}x = x - x_{\mc X}$, the marginal of $\mu_n$ coincides with the marginal of $\mu_0$ on $\mc X^c$.
Hence, the Laplace approximation can only converge to $\mu_n$ \rev{in total variation or Hellinger distance} if this marginal is Gaussian.
We, therefore, consider the special case of Gaussian \rev{prior} measures $\mu_0$. 

\begin{remark}
\rev{Please note that, despite this to some extent negative result for the Laplace approximation for singular Hessians, the preconditioning of sampling and quadrature methods via the Laplace approximation may still lead to efficient algorithms in the small noise setting. 
The analysis of Laplace approximation-based sampling methods, as introduced in the next section, in the underdetermined case will be subject to future work.}
\end{remark}

\paragraph{Convergence for Gaussian \rev{prior} $\mu_0$.}

A useful feature of Gaussian \rev{prior} measures $\mu_0$ is that the Laplace approximation possesses a convenient representation via its density w.r.t.~$\mu_0$. 

\begin{proposition}[{cf. \cite[Proposition 1]{Wacker2017}}] \label{propo:Laplace_Gaussian_Prior}
Let Assumption \ref{assum:LA_0} be satisfied and $\mu_0$ be Gaussian. 
Then there holds
\begin{equation}\label{eq:LA_RadonNikodym}
	\frac{\d \LAn}{\d \mu_0}(x)
	\propto
	\exp(- n T_2\Phi_n(x; x_n)),
	\qquad
	x \in \bbR^d,
\end{equation}
where $T_2\Phi_n(\cdot; x_n)$ denotes the Taylor polynomial of order 2 of $\Phi_n$ at the point $x_n\in\bbR^d$.
\end{proposition}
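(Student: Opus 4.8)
The plan is to compute the Radon--Nikodym derivative $\d\LAn/\d\mu_0$ directly as a ratio of Lebesgue densities and then exploit the fact that the log-density of a Gaussian prior is itself a quadratic polynomial. Since $\mu_0$ has Lebesgue density $\pi_0$ and $\LAn$ has Lebesgue density $\tilde\pi_n$ as in \eqref{equ:Laplace2}, absolute continuity of $\LAn$ w.r.t.~$\mu_0$ (note $\pi_0 > 0$ on $\S_0$, and $\S_0 = \bbR^d$ since $\mu_0$ is Gaussian) gives
\[
	\frac{\d\LAn}{\d\mu_0}(x)
	= \frac{\tilde\pi_n(x)}{\tilde Z_n\,\pi_0(x)}
	\propto \frac{\exp(-n\tilde I_n(x))}{\pi_0(x)},
\]
where $\tilde I_n = T_2 I_n(\cdot;x_n)$ is the second-order Taylor polynomial of $I_n$ at $x_n$ by \eqref{equ:Laplace3} (well-defined since $\Phi_n,\pi_0\in C^2(\S_0,\bbR)$ by Assumption \ref{assum:LA_0}). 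It then remains only to rewrite $\exp(-n\tilde I_n(x))$ in terms of $\Phi_n$ and $\pi_0$.

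The key step uses two observations. First, the Taylor operator $T_2(\cdot;x_n)$ is linear, so from $I_n = \Phi_n - \tfrac1n\log\pi_0$ one obtains
\[
	\tilde I_n(x)
	= T_2\Phi_n(x;x_n) - \tfrac1n\, T_2\log\pi_0(x;x_n).
\]
Second --- and this is the only place where the Gaussianity of $\mu_0$ enters --- the function $\log\pi_0$ is, up to an additive constant, the quadratic form $-\tfrac12\|x-m_0\|^2_{\Sigma_0^{-1}}$ determined by the mean $m_0$ and covariance $\Sigma_0$ of $\mu_0$, and hence equals its own second-order Taylor polynomial: $T_2\log\pi_0(\cdot;x_n)\equiv\log\pi_0$. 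Substituting this back yields $n\tilde I_n(x) = n\,T_2\Phi_n(x;x_n) - \log\pi_0(x)$, so that $\exp(-n\tilde I_n(x)) = \exp(-n\,T_2\Phi_n(x;x_n))\,\pi_0(x)$.

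Dividing by $\pi_0(x)$ cancels the prior density and produces $\d\LAn/\d\mu_0(x)\propto\exp(-n\,T_2\Phi_n(x;x_n))$, as claimed. I do not expect any genuine obstacle here: the computation is short, and the only nontrivial ingredient is the recognition that a Gaussian log-density is \emph{exactly} quadratic, which is precisely what makes the second-order Taylor remainder of $\log\pi_0$ vanish identically and renders the cancellation exact for every $n$ rather than merely asymptotic.
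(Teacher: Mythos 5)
Your proof is correct, and it follows exactly the argument the paper intends: the paper defers to the cited reference and immediately remarks that \eqref{eq:LA_RadonNikodym} holds precisely for priors with $\nabla^3 \log \pi_0 \equiv 0$, which is the same mechanism you isolate --- the Gaussian log-density equals its own second-order Taylor polynomial, so $T_2 I_n(\cdot;x_n) = T_2\Phi_n(\cdot;x_n) - \tfrac1n \log\pi_0$ and the prior density cancels exactly in the ratio of Lebesgue densities. No gaps; the identification of $\tilde I_n$ with $T_2 I_n(\cdot;x_n)$ via $I_n(x_n)=0$, $\nabla I_n(x_n)=0$ from Assumption \ref{assum:LA_0} is exactly as in \eqref{equ:Laplace3}.
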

In fact, the representation \eqref{eq:LA_RadonNikodym} does only hold for \rev{prior} measures $\mu_0$ with Lebesgue density $\pi_0\colon \bbR^d \to \bbR^{d\times d}$ satisfying $\nabla^3 \log \pi_0 \equiv 0$.

\begin{corollary}\label{cor:conv_H_sing}
Let Assumption \ref{assum:LA_0} be satisfied and $\mu_0$ be Gaussian. 
Further, let Assumption \ref{assum:LA_Conv_linear_subspace} hold true and assume that the restriction $\Phi_n \colon \mc X \to \bbR$ and the marginal density $\pi_0$ on $\mc X$ satisfy Assumption \ref{assum:LA_Conv} on $\mc X$.
Then the approximation result of Theorem \ref{theo:conv_H} holds.
\end{corollary}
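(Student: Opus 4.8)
The plan is to reduce the degenerate $d$-dimensional problem to a \emph{nondegenerate} lower-dimensional problem on the subspace $\mc X$, on which Theorem \ref{theo:conv_H} already applies, and to show that both the Hellinger distance and the two measures involved are compatible with this reduction. Throughout I write $x_{\mc X} = \P_{\mc X} x \in \mc X$ and $x_c = x - x_{\mc X} \in \mc X^c$, and I use that a nondegenerate Gaussian $\mu_0$ factorizes as $\mu_0(\d x) = \mu_0^{\mc X}(\d x_{\mc X})\, \kappa(\d x_c \mid x_{\mc X})$ into its $\mc X$-marginal $\mu_0^{\mc X}$ (a Gaussian on $\mc X$ with density $\pi_0^{\mc X}$) and a Gaussian conditional kernel $\kappa(\cdot\mid x_{\mc X})$, whose covariance is independent of $x_{\mc X}$.

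First I would observe that the Radon--Nikodym densities of both $\mu_n$ and $\LAn$ with respect to $\mu_0$ depend only on $x_{\mc X}$. For $\mu_n$ this is immediate from $\frac{\d\mu_n}{\d\mu_0}(x) \propto \exp(-n\Phi_n(x)) = \exp(-n\Phi_n(x_{\mc X}))$ by Assumption \ref{assum:LA_Conv_linear_subspace}. For $\LAn$, Proposition \ref{propo:Laplace_Gaussian_Prior} gives $\frac{\d\LAn}{\d\mu_0}(x) \propto \exp(-n\, T_2\Phi_n(x; x_n))$, and since $\Phi_n = \Phi_n\circ\P_{\mc X}$ its gradient at $x_n$ is orthogonal to $\mc X^c$ and its Hessian annihilates $\mc X^c$, so the Taylor polynomial $T_2\Phi_n(\cdot\,; x_n)$ likewise factors through $\P_{\mc X}$. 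Consequently both $\mu_n$ and $\LAn$ share with $\mu_0$ the same conditional $\kappa(\cdot\mid x_{\mc X})$ and differ only in their $\mc X$-marginals $\mu_n^{\mc X}$ and $\LAn^{\mc X}$.

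This is precisely what collapses the Hellinger distance: dominating all measures by $\rho = \lambda\otimes\kappa$ with $\lambda$ a reference measure on $\mc X$, the common conditional factor cancels in $\sqrt{\d\mu_n/\d\rho} - \sqrt{\d\LAn/\d\rho}$ and the $x_c$-integral of $\kappa$ equals one, so that
\[
	d_\mathrm{H}(\mu_n, \LAn) = d_\mathrm{H}\!\left(\mu_n^{\mc X}, \LAn^{\mc X}\right).
\]
It then remains to recognize the two marginals as a posterior and its Laplace approximation on $\mc X$. On the one hand, $\mu_n^{\mc X}(\d x_{\mc X}) \propto \exp(-n\Phi_n(x_{\mc X}))\,\mu_0^{\mc X}(\d x_{\mc X})$ is exactly a measure of the form \eqref{equ:post} on $\mc X$ with Gaussian prior $\mu_0^{\mc X}$ and negative log-likelihood $\Phi_n|_{\mc X}$, which by hypothesis satisfies Assumption \ref{assum:LA_Conv} on $\mc X$. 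On the other hand, marginalizing the density above shows $\frac{\d\LAn^{\mc X}}{\d\mu_0^{\mc X}}(x_{\mc X}) \propto \exp(-n\,T_2\Phi_n(x_{\mc X}; \P_{\mc X}x_n))$ viewed as a function on $\mc X$, which by Proposition \ref{propo:Laplace_Gaussian_Prior} applied on $\mc X$ is the Laplace approximation of $\mu_n^{\mc X}$ \emph{provided} its center $\P_{\mc X}x_n$ is the minimizer of $\bar I_n(x_{\mc X}) \coloneqq \Phi_n(x_{\mc X}) - \frac1n\log\pi_0^{\mc X}(x_{\mc X})$ on $\mc X$.

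The verification of this last point is the main obstacle, as it must untangle the cross-correlations that the Gaussian prior introduces between $\mc X$ and $\mc X^c$. Writing $I_n(x) = \bar I_n(x_{\mc X}) - \frac1n\log k(x_c\mid x_{\mc X})$ via the factorization of $\mu_0$, where $k(\cdot\mid x_{\mc X})$ is the Lebesgue density of the conditional $\kappa(\cdot\mid x_{\mc X})$, I would minimize over the fiber $x_c$ first: since $k(\cdot\mid x_{\mc X})$ is Gaussian with $x_{\mc X}$-independent covariance, it is maximized at the (affine) conditional mean with a peak value independent of $x_{\mc X}$. Hence minimizing $I_n$ over $x_c$ contributes a constant and decouples the two directions, giving $\P_{\mc X}x_n = \argmin_{x_{\mc X}}\bar I_n$, as required. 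With the identification complete, Theorem \ref{theo:conv_H} applied to the nondegenerate problem on $\mc X$ yields $d_\mathrm{H}(\mu_n^{\mc X}, \LAn^{\mc X})\in\mc O(n^{-1/2})$, and the Hellinger reduction above transfers this rate to $d_\mathrm{H}(\mu_n, \LAn)$, completing the proof.
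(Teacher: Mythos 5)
Your proposal is correct and follows essentially the same route as the paper's proof: both use Proposition \ref{propo:Laplace_Gaussian_Prior} to write $\mu_n$ and $\LAn$ as densities w.r.t.\ the Gaussian $\mu_0$ that depend only on $x_{\mc X}$, disintegrate $\mu_0$ over the complementary directions so the Hellinger distance collapses to an integral over $\mc X$, and then invoke Theorem \ref{theo:conv_H} for the reduced nondegenerate problem on $\mc X$. Your explicit verification that $\P_{\mc X}x_n$ minimizes $\bar I_n(x_{\mc X}) = \Phi_n(x_{\mc X}) - \frac 1n \log\pi_0^{\mc X}(x_{\mc X})$ (via the $x_{\mc X}$-independence of the peak of the Gaussian conditional density) is a point the paper leaves implicit, and it is a welcome addition rather than a deviation.
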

\begin{proof}
By using Proposition \ref{propo:Laplace_Gaussian_Prior}, we can express the Hellinger distance $d_\text{H}(\mu_n,\LAn)$ as follows
\begin{align*}
	d^2_\text{H}(\mu_n,\LAn)  
	& = \int_{\bbR^{d}} \left(\sqrt{\frac{\d \mu_n}{\d \mu_0}(x)} - \sqrt{\frac{\d \LAn}{\d \mu_0}(x)} \right)^2 \mu_0(\d x)\\
	& = \int_{\bbR^{d}} \left(\sqrt{\frac{\exp(-n \Phi_n(x))}{Z_n}} - \sqrt{\frac{\exp(-n T_2\Phi_n(x;x_n))}{\tilde Z_n}} \right)^2 \mu_0(\d x).
\end{align*}
We use now the decomposition $\bbR^d = \mc X \oplus \mc X^c$ with $x  \coloneqq x_{\mc X} + x_{c}$ for $x\in\bbR^d$ with $x_{\mc X} \in \mc X$ and $x_c \in \mc X^c$.
We note, that due to Assumption \ref{assum:LA_Conv_linear_subspace}, we have that
\[
	T_2\Phi_n(x;x_n) = T_2\Phi_n(x_{\mc X};x_n), \qquad x \in \bbR^d.
\]
We then obtain by disintegration and denoting $\tilde \Phi_n(x) \coloneqq T_2\Phi_n(x;x_n) = \tilde \Phi_n(x_{\mc X})$
\begin{align*}
	d^2_\text{H}(\mu_n,\LAn)  
	& = \int_{\bbR^{d}} \left(\sqrt{\frac{\e^{-n \Phi_n(x_{\mc X})}}{Z_n}} - \sqrt{\frac{\e^{-n \tilde \Phi_n(x_{\mc X})}}{\tilde Z_n}} \right)^2 \mu_0(\d x_{\mc X} \d x_c)\\
%	& = \int_{\mc X} \int_{\mc X^c} \left(\sqrt{\frac{\exp(-n \Phi_n(x_{\mc X}))}{Z_n}} - \sqrt{\frac{\exp(-n T_2\Phi_n(x_{\mc X}; x_n))}{\tilde Z_n}} \right)^2 \mu_0(\d x_{c} | x_{\mc X}) \ \mu_0(\d x_{\mc X})\\
	& = \int_{\mc X} \int_{\mc X^c} \left(\sqrt{\frac{\e^{-n \Phi_n(x_{\mc X})}}{Z_n}} - \sqrt{\frac{\e^{-n \tilde \Phi_n(x_{\mc X})}}{\tilde Z_n}} \right)^2 \mu_0(\d x_{c} | x_{\mc X}) \ \mu_0(\d x_{\mc X})\\
	& = \int_{\mc X} \left(\sqrt{\frac{\e^{-n \Phi_n(x_{\mc X})}}{Z_n}} - \sqrt{\frac{\e^{-n \tilde \Phi_n(x_{\mc X})}}{\tilde Z_n}} \right)^2 \mu_0(\d x_{\mc X}),
\end{align*}
where $\mu_0(\d x_{\mc X})$ denotes the marginal of $\mu_0$ on $\mc X$. 
Since $\Phi_n$ and $I_n(x_{\mc X}) = \Phi_n(x_{\mc X}) - \frac 1n \log \pi_0(x_{\mc X})$, where $\pi_0(x_{\mc X})$ denotes the Lebesgue density of the marginal $\mu_0(\d x_{\mc X})$, satisfy the assumptions of Theorem \ref{theo:conv_H} on $\S_0 \cap \mc X = \mc X$, the statement follows.
\end{proof}

We provide some illustrative examples for the theoretical results stated in this subsection. 

\begin{example}[Divergence of the Laplace approximation in the singular case]\label{exam:2D_1}
We assume a Gaussian prior $\mu_0 = N(0, I_2)$ on $\bbR^2$ and $\Phi(x) = \|y - F(x)\|^2$ where
\begin{equation}\label{eq:ex_2D_2}
	y = 0,
	\qquad
	F(x) = x_2-x_1^2,
	\qquad
	x = (x_1,x_2) \in \bbR^2.
\end{equation}
We plot the Lebesgue densities of the resulting $\mu_n$ and $\LAn$ for $n=128$ in the left and middle panel of Figure \ref{fig:exam4_Phi_error}. 
The red line in both plots indicate the different \rev{sets}
\[
	\mc M_\Phi = \{x \in \bbR^2\colon x_2 = x_1^2\},
	\qquad
	\mc M_{\mc L} = \{x \in \bbR^2\colon x_2 = 0\},
\]
around which $\mu_n$ and $\LAn$, respectively, concentrate as $n\to \infty$.
As $\mc M_\Phi  \neq \mc M_{\mc L}$, we observe no convergence of the Laplace approximation as $n\to \infty$, see the right panel of Figure \ref{fig:exam4_Phi_error}.
Here, the Hellinger distance is computed numerically by applying a tensorized trapezoidal rule on a suffieciently large subdomain of $\bbR^2$.

\begin{figure}[htb]
\includegraphics[width = 0.32\textwidth]{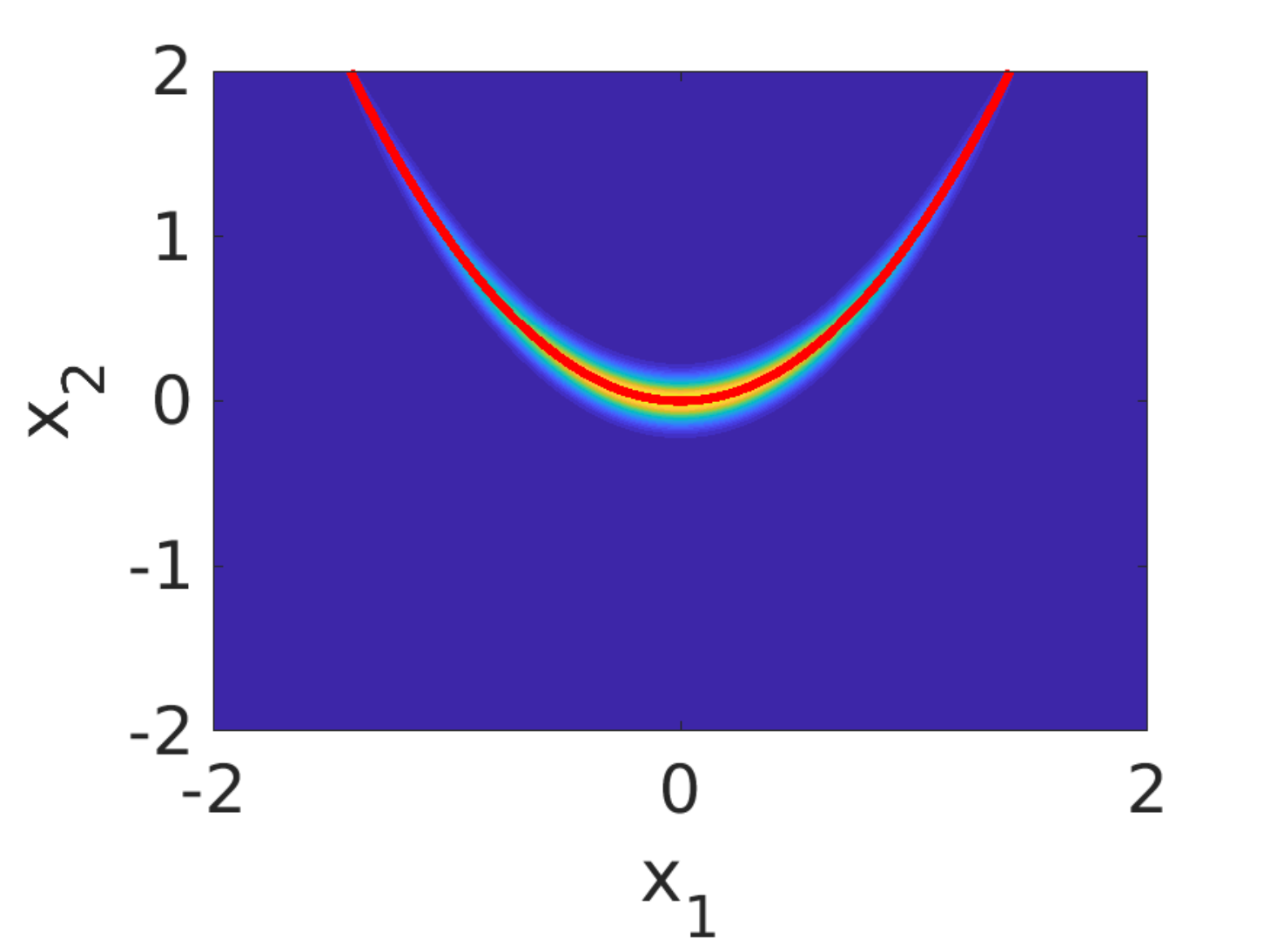}
\hfill
\includegraphics[width = 0.32\textwidth]{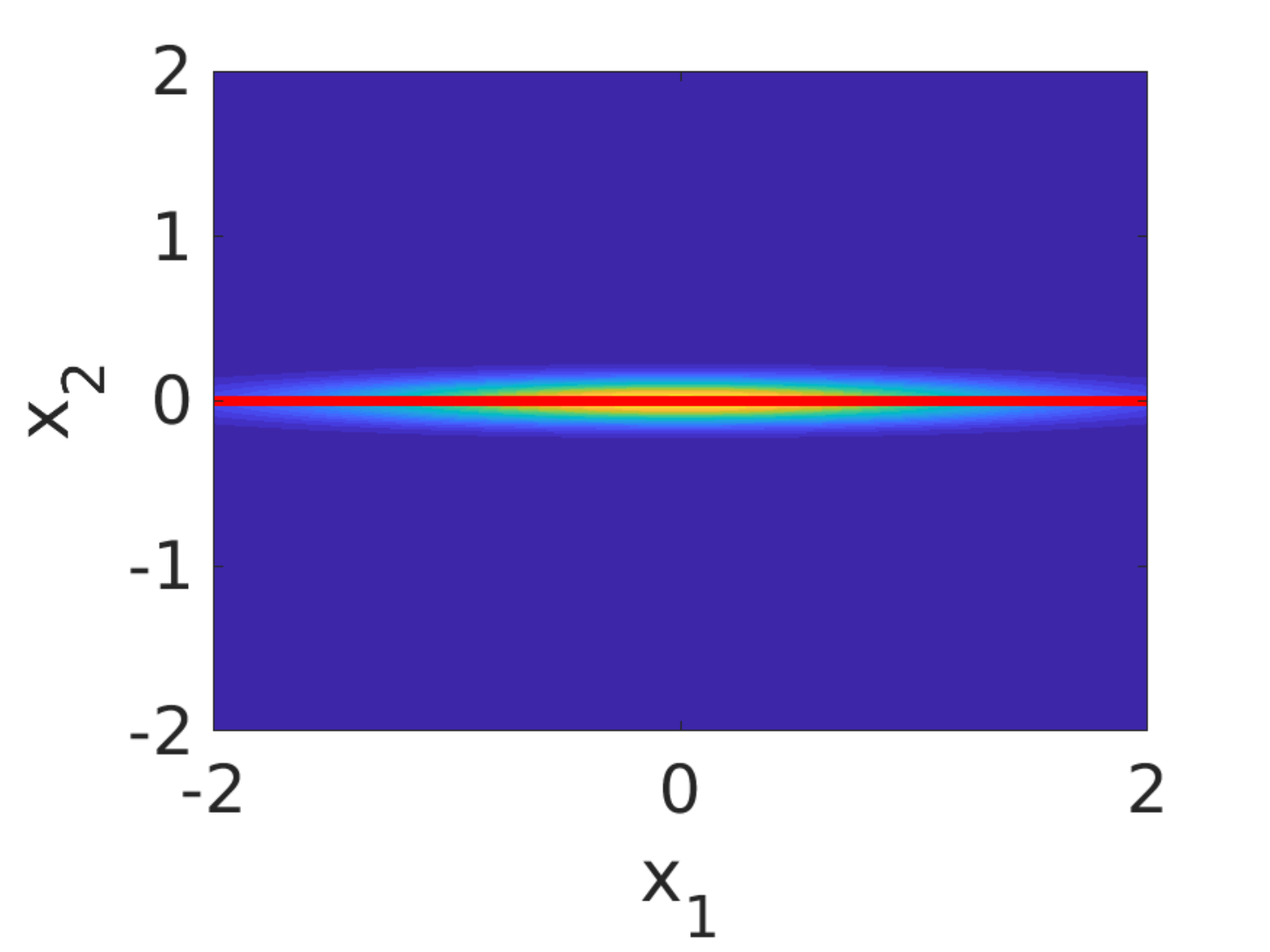}
\hfill
\includegraphics[width = 0.32\textwidth]{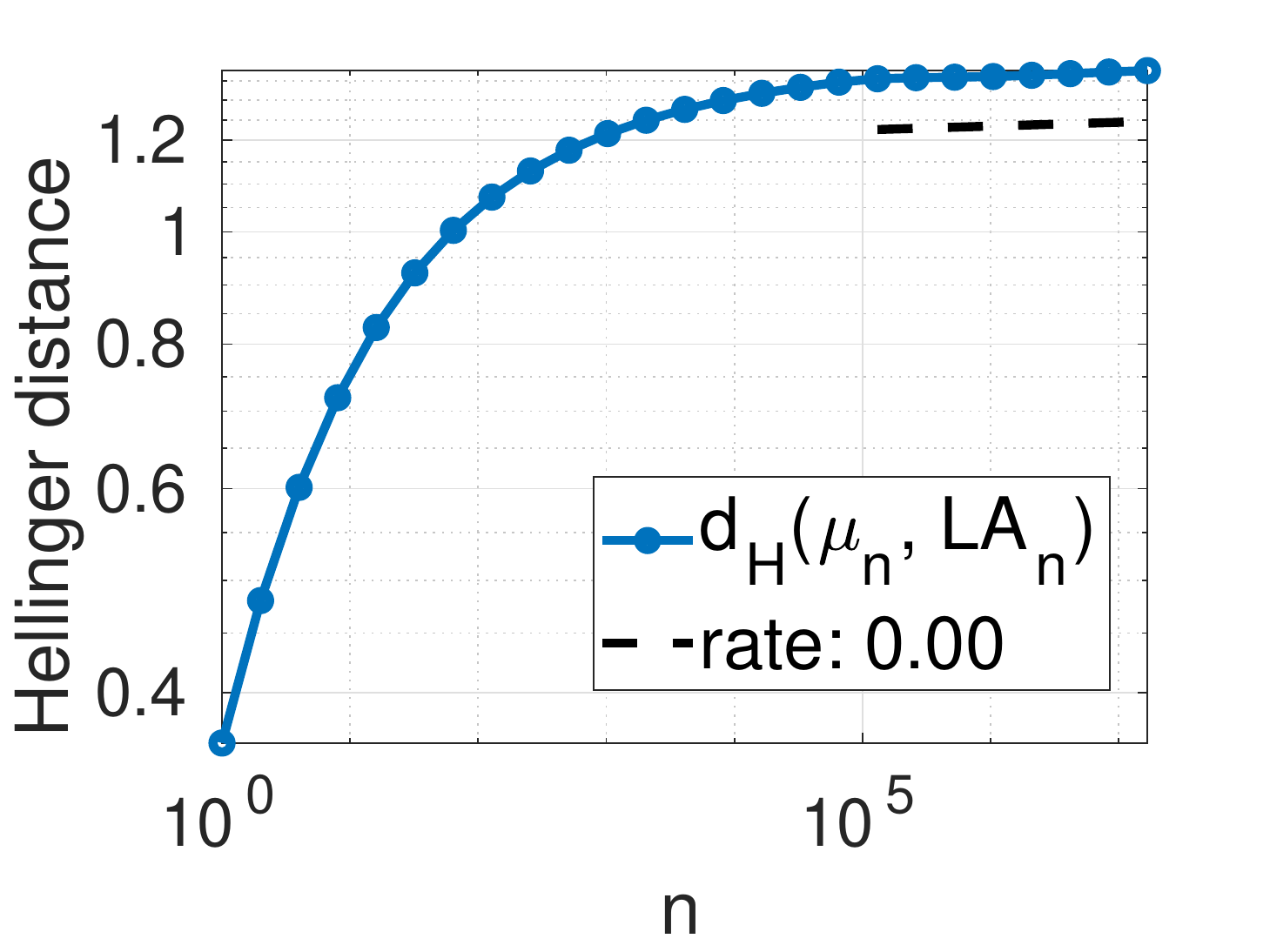}

\caption{Plots of the Lebesgue densities of $\mu_n$ (left) and $\LAn$ (middle) for $n=128$ as well as the Hellinger distance between $\mu_n$ and $\LAn$ for Example \ref{exam:2D_1}. %specified in \eqref{eq:ex_2D_2}. 
The red line in the left and middle panel represents the \rev{set} $\mc M_\Phi$ and $\mc M_{\mc L}$ around which $\mu_n$ and $\LAn$, respectively, concentrate as $n\to \infty$.
}
\label{fig:exam4_Phi_error}
\end{figure}
\end{example}

\begin{example}[Convergence of the Laplace approximation in the singular case in the setting of Corollary \ref{cor:conv_H_sing}]\label{exam:2D_2}
Again, we suppose a Gaussain prior $\mu_0 = N(0, I_2)$ and $\Phi$ in the form of {$\Phi(x) = \|y - F(x)\|^2$} with 
\begin{equation}\label{eq:ex_2D_3}
	y = \begin{pmatrix} \frac \pi2\\ 0.5\end{pmatrix},
	\qquad
	F(x) = \begin{pmatrix} \exp((x_2-x_1)/5)\\ \sin(x_2-x_1) \end{pmatrix},
	\qquad
	x = (x_1,x_2) \in \bbR^2.
\end{equation}
Thus, the invariant subspace is $\mc X^c = \{x \in\bbR^2 \colon x_1 = x_2\}$.
In the left and middle panel of Figure \ref{fig:exam5_Phi_error} we present the Lebesgue densities of $\mu_n$ and its Laplace approximation $\LAn$ for $n=25$ and by the red line the \rev{sets} $\mc M_\Phi = \mc M_{\mc L} = x_\star + \mc X^c$. 
We observe the convergence guaranteed by Corollary \ref{cor:conv_H_sing} in the right panel of Figure \ref{fig:exam5_Phi_error} where we can also notice a preasymptotic phase with a shortly increasing Hellinger distance.
Such a preasmyptotic phase is to be expected due to $d_\text{H}(\mu_n, \LAn) \in \mc O(n^{-1/2}) + \mc O(\e^{- n \delta_r} n^{d/2})$ as shown in the proof of Theorem \ref{theo:conv_H}.

\begin{figure}[htb]
\includegraphics[width = 0.32\textwidth]{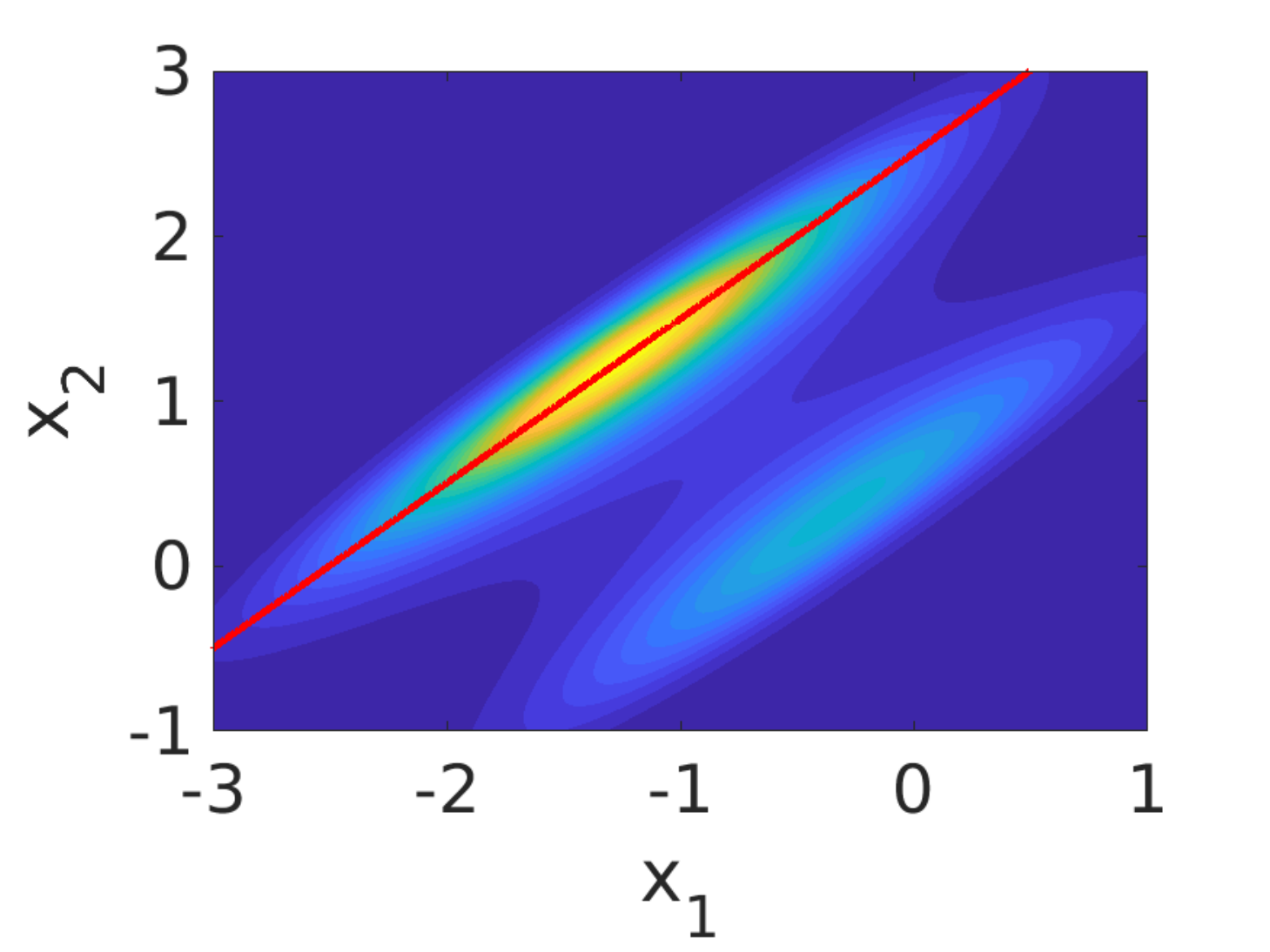}
\hfill
\includegraphics[width = 0.32\textwidth]{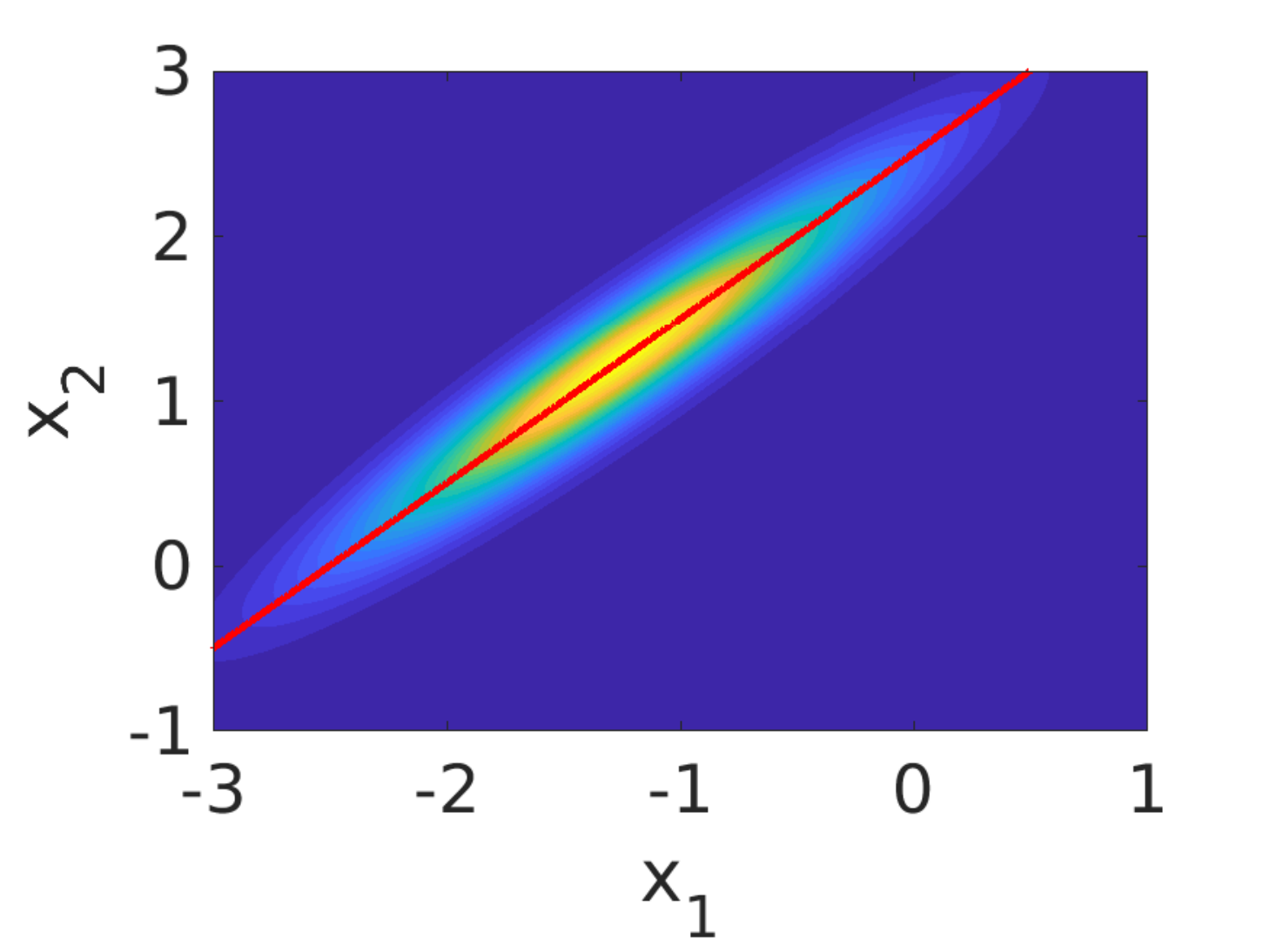}
\hfill
\includegraphics[width = 0.32\textwidth]{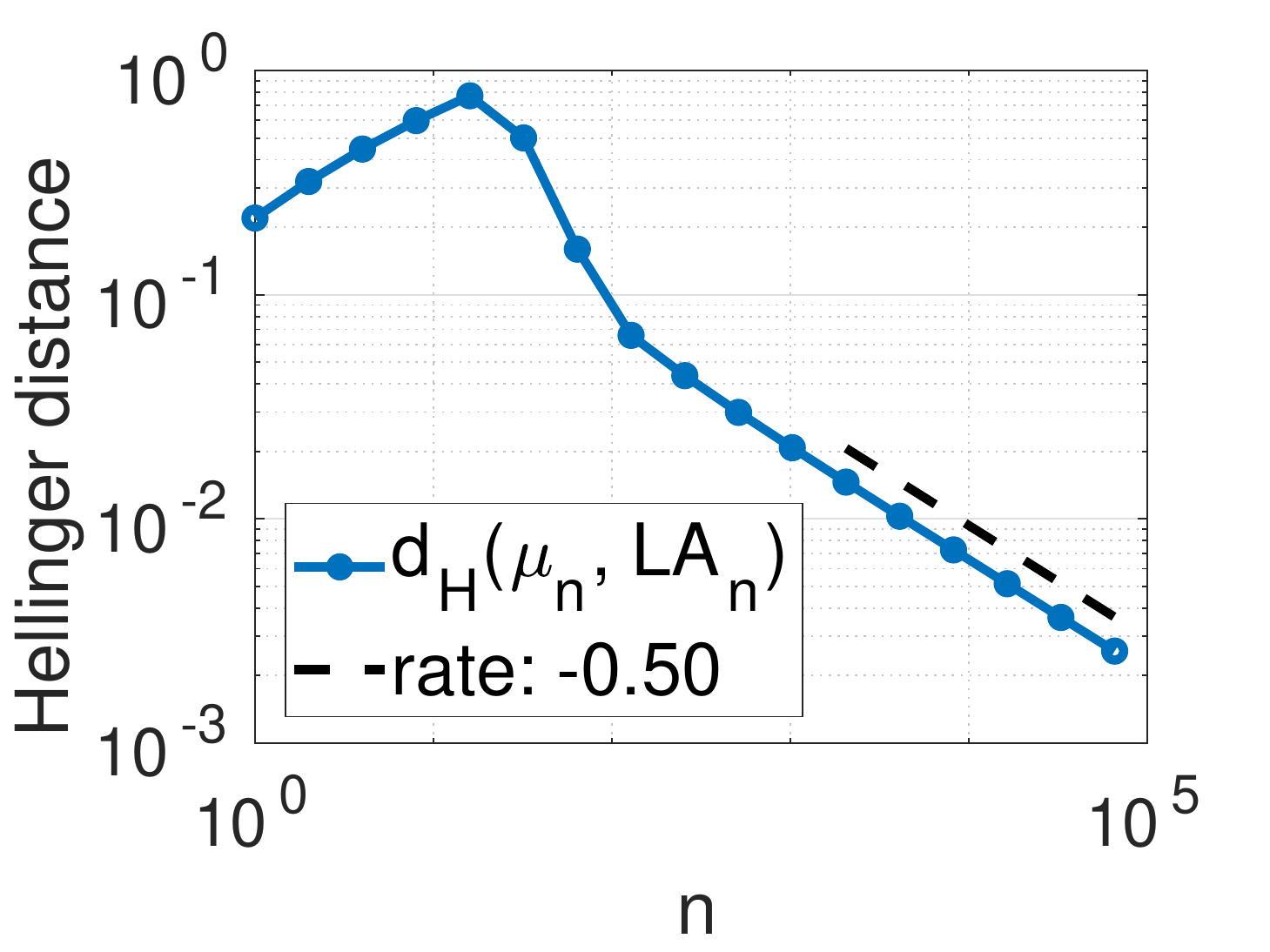}

\caption{Same as in Figure \ref{fig:exam4_Phi_error} but for Example \ref{exam:2D_2}.}
\label{fig:exam5_Phi_error}
\end{figure}
\end{example}

%% ------------------------------------------------------------------
%% NUMERICAL METHODS WITH LAPLACE
%% ------------------------------------------------------------------
\section{{Robustness of Laplace-based Monte Carlo Methods}}
\label{sec:numerics}
In practice, we are often interested in expectations or integrals of quantities of interest $f\colon \bbR^d\to \bbR$ w.r.t.~$\mu_n$ such as
\[
	\int_{\bbR^d} f(x) \ \mu_n(\d x).
\]
For example, in Bayesian statistics the posterior mean ($f(x) = x$) or posterior probabilities ($f(x) = \mathbf 1_A(x)$, $A \in \mc B(\bbR^d)$) are desirable quantities.
Since $\mu_n$ is seldom given in explicit form, numerical integration must be applied for approximating such integrals.
To this end, since the prior measure $\mu_0$ is typically a well-known measure for which efficient numerical quadrature methods are available, the integral w.r.t.~$\mu_n$ is rewritten as two integrals w.r.t.~$\mu_0$
\begin{equation}\label{eq:ZZ}
	\int_{\bbR^d} f(x) \ \mu_n(\d x)
	=
	\frac{\int_{\bbR^d} f(x) \ \exp(-n\Phi_n(x))\ \mu_0(\d x)}{\int_{\bbR^d} \exp(-n\Phi_n(x))\ \mu_0(\d x)}\eqqcolon\frac{Z'_n}{Z_n}.
\end{equation}
If then a quadrature rule such as $\int_{\bbR^d} g(x) \ \mu_0(\d x) \approx \frac 1N \sum_{i=1}^N w_i\ g(x_i)$ is used, we end up with an approximation
\[
	\int_{\bbR^d} f(x) \ \mu_n(\d x)
	\approx
	\frac {\sum_{i=1}^N w_i\ f(x_i) \exp(-n\Phi_n(x_i))}{\sum_{i=1}^N w_i\ \exp(-n\Phi_n(x_i))}.
\]
This might be a good approximation for \rev{small} $n \in \bbN$.
However, as soon as $n\to \infty$ the likelihood term $\exp(-n\Phi_n(x_i))$ will deteriorate and this will be reflected by a deteriorating efficiency of the quadrature scheme---not in terms of the convergence rate w.r.t.~$N$, but w.r.t.~the constant in the error estimate, {as we will display later in examples}.

If the Gaussian Laplace approximation $\LAn$ of $\mu_n$ is used {as the \rev{prior} measure for numerical integration} instead of $\mu_0$, we get the following approximation
\[
	\int_{\bbR^d} f(x) \ \mu_n(\d x)
	\approx
	\frac {\sum_{i=1}^N w_i\ f(x_i) \frac{\pi_n(x_i)}{\tilde \pi_n(x_i)}}{\sum_{i=1}^N w_i\  \frac{\pi_n(x_i)}{\tilde \pi_n(x_i)}},
\]
where $\pi_n$ and $\tilde \pi_n$ denote the unnormalized Lebesgue density of $\mu_n$ and $\LAn$, respectively.
This time, we can not only apply well-known quadrature and sampling rules for Gaussian measures, but moreover, we also know due to Lemma \ref{lem:conv_LA}, that the ratio $\frac{\pi_n(x)}{\tilde \pi_n(x)}$ converges in mean w.r.t.~$\LAn$ to $1$.
Hence, we do not expect a deteriorating efficiency of the numerical integration as $n\to \infty$.
On the contrary, as we subsequently discuss for several numerical integration methods, their efficiency for a finite number of samples $N\in\bbN$ will even improve as $n\to\infty$ if they are based on the Laplace approximation $\LAn$.

For the sake of simplicity, we consider the simple case of $\Phi_n \equiv \Phi + \text{const}$ in the following presentation---nonetheless, the presented results can be extended to the general case given appropriate modifications of the assumptions.
Thus, we consider probability measures $\mu_n$ of the form
\begin{equation}\label{equ:mu_simple}
	\mu_n(\d x)
	\propto
	\e^{-n \Phi(x)} \ \mu_0(\d x)
\end{equation}
where we assume that $\Phi$ satisfies the assumptions of Theorem \ref{theo:laplace_method}.
However, when dealing with the Laplace approximation of $\mu_n$ and, particularly, with the ratios of the corresponding normalizing constants, it is helpful to use the following representation
\begin{equation}\label{equ:mu_simple_2}
	\mu_n(\d x)
	=
	\frac 1{Z_n} \e^{-n \Phi_n(x)} \mu_0(\d x),
	\qquad
	\Phi_n(x) \coloneqq \Phi(x) - \iota_n,
\end{equation}
where $\iota_n \coloneqq \min_{x \in \S_0} \Phi(x) - \frac 1n \log \pi_0(x)$ and $	Z_n = \e^{n \iota_n} \int_{\bbR^d} \e^{-n\Phi(x)} \pi_0(x)\ \d x$.
By this construction the resulting $I_n(x) \coloneqq \Phi_n(x) - \frac 1n \log \pi_0(x)$ satisfies $I_n(x_n) = 0$ as required in Assumption \ref{assum:LA_0} for the construction of the Laplace approximation $\LAn$.
\rev{Note, that for $\Phi_n = \Phi - \iota_n$ the Assumption \ref{assum:LA_0} and \ref{assum:LA_Conv} imply the assumptions of Theorem 1 for $f = \pi_0$ and $p=0$.}

%% ------------------------------------------------------------------
%% PREPARATIONS
%% ------------------------------------------------------------------
\paragraph{Preliminaries.} Before we start analyzing numerical methods based on the Laplace approximation as their reference measure, we take a closer look at the details of the asymptotic expansion for integrals provided in Theorem \ref{theo:laplace_method} and their implications for expectations w.r.t.~$\mu_n$ given in \eqref{equ:mu_simple_2}.
\begin{enumerate}
\item
\textbf{The coefficients:} The proof of Theorem \ref{theo:laplace_method} in \cite[Section IX.5]{Wong2001} provides explicit expressions\footnote{{There is a typo in \cite[Section IX.5]{Wong2001} stating that the sum in \eqref{equ:Laplace_coef} is taken over all $\valpha \in \bbN_ 0^d$ with $|\valpha| = k$.}} for the coefficients $c_k \in \bbR$ in the asymptotic expansion
\[
	\rev{
	\int_D f(x) \exp(-n \Phi(x)) \d x
	=
	\e^{-n \Phi(x_\star)} 
	n^{-d/2}
	\left(
	\sum_{k=0}^p c_k(f) n^{- k}
	+ \mc O\left(n^{-p-1}\right)
	\right),
	}
\]
namely---\rev{given that $f \in C^{2p+2}(D, \mathbb R)$ and $\Phi \in C^{2p+3}(D, \mathbb R)$}---that
\begin{equation} \label{equ:Laplace_coef}
	c_k(f)
	=
	\sum_{\valpha \in \bbN_ 0^d \colon |\valpha| = 2k}
	\frac{\kappa_{\valpha}}{\valpha !} D^{\valpha} F(0)
\end{equation}
where for $\valpha = (\alpha_1,\ldots,\alpha_d)$ we have $|\valpha| = \alpha_1+\cdots+\alpha_d$, $\valpha ! = \alpha_1 ! \cdots \alpha_d!$, $D^{\valpha} = D^{\alpha_1}_{x_1} \cdots D^{\alpha_d}_{x_d}$ and
\[
	F(x) \coloneqq f(h(x))\ \det(\nabla h(x))
\]
with $h\colon \Omega \to U(x_\star)$ being a diffeomorphism between $0 \in \Omega \subset \bbR^d$ and a particular neighborhood $U(x_\star)$ of $x_\star$ mapping $h(0) = x_\star$ and such that $\det(\nabla h(0)) = 1$.
The diffeomorphism $h$ is specified by the well-known Morse's Lemma and depends only on $\Phi$.
\rev{In particular, if $\Phi \in C^{2p+3}(D, \mathbb R)$, then  $h\in C^{2p+1}(\Omega, U(x_\star))$.
For the constants $\kappa_{\valpha} = \kappa_{\alpha_1}\cdots \kappa_{\alpha_d} \in\bbR$ we have $\kappa_{\alpha_i} = 0$ if $\alpha_i$ is odd and $\kappa_{\alpha_i} = (2/\lambda_i)^{(\alpha_i +1)/2} \Gamma((\alpha_i+1)/2)$ otherwise with $\lambda_i>0$ denoting the $i$th eigenvalue of $H_\star = \nabla^2 \Phi(x_\star)$. 
Hence, we get 
\begin{equation} \label{equ:Laplace_coef_2}
	c_k(f)
	=
	\sum_{\valpha \in \bbN_ 0^d \colon |\valpha| = k}
	\frac{\kappa_{2\valpha}}{(2\valpha)!} D^{2\valpha} F(0).
\end{equation}
}

\item
\textbf{The normalization constant of $\mu_n$:} Theorem \ref{theo:laplace_method} implies that 
\rev{if $\pi_0\in C^2(\bbR^d; \bbR)$ and $\Phi\in C^3(\bbR^d, \bbR)$, then}
\[
	\rev{
	\int_{\bbR^d} \pi_0(x) \ \exp(-n\Phi(x)) \ \d x 
	= \e^{-n\Phi(x_\star)} n^{-d/2} \left(\rev{\frac{(2\pi)^{d/2}\, \pi_0(x_\star)}{\sqrt{\det(H_\star)}}} 
	+ \mc O(n^{-1})\right).
	}
\]
Hence, we obtain for the normalizing constant $Z_n$ in \eqref{equ:mu_simple_2} that
\begin{equation}\label{equ:Zn_simple_asymp}
	Z_n 
	=
	\e^{n (\iota_n -\Phi(x_\star))} n^{-d/2} \left(\rev{\frac{(2\pi)^{d/2}\, \pi_0(x_\star)}{\sqrt{\det(H_\star)}}} + \mc O(n^{-1})\right).
\end{equation}
If we compare this to the normalizing constant $\tilde Z_n = n^{-d/2} \sqrt{\det(2\pi C_n)}$ of its Laplace approximation we get
\[
	\frac{Z_n}{\tilde Z_n} 
	= 
	\e^{n(\iota_n-\Phi(x_\star))} 
	\rev{\frac{\frac{\pi_0(x_\star)}{\sqrt{\det(H_\star)}} + \mc O(n^{-1})}{\sqrt{\det(C_n)}}}.
\]
We now show that 
\begin{equation}\label{equ:Ratio_Zn_asymp}
	\frac{Z_n}{\tilde Z_n} \rev{= 1 + \mc O(n^{-1})}.
\end{equation}
First, we get due to $C_n \to H_\star^{-1}$ that $\rev{\sqrt{\det(C_n)}\to \frac 1{\sqrt{\det(H_\star)}}}$ as $n\to \infty$.
Moreover, 
\[
	\e^{n(\iota_n-\Phi(x_\star))} 
	=
	\frac{\exp(n (\Phi(x_n) - \Phi(x_\star)))}{\pi_0(x_n)}.
\]
Since $x_n \to x_\star$ continuity implies $\pi_0(x_n) \to \pi_0(x_\star)$ as $n\to \infty$.
Besides that, the strong convexity of $\Phi$ in a neighborhood of $x_\star$---due to $\nabla^2 \Phi(X_\star) >0$ and $\Phi\in C^3(\bbR^d,\bbR)$---implies that for a $c>0$
\[
	\Phi(x_n) - \Phi(x_\star) \leq \frac 1{2c} \|\nabla\Phi(x_n)\|^2,
\]
also known as Polyak--\L ojasiewicz condition.
Because of
\[
	\nabla \Phi(x_n) = \frac 1n \nabla \log \pi_0(x_n),
\]
since $\nabla I_n(x_n) = 0$, we have that $|\Phi(x_n) - \Phi(x_\star)| \in \mc O(n^{-2})$, and hence,
\[
	\lim_{n\to\infty} \e^{n(\iota_n-\Phi(x_\star))} = 1/\pi_0(x_\star).
\]
This yields \eqref{equ:Ratio_Zn_asymp}.

\item
\textbf{The expectation w.r.t.~$\mu_n$:}
The expectation of a $f \in L^1_{\mu_0}(\bbR)$ w.r.t.~$\mu_n$ is given by
\[
	\evalt{\mu_n}{f} = \frac{ \int_{\S_0} f(x) \pi_0(x) \exp(-n\Phi(x))\ \d x}{ \int_{\S_0} \pi_0(x) \exp(-n\Phi(x))\ \d x}.
\]
If $f, \pi_0 \in C^2(\bbR^d, \bbR)$ and and $\Phi\in C^3(\bbR^d, \bbR)$, then we can apply the asymptotic expansion above to both integrals and obtain
\begin{align} \label{equ:Exp_mu}
	\evalt{\mu_n}{f} 
	& = \frac{\e^{-n \Phi(x_\star)} n^{-d/2}\ ( c_0(f\pi_0) + \mc O(n^{-1}))}{\e^{-n \Phi(x_\star)} n^{-d/2}\ ( c_0(\pi_0) + \mc O(n^{-1}))}
	= f(x_\star) + \mc O(n^{-1}).
\end{align}
If $f, \pi_0 \in C^4(\bbR^d; \bbR)$ and $\Phi\in C^5(\bbR^d, \bbR)$, then we can make this more precise by using the next explicit terms in the asymptotic expansions of both integrals, apply the rule for the division of (asymptotic) expansions (cf. \cite[Section 1.8]{Olver1997}) and obtain $\evalt{\mu_n}{f} = f(x_\star) + \tilde c_1(f,\pi_0) n^{-1} + \mc O(n^{-2})$ where $\tilde c_1(f,\pi_0) = \frac{1}{c_0(\pi_0)} c_1(f\pi_0) - \frac{c_1(\pi_0)}{c^2_0(\pi_0)}c_0(f\pi_0)$.

\item
\textbf{The variance w.r.t.~$\mu_n$:}
The variance of a $f \in L^2_{\mu_0}(\bbR)$ w.r.t.~$\mu_n$ is given by
\begin{align*}
	\Var_{\mu_n}(f)
	& = \evalt{\mu_n}{f^2} - \evalt{\mu_n}{f}^2.
\end{align*}
If $f, \pi_0 \in C^2(\bbR^d;\bbR)$ and $\Phi\in C^3(\bbR^d, \bbR)$, then we can exploit the result for the expectation w.r.t.~$\mu_n$ from above and obtain 
\begin{align} \label{equ:Var_mu}
	\Var_{\mu_n}(f)
	&
	=
	f^2(x_\star) + \mc O(n^{-1})
	- \left(f(x_\star) + \mc O(n^{-1})\right)^2
	\in
	\mc O(n^{-1}).
\end{align}
If $f,\pi_0\in C^4(\bbR^d, \bbR)$ and $\Phi\in C^5(\bbR^d, \bbR)$, then a straightforward calculation---see Appendix \ref{sec:Var_mu_n}--- using the explicit formulas for $c_1(f^2\pi_0)$ and $c_1(f\pi_0)$ as well as  $\nabla h(0) = I$ yields
\begin{align} \label{equ:Var_mu_2}
	\Var_{\mu_n}(f)
	=
	n^{-1} \|\nabla f(x_\star)\|^2_{H_\star^{-1}} + \mc O(n^{-2}).
\end{align}
Hence, the variance $\Var_{\mu_n}(f)$ decays like $n^{-1}$ provided that $\nabla f(x_\star) \neq 0$---otherwise it decays (at least) like $n^{-2}$.
\end{enumerate}

\begin{remark}\label{rem:Conv_xn}
As already exploited above, the assumptions of Theorem \ref{theo:laplace_method} imply that $\Phi$ is strongly convex in a neighborhood of $x_\star = \lim_{n\to \infty} x_n$, where $x_n = \argmin_{x\in\S_0} \Phi(x) - \frac 1n \log \pi_0(x)$.
This yields $|\Phi(x_n) - \Phi(x_\star)| \in \mc O(n^{-2})$, and thus
\begin{equation}\label{equ:Conv_xn}
    \|x_n - x_\star\| \in \mc O(n^{-1}).
\end{equation}
\end{remark}

%% ------------------------------------------------------------------
%% IMPORTANCE SAMPLING
%% ------------------------------------------------------------------
\subsection{Importance Sampling}\label{sec:LAIS}
Importance sampling is a variant of Monte Carlo integration where an integral w.r.t.~$\mu$ is rewritten as an integral w.r.t.~a dominating \emph{importance distribution} $\mu \ll \nu$, i.e.,
\[
	\int_{\bbR^d} f(x) \ \mu(\d x) = \int_{\bbR^d} f(x) \ \frac{\d\mu}{\d\nu}(x) \ \nu(\d x).
\]
The integral appearing on the righthand side is then approximated by Monte Carlo integration w.r.t.~$\nu$: given $N$ independent draws $x_i$, $i=1,\ldots,N$, according to $\nu$ we estimate
\[
	\int_{\bbR^d} f(x) \ \mu(\d x)
	\approx
	\frac 1N \sum_{i=1}^N w(x_i) f(x_i),
	\qquad
	w(x_i) \coloneqq \frac{\d\mu}{\d\nu}(x_i).
\]  
Often the density or \emph{importance weight function} $w = \frac{\d\mu}{\d\nu}\colon \bbR^d \to [0,\infty)$ is only known up to a normalizing constant $\tilde w \propto \frac{\d\mu}{\d\nu}$. 
In this case, we can use \emph{self-normalized importance sampling}
\[
	\int_{\bbR^d} f(x) \ \mu(\d x)
	\approx
	\frac {\sum_{i=1}^N \tilde w(x_i) \ f(x_i)}{\sum_{i=1}^N \tilde w(x_i)}
	\eqqcolon
	\mathrm{IS}^{(N)}_{\mu,\nu}(f).
\]  
As for Monte Carlo, there holds a strong law of large numbers (SLLN) for self-normalized importance sampling, i.e.,
\[
	\frac {\sum_{i=1}^N \tilde w(X_i) \ f(X_i)}{\sum_{i=1}^N \tilde w(X_i)}
	\xrightarrow[N\to\infty]{\text{a.s.}}
	\evalt{\mu}{f},
\]
where $X_i\sim \nu$ are i.i.d.~, which follows from the ususal SLLN and the continuous mapping theorem.
Moreover, by the classical central limit theorem (CLT) and Slutsky's theorem also a similar statement holds for self-normalized importance sampling: given that
\[
	\sigma^2_{\mu,\nu}(f)
	\coloneqq
	\evalt{\nu}{\left(\frac{\d \mu}{\d \nu}\right)^2 (f-\evalt{\mu}{f})^2}
	< \infty 
\]
we have
\[
	\sqrt{N} \left( 	\frac {\sum_{i=1}^N \tilde w(X_i) \ f(X_i)}{\sum_{i=1}^N \tilde w(X_i)} - \evalt{\mu}{f}\right)
	\xrightarrow[N\to\infty]{\mc D}
	\mc N(0, 	\sigma^2_{\mu,\nu}(f)).
\]
Thus, the asymptotic variance $\sigma^2_{\mu,\nu}(f)$ serves as a measure of efficiency for self-normalized importance sampling.
To ensure a finite $\sigma^2_{\mu,\nu}(f)$ for many functions of interest $f$, e.g., bounded $f$, the importance distribution $\nu$ has to have heavier tails than $\mu$ such that the ratio $\frac{\d\mu}{\d\nu}$ belongs to $L^2_\nu(\bbR)$, see also \cite[Section 3.3]{RobertCasella1999}.
Moreover, if we even have $\frac{\d\mu}{\d\nu} \in L^\infty_\nu(\bbR)$ we can bound
\begin{equation}\label{equ:var_ratio_IS}
	\sigma^2_{\mu,\nu}(f)
	\leq
	\left\|\frac{\d\mu}{\d\nu}\right\|_{L^\infty_\nu}\
	\evalt{\mu}{(f-\evalt{\mu}{f})^2}
	\qquad
	\Leftrightarrow
	\qquad
	\frac{\sigma^2_{\mu,\nu}(f)}{\Var_\mu(f)} 
	\leq
	\left\|\frac{\d\mu}{\d\nu}\right\|_{L^\infty_\nu},
\end{equation}
i.e., the ratio between the asymptotic variance of importance sampling w.r.t.~$\nu$ and plain Monte Carlo w.r.t.~$\mu$ can be bounded by the $L^\infty_\nu$- or supremum norm of the importance weight $\frac{\d\mu}{\d\nu}$.

For the measures $\mu_n$ a natural importance distribution (called $\nu$ above) which allows for direct sampling are the \rev{prior} measure $\mu_0$ and the Gaussian Laplace approximation $\LAn$.
We study the behaviour of the resulting asymptotic variances $\sigma^2_{\mu_n,\mu_0}(f)$ and $\sigma^2_{\mu_n,\LAn}(f)$ in the following.

\paragraph{Prior importance sampling.}
First, we consider $\mu_0$ as importance distribution. 
For this choice the importance weight function $w_n \coloneqq \frac{\d \mu_n}{\d\mu_0}$ is given by
\[
	w_n(x) = \frac 1{Z_n} \exp(-n \Phi_n(x)),
	\qquad
	x \in \S_0,
\]
with $\Phi_n(x) = \Phi(x)-\iota_n$, see \eqref{equ:mu_simple_2}. 
Concerning the bound in \eqref{equ:var_ratio_IS} we immediately obtain for sufficiently smooth $\pi_0$ and $\Phi$ by \eqref{equ:Zn_simple_asymp}, assuming w.l.o.g.~$\min_x \Phi(x) = \Phi(x_\star)=0$, that
\[
	\|w_n\|_{L^\infty}
	=
	Z^{-1}_n
	\e^{n\iota_n}
	=
	\tilde c n^{d/2},
	\qquad
	\tilde c > 0,
\]
explodes as $n\to\infty$.

Of course, that is just the deterioration of an upper bound, but in fact we can prove the following rather negative result \rev{where we use the notation $g(n) \sim h(n)$ for the asymptotic equivalence of functions of $n$, i.e., $g(n) \sim h(n)$ iff $\lim_{n\to\infty} \frac{g(n)}{h(n)} =1$.}

\begin{lemma}\label{lem:IS_prior}
\rev{
Given $\mu_n$ as in \eqref{equ:mu_simple_2} with $\Phi$ satisfying the assumptions of Theorem \ref{theo:laplace_method} for $p=1$ and $\pi_0 \in C^4(\bbR^d,\bbR)$ with $\pi_0(x_\star) \neq 0$, we have for any $f\in C^4(\bbR^d, \bbR)\cap L^1_{\mu_0}(\bbR)$ with $\nabla f(x_\star) \neq 0$ that}
\[
	\sigma^2_{\mu_n,\mu_0}(f)  
	\sim \rev{\tilde c_f}n^{d/2 -1},
	\qquad
	\rev{\tilde c_f > 0},
\]
which yields \rev{$\frac{\sigma^2_{\mu_n,\mu_0}(f)}{\Var_{\mu_n}(f)} \sim \tilde c_f n^{d/2}$ for another $\tilde c_f > 0$}.
\end{lemma}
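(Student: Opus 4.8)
The starting point is to write the asymptotic variance explicitly as a Laplace-type integral. Since $\frac{\d\mu_n}{\d\mu_0}(x) = Z_n^{-1}\e^{-n\Phi_n(x)}$ with $\Phi_n = \Phi - \iota_n$, I would first record, writing $m_n \coloneqq \evalt{\mu_n}{f}$,
\[
	\sigma^2_{\mu_n,\mu_0}(f)
	= \frac{1}{Z_n^2}\int_{\S_0}\e^{-2n\Phi_n(x)}\,(f(x)-m_n)^2\,\pi_0(x)\,\d x
	= \frac{\e^{2n\iota_n}}{Z_n^2}\int_{\S_0}\e^{-2n\Phi(x)}\,(f(x)-m_n)^2\,\pi_0(x)\,\d x.
\]
The crucial observation is that the remaining integral is again of Laplace type, now with the doubled large parameter $2n$ but the same phase $\Phi$, so Theorem \ref{theo:laplace_method} applies with the same minimizer $x_\star$ and Hessian $H_\star$. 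The exponential prefactor $\e^{-2n\Phi(x_\star)}$ it produces will cancel against $\e^{2n\iota_n}/Z_n^2$ once the asymptotics \eqref{equ:Zn_simple_asymp} of $Z_n$ are inserted, so the whole effort reduces to determining the algebraic order of the Laplace integral.

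Next I would exploit $m_n = f(x_\star) + \mc O(n^{-1})$ from \eqref{equ:Exp_mu}. Setting $\epsilon_n \coloneqq m_n - f(x_\star) = \mc O(n^{-1})$ and expanding
\[
	(f-m_n)^2 = (f-f(x_\star))^2 - 2\epsilon_n\,(f-f(x_\star)) + \epsilon_n^2,
\]
the integral splits into three Laplace integrals with integrands $g_0 \coloneqq (f-f(x_\star))^2\pi_0$, $g_1 \coloneqq (f-f(x_\star))\pi_0$ and $g_2 \coloneqq \pi_0$. The key point, and the reason the rate is $n^{d/2-1}$ rather than $n^{d/2}$, is that $g_0$ and $g_1$ \emph{vanish} at $x_\star$, so their leading Laplace coefficient $c_0$ (proportional to the integrand's value at $x_\star$) is zero and their contribution starts only at order $(2n)^{-d/2-1}$. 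Since $g_0$ additionally has a vanishing gradient at $x_\star$ (a double zero), its $c_1$ coefficient survives and dominates; the $g_1$ term carries the extra factor $\epsilon_n = \mc O(n^{-1})$ and the $g_2$ term the factor $\epsilon_n^2 = \mc O(n^{-2})$, so both are of strictly lower order. Applying Theorem \ref{theo:laplace_method} with $p=1$ (legitimate because $g_0 \in C^4 = C^{2p+2}$ under the stated smoothness of $f$ and $\pi_0$) then yields
\[
	\int_{\S_0}\e^{-2n\Phi}\,(f-m_n)^2\,\pi_0\,\d x
	= \e^{-2n\Phi(x_\star)}(2n)^{-d/2-1}\bigl(c_1(g_0) + \mc O(n^{-1})\bigr).
\]

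It then remains to show $c_1(g_0) > 0$. Using the explicit formula \eqref{equ:Laplace_coef_2} with $G_0(x) = g_0(h(x))\det\nabla h(x)$ and $h(0)=x_\star$, $\nabla h(0)=I$, the only surviving second derivatives $D^{2\valpha}G_0(0)$ with $|\valpha|=1$ reduce, because $g_0(x_\star)=0$ and $\nabla g_0(x_\star)=0$ kill the chain-rule and Jacobian corrections, to the diagonal entries of $\nabla^2 g_0(x_\star) = 2\pi_0(x_\star)\,\nabla f(x_\star)\nabla f(x_\star)^\top$. As the weights $\kappa_{2\valpha}$ are strictly positive, this gives $c_1(g_0) = \pi_0(x_\star)\sum_i \kappa_{2\ve_i}\,(\partial_i f(x_\star))^2 > 0$ precisely when $\nabla f(x_\star)\neq 0$. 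Finally, inserting $Z_n^2 = \e^{2n(\iota_n-\Phi(x_\star))}n^{-d}\bigl(c_0(\pi_0)^2 + \mc O(n^{-1})\bigr)$ from \eqref{equ:Zn_simple_asymp} makes the exponentials cancel and combines the powers $n^{d}\cdot(2n)^{-d/2-1}$ into $n^{d/2-1}$, yielding $\sigma^2_{\mu_n,\mu_0}(f)\sim \tilde c_f\, n^{d/2-1}$ with $\tilde c_f = 2^{-d/2-1}c_1(g_0)/c_0(\pi_0)^2 > 0$; dividing by $\Var_{\mu_n}(f)\sim n^{-1}\|\nabla f(x_\star)\|^2_{H_\star^{-1}}$ from \eqref{equ:Var_mu_2} gives the stated ratio $\sim \tilde c_f\, n^{d/2}$. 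The main obstacle is exactly the vanishing of the leading Laplace coefficient: one must push the expansion to the next order and verify that the $n$-dependent recentering by $m_n$ produces only lower-order corrections, which is what simultaneously pins down the exponent $d/2-1$ and the strict positivity of the constant.
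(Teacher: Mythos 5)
Your proof is correct and follows essentially the same route as the paper's: the same rewriting of $\sigma^2_{\mu_n,\mu_0}(f)$ as a Laplace-type integral with doubled parameter, the same three-term decomposition (the paper simply normalizes $f(x_\star)=0$ first, so your $g_0$, $\epsilon_n g_1$, $\epsilon_n^2 g_2$ contributions are exactly its $J_1$, $J_2$, $J_3$), and the same positivity argument for the surviving coefficient via \eqref{equ:Laplace_coef_2}. The only slight imprecision is your claim $\nabla h(0) = I$ (in general one only has $\det(\nabla h(0)) = 1$ with $\nabla h(0)$ regular), but, exactly as in the paper's proof, regularity of $\nabla h(0)$ together with $\nabla f(x_\star)\neq 0$ already yields $c_1(g_0)>0$, so the conclusion is unaffected.
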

\begin{proof}
W.l.o.g.~we may assume that $f(x_\star) = 0$, since $\sigma^2_{\mu_n,\mu_0}(f) = \sigma^2_{\mu_n,\mu_0}(f-c)$ for any $c\in\bbR$.
Moreover, for simplicity we assume w.l.o.g.~that $\Phi(x_\star) = 0$.
We study 
\begin{align*}
	\sigma^2_{\mu_n,\mu_0}(f)
	& =
	\frac 1{Z_n^2}
	\int_{\S_0}
	\e^{- 2n\Phi_n(x)}
	\ (f(x) - \evalt{\mu_n}{f})^2
	\ \mu_0(\d x)\\
	& =
	\frac 1{\e^{-2n\iota_n} Z_n^2}
	\int_{\S_0}
	\e^{- 2n\Phi(x)}
	\ (f(x) - \evalt{\mu_n}{f})^2
	\ \mu_0(\d x)
\end{align*}
by analyzing the growth of the numerator and denominator w.r.t.~$n$.
Due to the preliminaries presented above we know that \rev{$\e^{-2n\iota_n}Z^2_n = c_0^2 n^{-d} + \mc O(n^{-d-1})$} with $c_0 = \rev{(2\pi)^{d/2}\, \pi_0(x_\star)/\sqrt{\det(H_\star)}} >0$. 
Concerning the numerator we start with decomposing
\[
	\int_{\S_0}
	\e^{- 2n\Phi(x)}
	\ (f(x) - \evalt{\mu_n}{f})^2
	\ \mu_0(\d x)
	=
	J_1(n) - 2 J_2(n) + J_3(n)
\]
where this time
\begin{align*}
	J_1(n) & \coloneqq \int_{\S_0} f^2(x)	\e^{- 2n\Phi(x)} \ \mu_0(\d x),\\
	J_2(n) & \coloneqq \evalt{\mu_n}{f} \int_{\S_0} f(x) \e^{- 2n\Phi(x)} \ \mu_0(\d x),\\
	J_3(n) & \coloneqq \evalt{\mu_n}{f}^2 \int_{\S_0} \e^{- 2n\Phi(x)} \ \mu_0(\d x).
\end{align*}
We derive now asymptotic expansions of these terms based on Theorem \ref{theo:laplace_method}.
It is easy to see that the assumptions of Theorem \ref{theo:laplace_method} are also fulfilled when considering integrals w.r.t.~$\e^{- 2n\Phi}$.
We start with $J_1$ and obtain due to $f(x_\star) = 0$ that
\begin{align*}
	J_1(n) 
	& = \int_{\S_0} f^2(x) \e^{- 2n\Phi(x)} \ \mu_0(\d x)
	=
	c'_{1}(f^2\pi_0) n^{-d/2-1} + \mc O(n^{-d/2-2})
\end{align*}
where $c'_{1}(f^2\pi_0) \in \bbR$ is the same as $c_1(f^2\pi_0)$ in \eqref{equ:Laplace_coef} but for $2\Phi$ instead of $\Phi$.

Next, we consider $J_2$ and recall that due to $f(x_\star) = 0$ we have $\evalt{\mu_n}{f} \in \mc O(n^{-1})$, see \eqref{equ:Exp_mu}. 
Furthermore, $f(x_\star) = 0$ also implies $\int_{\S_0} f(x) \pi_0(x)\, \e^{- 2n\Phi(x)}\ \d x \in \mc O(n^{-1-d/2})$, see Theorem \ref{theo:laplace_method}.
Thus, we have
\begin{align*}
	J_2(n) 
	= \evalt{\mu_n}{f} \int_{\S_0} f(x) \e^{- 2n\Phi(x)} \ \mu_0(\d x)
	& \in \mc O\left(n^{-d/2-2} \right).
\end{align*}
Finally, we take a look at $J_3$. 
By Theorem \ref{theo:laplace_method} we have $\int_{\S_0} \exp(- 2n\Phi(x)) \ \mu_0(\d x) \in \mc O(n^{-d/2})$ and, hence, obtain
\begin{align*}
	J_3(n) = \evalt{\mu_n}{f}^2 \int_{\S_0} \exp(- 2n\Phi(x)) \ \mu_0(\d x) 
	\, \in \mc O(n^{-2-d/2}).
\end{align*}
Hence, $J_1$ has the dominating power w.r.t.~$n$ and we have that
\begin{align*}
	\int_{\S_0}
	\e^{- 2n\Phi(x)}
	\ (f(x) - \evalt{\mu_n}{f})^2
	\ \mu_0(\d x)
	& \sim c'_{1}(f^2\pi_0) n^{-d/2-1}.
\end{align*}
At this point, we remark that due to the assumption $\nabla f(x_\star) \neq 0$ we have $c'_{1}(f^2\pi_0) \neq 0$: we know by \eqref{equ:Laplace_coef_2} that  $c'_{1}(f^2\pi_0) =\frac 12 \sum_{j=1}^d \kappa_{2\ve_j} D^{2\ve_j} F(0)$ where $F(x) = \pi_0(h'(x)) f^2(h'(x)) \det(\nabla h'(x))$ and $h'$ denotes the diffeomorphism for $2\Phi$ appearing in Morse's lemma and mapping $0$ to $x_\star$;
applying the product formula and using $f(x_\star)=0$ as well as $\det(\nabla h'(0)) = 1$ we get that $D^{2\ve_j} F(x_\star) = \pi_0(x_\star) D^{2\ve_j} ( f^2(h'(x_\star)) )$; similarly, we get using $f(x_\star)=0$ that $D^{2\ve_j} ( f^2(h'(x_\star))) = 2 | \ve_j^\top \nabla h'(0) \nabla f(x_\star)|^2$; since $h'$ is a diffeomorphishm $\nabla h'(0)$ is regular and, thus, $c'_{1}(f^2\pi_0) \neq 0$.
The statement follows now by
\[
	\sigma^2_{\mu_n,\mu_0}(f)
	=
	\frac{c'_{1}(f^2\pi_0) n^{-d/2-1} + \mc O(n^{-d/2-2})}{c_0^2 n^{-d} + \mc O(n^{-d-1})}
	\sim \frac{c'_{1}(f^2\pi_0)}{c_0^2} \ n^{d/2-1}
\]
and by recalling that $\Var_{\mu_n}(f) \sim c n^{-1}$ because of $\nabla f(x_\star) \neq 0$, see \eqref{equ:Var_mu_2}.
\end{proof}
Thus, Lemma \ref{lem:IS_prior} tells us that the asymptotic variance of importance sampling for $\mu_n$ with the \rev{prior} $\mu_0$ as importance distribution grows like $n^{d/2-1}$ as $n\to \infty$ for a large class of integrands. 
Hence, its efficiency deteriorates like $n^{d/2-1}$ for $d\geq 3$ as the target measures $\mu_n$ become more concentrated.

\paragraph{Laplace-based importance sampling.}
We now consider the Laplace approximation $\LAn$ as importance distribution which yields the following importance weight function
\begin{equation}\label{equ:LAIS_weight}
	w_n(x) 
	\coloneqq \frac{\d\mu_n}{\d\LAn}(x)
	=
	\frac {\tilde Z_n}{Z_n} \exp(-n R_n(x)) \textbf 1_{\S_0}(x),
	\qquad
	x \in \bbR^d,
\end{equation}
with $R_n(x) = I_n(x) - \tilde I_n(x) = I_n(x_n) - \frac 12 \|x-x_n\|^2_{C_n^{-1}}$ for $x\in\S_0$.
In order to ensure $w_n \in L^2_{\LAn}(\bbR)$ we need that
\[
	\evalt{\LAn}{\exp(-2n R_n)}
	= 
	\frac{1}{\tilde Z_n} \int_{\S_0} \exp(- n[2I_n(x) - \tilde I_n(x)])\ \d x
	< \infty.
\]
Despite pathological counterexamples a sound requirement for $w_n \in L^2_{\LAn}(\bbR)$ is that
\[
	{\lim_{\|x\|\to\infty}  2I_n(x) - \tilde I_n(x) = +\infty,}
\]
for example by assuming that there exist  $\delta, c_1 > 0$, $c_0 > 0$, and $n_0 \in \bbN$ such that 
\begin{equation}\label{equ:cond_In_IS}
	{I_n(x) \geq c_1 \|x\|^{2+\delta} + c_0,}
	\qquad
	\forall x \in\S_0\ \forall n\geq n_0.
\end{equation}
If the Lebesgue density $\pi_0$ of $\mu_0$ is bounded, then \eqref{equ:cond_In_IS} is equivalent to the existence of $n_0$ and a $\tilde c_0$ such that
\[
	{\Phi_n(x) \geq c_1 \|x\|^{2+\delta} + \tilde c_0,}
	\qquad
	\forall x \in\S_0\ \forall n\geq n_0.
\]
Unfortunately, condition \eqref{equ:cond_In_IS} is not enough to ensure a well-behaved asymptotic variance $\sigma^2_{\mu_n,\LAn}(f)$ as $n\to \infty$, since
\[
	\|w_n\|_{L^\infty}
	=
	\frac{\tilde Z_n}{Z_n}
	\exp(- n \min_{x\in\S_0} R_n(x)).
\]
Although, we know due to \eqref{equ:Ratio_Zn_asymp} that $\frac{\tilde Z_n}{Z_n} \to 1$ as $n\to \infty$, the supremum norm of the importance weight $w_n$ of Laplace-based importance sampling will explode exponentially with $n$ if $\min_x R_n(x) < 0$.
This can be sharpened to proving that even the asymptotic variance of Laplace-based importance sampling w.r.t.~$\mu_n$ as in \eqref{equ:mu_simple_2} deteriorates exponentially as $n\to\infty$ for many functions $f\colon \bbR^d\to \bbR$ if  
\[
	\exists x \in \S_0 \colon \Phi(x) < \frac 12 \Phi(x_\star) + \frac 14 \|x - x_\star\|^2_{H^{-1}_\star}
\]
by means of Theorem \ref{theo:laplace_method} applied to 
\[
	\int_{\bbR^d} (f(x) - \evalt{\mu_n}{f})^2 \ \exp(- n[2I_n(x) - \tilde I_n(x)]) \ \d x.
\]
This means, except when $\Phi$ is basically strongly convex, the asymptotic variance of Laplace-based important sampling can explode exponentially or not even exist as $n$ increases.
However, in the good case, so to speak, we obtain the following.

\begin{proposition}\label{propo:LAIS_conv_L2}
Consider the measures $\mu_n$ as in \eqref{equ:mu_simple_2} \rev{with $\Phi_n = \Phi - \iota_n$ and $\pi_0$ satisfying Assumption \ref{assum:LA_0} and \ref{assum:LA_Conv}.}
If there exist an $n_0\in\bbN$ such that for all $n\geq n_0$ we have
\begin{equation} \label{equ:In_strong_convex}
	I_n(x) \geq I_n(x_n) + \frac 12 (x-x_n)^\top \nabla^2 I_n(x_n) (x-x_n) \qquad \forall x \in \S_0,
\end{equation}
then for any $f \in L^2_{\mu_0}(f)$
\[
	\lim_{n\to\infty} 
	\frac{\sigma^2_{\mu_n,\LAn}(f)}{\Var_{\mu_n}(f)}
	= 1.
\]
\end{proposition}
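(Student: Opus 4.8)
The plan is to isolate the single quantity that governs everything, namely the deviation of the importance weight $w_n=\d\mu_n/\d\LAn$ from $1$, measured against the centred integrand. Set $m_n\coloneqq\evalt{\mu_n}{f}$ and $\rho_n\coloneqq\pi_n/\tilde\pi_n=\exp(-nR_n)$ with $R_n\coloneqq I_n-\tilde I_n$, so that $w_n=\tfrac{\tilde Z_n}{Z_n}\rho_n$ and $\rho_n\,\LAn(\d x)=\tfrac{Z_n}{\tilde Z_n}\,\mu_n(\d x)$. Starting from $\sigma^2_{\mu_n,\LAn}(f)=\evalt{\LAn}{w_n^2(f-m_n)^2}$ and $\Var_{\mu_n}(f)=\int(f-m_n)^2\,\mu_n(\d x)$, a direct substitution gives
\[
	\frac{\sigma^2_{\mu_n,\LAn}(f)}{\Var_{\mu_n}(f)}
	=\frac{\tilde Z_n}{Z_n}\cdot\frac{\int_{\bbR^d}\rho_n^2\,(f-m_n)^2\,\LAn(\d x)}{\int_{\bbR^d}\rho_n\,(f-m_n)^2\,\LAn(\d x)}.
\]
Since $\tilde Z_n/Z_n\to1$ by \eqref{equ:Ratio_Zn_asymp}, it remains to show that the fraction on the right tends to $1$.

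The hypothesis \eqref{equ:In_strong_convex} is exactly what places us in the well-behaved, bounded-weight regime. Because $\nabla^2 I_n(x_n)=C_n^{-1}$ and $I_n(x_n)=0$, inequality \eqref{equ:In_strong_convex} reads $I_n\ge\tilde I_n$ on $\S_0$, i.e.\ $R_n\ge0$ and hence $0<\rho_n\le1$. Writing the numerator minus the denominator of the fraction as $\int\rho_n(\rho_n-1)(f-m_n)^2\,\LAn(\d x)$ and using $|\rho_n(\rho_n-1)|=\rho_n(1-\rho_n)\le 1-\rho_n$, I reduce the claim to
\[
	\frac{\displaystyle\int_{\bbR^d}(1-\rho_n)\,(f-m_n)^2\,\LAn(\d x)}{\displaystyle\int_{\bbR^d}\rho_n\,(f-m_n)^2\,\LAn(\d x)}\xrightarrow[n\to\infty]{}0.
\]
The denominator equals $\tfrac{Z_n}{\tilde Z_n}\Var_{\mu_n}(f)$ and is therefore, by \eqref{equ:Var_mu_2}, of order exactly $n^{-1}$ when $\nabla f(x_\star)\ne0$ (otherwise it is of a known faster polynomial order and the estimates below adapt). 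So it suffices to bound the numerator by $o(n^{-1})$.

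For the numerator I split $\bbR^d$ into a ball $B_r(x_n)$ and its complement. On $B_r(x_n)$ I use that $\tilde I_n=T_2 I_n(x_n)$ forces $R_n(x_n)=0$, $\nabla R_n(x_n)=0$, $\nabla^2R_n(x_n)=0$, so the uniform third-derivative bounds of Assumption \ref{assum:LA_Conv} give $|R_n(x)|\le C\|x-x_n\|^3$, whence $1-\rho_n\le nR_n\le Cn\|x-x_n\|^3$ there. Combining Cauchy--Schwarz with the exact Gaussian moment identities $\int\|x-x_n\|^{2k}\,\LAn(\d x)=\mc O(n^{-k})$ yields
\[
	\int_{B_r(x_n)}(1-\rho_n)(f-m_n)^2\,\LAn(\d x)
	\le Cn\Big(\int_{\bbR^d}\|x-x_n\|^{6}\,\LAn(\d x)\Big)^{1/2}\Big(\int_{\bbR^d}(f-m_n)^{4}\,\LAn(\d x)\Big)^{1/2}
	=\mc O(n^{-3/2}),
\]
where the last factor is $\mc O(n^{-1})$ by a Gaussian moment computation of the kind behind \eqref{equ:Var_mu_2}. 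On $B_r^c(x_n)$ I use $1-\rho_n\le1$ together with Cauchy--Schwarz and the Gaussian tail bound $\LAn(B_r^c(x_n))\le\exp(-cnr^2)$, so that $\int_{B_r^c}(f-m_n)^2\,\LAn(\d x)$ decays exponentially in $n$; alternatively the lower bound $\inf_{x\notin B_r(x_n)}I_n(x)\ge\delta_r$ of Assumption \ref{assum:LA_Conv} controls the analogous $\mu_n$-tail using only $f\in L^2_{\mu_0}$. Dividing the $\mc O(n^{-3/2})$ numerator by the $n^{-1}$ denominator gives a ratio of order $n^{-1/2}\to0$, which finishes the argument.

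The delicate point is the interior estimate. A naive bound such as $\|x-x_n\|^3\le\tfrac12(\|x-x_n\|^2+\|x-x_n\|^4)$ only produces the even-moment order $n^{-1}$, which after multiplication by $n$ fails to beat the $n^{-1}$ denominator; the sharp order $n^{-3/2}$ is recovered only by pairing a high-degree even moment ($\|x-x_n\|^6$) with the fourth moment of the centred integrand through Cauchy--Schwarz, capturing the extra half power of $n$. A secondary subtlety is that $R_n$, $x_n$ and $m_n$ are all $n$-dependent and $\|x-x_n\|^3$ is non-smooth; this is precisely why I integrate against the Gaussian $\LAn$, whose moments are explicit, rather than against $\mu_n$, routing the whole estimate through the $L^p(\LAn)$-control already supplied by Lemma \ref{lem:conv_LA}. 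Finally, assumption \eqref{equ:In_strong_convex} enters in two essential ways---it bounds the weights ($\rho_n\le1$) and it yields the one-sided comparison $|\rho_n(\rho_n-1)|\le1-\rho_n$---without which the asymptotic variance can blow up exponentially, as discussed before the statement.
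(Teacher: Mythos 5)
Your reduction of the ratio to $\frac{\tilde Z_n}{Z_n}\cdot\frac{\int\rho_n^2(f-m_n)^2\,\d\mathcal L_{\mu_n}}{\int\rho_n(f-m_n)^2\,\d\mathcal L_{\mu_n}}$ is algebraically correct, and the observation that \eqref{equ:In_strong_convex} forces $R_n\ge 0$, hence $\rho_n\le 1$, is exactly the right use of the hypothesis. The genuine gap is that everything after that silently requires far more of $f$ than the statement grants. The proposition assumes only $f\in L^2_{\mu_0}$ (the paper's motivating examples are indicator functions), yet your denominator estimate invokes \eqref{equ:Var_mu_2}, which needs $f,\pi_0\in C^4(\bbR^d,\bbR)$, $\Phi\in C^5(\bbR^d,\bbR)$ and $\nabla f(x_\star)\neq 0$; likewise your claim $\bigl(\int(f-m_n)^4\,\d\LAn\bigr)^{1/2}=\mc O(n^{-1})$ needs $f$ differentiable at $x_\star$ and $f\in L^4(\LAn)$, neither of which follows from $f\in L^2_{\mu_0}$ (note also that Assumption \ref{assum:LA_Conv} only provides $C^3$ regularity of $\Phi_n,\pi_0$). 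Worse, the escape clause for $\nabla f(x_\star)=0$ ("the estimates below adapt") is false, not merely unproven: take $\pi_0$ standard Gaussian, $\Phi(x)=\tfrac12\|x\|^2+\psi(x)$ with $\psi\ge 0$ smooth and supported away from $x_\star=0$ (so \eqref{equ:In_strong_convex} holds), and $f$ smooth, bounded, vanishing near $x_\star$ and equal to $1$ far out. Then $\Var_{\mu_n}(f)$ is exponentially small and is carried entirely by a region where $w_n=\tfrac{\tilde Z_n}{Z_n}\e^{-nR_n}$ is itself exponentially small, so $\sigma^2_{\mu_n,\LAn}(f)/\Var_{\mu_n}(f)\to 0$, not $1$; your numerator-to-denominator comparison diverges there and cannot be repaired, because the two-sided limit you are trying to prove actually fails for such $f$. (A smaller slip: after replacing $|\rho_n(\rho_n-1)|$ by $1-\rho_n$ your numerator integrates $(f-m_n)^2$ over $\S_0^c$, where $\rho_n=0$ but $f$ need not even be defined or integrable; keep the factor $\rho_n$ or restrict the integral to $\S_0$.)

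For comparison, the paper's own proof is three lines and purely one-sided: \eqref{equ:In_strong_convex} gives $R_n\ge 0$, hence $\|w_n\|_{L^\infty_{\LAn}}=\tilde Z_n/Z_n$, and then the general bound \eqref{equ:var_ratio_IS} together with \eqref{equ:Ratio_Zn_asymp} yields $\sigma^2_{\mu_n,\LAn}(f)/\Var_{\mu_n}(f)\le \tilde Z_n/Z_n\to 1$. This works for every $f\in L^2_{\mu_0}$ but establishes only $\limsup_{n\to\infty}\sigma^2_{\mu_n,\LAn}(f)/\Var_{\mu_n}(f)\le 1$ — the practically relevant "no deterioration" direction — and never addresses a matching lower bound. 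The upper-bound half of your framework ($\rho_n^2\le\rho_n$ plus $\tilde Z_n/Z_n\to 1$) reproduces precisely this content by a slightly longer route; all of your extra machinery is spent on the lower bound, which, by the counterexample above, is not a theorem under the stated hypotheses and could only be salvaged under the additional conditions your argument implicitly uses ($f$ smooth, $f\in L^4(\LAn)$ uniformly, $\nabla f(x_\star)\neq 0$), in which case your interior/tail splitting with Cauchy–Schwarz would indeed go through.
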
 
\begin{proof}
The assumption \eqref{equ:In_strong_convex} ensures that $R_n(x) = I_n(x) - \tilde I_n(x) \geq 0$ for each $x\in\S_0$.
Thus, 
\[
	\|w_n\|_{L^\infty}
	=
	\frac{\tilde Z_n}{Z_n}
\]
and the assertion follows by \eqref{equ:var_ratio_IS} and the fact that $\lim_{n\to\infty} \frac{\tilde Z_n}{Z_n} = 1$ due to \eqref{equ:Ratio_Zn_asymp}.
\end{proof}

Condition \eqref{equ:In_strong_convex} is for instance satisfied, if $I_n$ is strongly convex with a constant $\gamma \geq \lambda_{\min}(\nabla^2 I_n(x_n))$ where the latter denotes the smallest eigenvalue of the positive definite Hessian $\nabla^2 I_n(x_n)$. 
However, this assumption or even \eqref{equ:In_strong_convex} is quite restrictive and, probably, hardly fulfilled for many interesting applications.
Moreover, the success in practice of Laplace-based importance sampling is well-documented.
How come that despite a possible infinite asymptotic variance Laplace-based importance sampling performs that well?
In the following we refine our analysis and exploit the fact that the Laplace approximation concentrates around the minimizer of $I_n$.
Hence, with an increasing probability samples drawn from the Laplace approximation are in a small neighborhood of the minimizer.
Thus, if $I_n$ is, e.g., only locally strongly convex---which the assumptions of Theorem \ref{theo:conv_H} actually imply---then with a high probability the mean squared error might be small.
We clarify these arguments in the following and present a positive result for Laplace-based importance sampling under mild assumptions but for a weaker error criterion than the decay of the mean squared error.

First we state a concentration result for $N$ samples drawn from $\LAn$ which is an immediate consequence of Proposition \ref{propo:con_LA_Phi_n}.

\begin{proposition}\label{propo:LAIS_conc}
Let $N\in\bbN$ be arbitrary and let $X^{(n)}_i \sim \LAn$ be i.i.d. where $i = 1,\ldots,N$.
Then, for a sequence of radii  $r_n \geq r_0 n^{-q}>0$, $n\in\bbN$, with $q \in (0, 1/2)$ we have
\[
	\bbP\left( \max_{i = 1,\ldots, N} \|X^{(n)}_i - x_n\| \leq r_n \right) = 1 - \e^{- c_0 N n^{1-2q}} \xrightarrow[n\to\infty]{} 1.
\]
\end{proposition}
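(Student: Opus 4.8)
The plan is to decouple the maximum over the $N$ i.i.d.\ draws into a single-sample tail estimate and then control that tail using the boundedness of the Laplace covariance $C_n$. Since $X^{(n)}_1,\dots,X^{(n)}_N$ are i.i.d.\ with law $\LAn$ and the radii $r_n$ are deterministic, I would first write
\[
	\bbP\Big(\max_{i=1,\dots,N}\|X^{(n)}_i-x_n\|\le r_n\Big)
	= (1-p_n)^N \ge 1 - N p_n,
	\qquad
	p_n := \bbP\big(\|X^{(n)}-x_n\| > r_n\big),
\]
where $X^{(n)}\sim\LAn$ and the inequality is Bernoulli's (equivalently a union bound on the complementary event). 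Thus the whole statement reduces to showing $p_n \le \e^{-c_0 n^{1-2q}}$ for all large $n$; the fixed factor $N$ is immaterial to the limit.

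For the single-sample tail I would use the Gaussian representation $X^{(n)}-x_n = n^{-1/2} C_n^{1/2} Z$ with $Z\sim\mc N(0,I_d)$, giving
\[
	\|X^{(n)}-x_n\| \le n^{-1/2}\,\lambda_{\max}(C_n)^{1/2}\,\|Z\|.
\]
By the remark following Assumption \ref{assum:LA_Conv} we have $C_n \to H_\star^{-1}$, so $\lambda_{\max}(C_n)\le \Lambda<\infty$ uniformly for $n$ large. Combined with $r_n \ge r_0 n^{-q}$ this yields
\[
	p_n \le \bbP\!\left(\|Z\|^2 > \frac{n\,r_n^2}{\lambda_{\max}(C_n)}\right)
	\le \bbP\!\left(\|Z\|^2 > \frac{r_0^2}{\Lambda}\, n^{1-2q}\right),
\]
and since $q<1/2$ the threshold $s_n := (r_0^2/\Lambda)\,n^{1-2q}$ is a genuine positive power of $n$ tending to $\infty$. (Alternatively, this single-sample bound can simply be quoted from Proposition \ref{propo:con_LA_Phi_n}, which is exactly why the present statement is an immediate consequence of it.)

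Finally I would invoke a $\chi^2_d$ tail bound — for instance $\bbP(\|Z\|^2 > s)\le C_d\, s^{d/2-1}\e^{-s/2}$ for large $s$, or the Laurent--Massart inequality — to conclude that $p_n \le \e^{-c_0 n^{1-2q}}$ for $n$ large, with $c_0>0$ chosen slightly below $r_0^2/(2\Lambda)$ so as to absorb the polynomial prefactor into the exponential rate. Inserting this into the first display gives $\bbP(\max_i\|X^{(n)}_i-x_n\|\le r_n)\ge 1 - N\e^{-c_0 n^{1-2q}}\to 1$, matching the claim. The argument is essentially routine; the only two points needing care are the uniform-in-$n$ bound $\lambda_{\max}(C_n)\le\Lambda$ (supplied precisely by $C_n\to H_\star^{-1}$) and ensuring the exponential decay dominates the polynomial chi-square factor so that the full rate $n^{1-2q}$ survives — which is exactly where the restriction $q<1/2$ enters.
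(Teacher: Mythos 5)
Your argument is correct, and its skeleton --- reduce the maximum over the $N$ i.i.d.\ samples to a single-sample tail via $(1-p_n)^N \ge 1 - Np_n$, then show $p_n \le \e^{-c_0 n^{1-2q}}$ --- is exactly what the paper intends when it declares the proposition ``an immediate consequence of Proposition \ref{propo:con_LA_Phi_n}''. Where you genuinely differ is the tool for the single-sample tail: the paper proves Proposition \ref{propo:con_LA_Phi_n} in Appendix \ref{sec:TIS} via the Borell--TIS concentration inequality, $\bbP\left( \left| \|X_n\| - \ev{\|X_n\|}\right| > r \right) \leq 2\exp\left(-r^2/(2\sigma_n^2)\right)$ with $\sigma_n^2 = \lambda_{\max}(n^{-1}C_n)$, combined with $\ev{\|X_n\|} \le n^{-1/2}\sqrt{\tr C_n}$, whereas you use the explicit representation $X^{(n)}-x_n = n^{-1/2}C_n^{1/2}Z$ and a $\chi^2_d$ tail bound, absorbing the polynomial prefactor into a slightly smaller exponential rate. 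Both routes rest on the same uniform bound $\lambda_{\max}(C_n)\le\Lambda$ supplied by $C_n\to H_\star^{-1}$, and both deliver the rate $n r_n^2 \sim n^{1-2q}$; yours is more elementary and fully self-contained, while the paper's recycles machinery it needs anyway. One caveat: your parenthetical claim that the single-sample bound ``can simply be quoted'' from Proposition \ref{propo:con_LA_Phi_n} is too quick --- that proposition is stated for a \emph{fixed} radius $r$ with an unspecified constant $c_r$, so for shrinking radii $r_n = r_0 n^{-q}$ one must reopen its proof and use that $c_r$ scales like $r^2$ (this quadratic scaling is precisely what produces the exponent $n^{1-2q}$, and is where $q<1/2$ enters to keep $\ev{\|X_n\|}\le r_n/2$ for large $n$); your main chi-square argument handles this correctly, which is why it is the better-documented route. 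Note finally that your conclusion $\ge 1 - N\e^{-c_0 n^{1-2q}}$ is the honest form of the claim; the equality displayed in the proposition, with $N$ inside the exponent, should be read as a lower bound of your type, and for fixed $N$ the limit statement is unaffected.
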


\begin{remark}\label{rem:mu_n_B_n}
In the following we require expectations w.r.t.~restrictions of the measures $\mu_n$ in \eqref{equ:mu_simple_2}  to shrinking balls $B_{r_n}(x_n)$.
To this end, we note that the statements of Theorem \ref{theo:laplace_method} also hold true for shrinking domains $D_n = B_{r_n}(x_\star)$ with $r_n = r_0n^{-q}$ as long as $q <1/2$.
This can be seen from the proof of Theorem \ref{theo:laplace_method} in \cite[Section IX.5]{Wong2001}.
In particular, all coefficients in the asymptotic expansion for $\int_{D_n} f(x) \exp(-n \Phi(x)) \d x$ with sufficiently smooth $f$ are the same as for $\int_{D} f(x) \exp(-n \Phi(x)) \d x$ and the difference between both integrals decays for increasing $n$ like $\exp(-c n^\epsilon)$ for an $\epsilon>0$ and $c>0$.
Concerning the balls $B_{r_n}(x_n)$ with decaying radii $r_n = r_0 n^{-q}$, $q\in[0,1/2)$, we have due to $\|x_n - x_\star\| \in \mc O(n^{-1})$---see Remark \ref{rem:Conv_xn}---that $B_{r_n/2}(x_\star) \subset B_{r_n}(x_n) \subset B_{2r_n}(x_\star)$ for sufficiently large $n$.
Thus, the facts for $\mu_n$ as in \eqref{equ:mu_simple_2} stated in the preliminaries before Section \ref{sec:LAIS} do also apply to the restrictions of $\mu_n$ to $B_{r_n}(x_n)$ with $r_n = r_0 n^{-q}$, $q\in[0,1/2)$. 
In particular, the difference between $\evalt{\mu_n}{f}$ and $\evalt{\mu_n}{f\ | \ B_{r_n}(x_n)}$ decays faster than any negative power of $n$ as $n\to \infty$.
\end{remark}

The next result shows that the mean absolute error of the Laplace-based importance sampling behaves like $n^{-(3q-1)}$ conditional on all $N$ samples belonging to shrinking balls $B_{r_n}(x_n)$ with $r_n = r_0 n^{-q}$, $q \in (1/3, 1/2)$. 

\begin{lemma}\label{lem:LAIS_conv}
Consider the measures $\mu_n$ in \eqref{equ:mu_simple_2} and suppose they satisfy the assumptions of Theorem \ref{theo:conv_H}.
Then, for \rev{any $f \in  C^2(\bbR^d, \bbR) \cap L^2_{\mu_0}(\bbR)$} there holds for the error
\[
	e_{n,N}(f) 
	\coloneqq
	\left| \mathrm{IS}^{(N)}_{\mu_n, \LAn}(f) - \evalt{\mu_n}{f}\right|,
\]
of the Laplace-based importance sampling with $N\in\bbN$ samples that
\[
	\ev{ e_{n,N}(f)  \ \big| \ X^{(n)}_1, \ldots, X^{(n)}_N \in B_{r_n}(x_n)} 
	\in \mc O(n^{-(3q-1)}),
\]
where $r_n = r_0 n^{-q}$ with $q \in (1/3,1/2)$.
\end{lemma}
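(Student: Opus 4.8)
The plan is to condition throughout on the event $A_n \coloneqq \{X^{(n)}_1,\ldots,X^{(n)}_N \in B_{r_n}(x_n)\}$, under which the $X^{(n)}_i$ are i.i.d.\ draws from the Laplace approximation $\LAn$ conditioned to the ball $B_{r_n}(x_n)$. Because the self-normalized estimator $\mathrm{IS}^{(N)}_{\mu_n,\LAn}(f)$ is invariant under rescaling of the weights, the constant prefactor $\tilde Z_n/Z_n$ in the weight \eqref{equ:LAIS_weight} cancels, so only the relative behaviour of $\e^{-nR_n}$ on the ball matters; moreover, since $x_\star$ is interior to $\S_0$ and $r_n\to 0$, we have $B_{r_n}(x_n)\subset\S_0$ for large $n$, so the indicator $\mathbf 1_{\S_0}$ can be dropped. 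The whole argument then reduces to showing that the weights are \emph{nearly constant} on the shrinking ball.

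First I would establish this near-constancy. Since $\tilde I_n = T_2 I_n(x_n)$ is the second-order Taylor polynomial of $I_n$ at $x_n$, the remainder $R_n = I_n - \tilde I_n$ is of third order, and the uniform bound $\|\nabla^3 I_n\|\le K_r$ from Assumption \ref{assum:LA_Conv} gives $|R_n(x)|\le \tfrac{K_r}{6}\|x-x_n\|^3$ on the ball. With $\|x-x_n\|\le r_n = r_0 n^{-q}$ this yields $n\,|R_n(x)|\le C\,n^{1-3q}$, which tends to $0$ \emph{precisely because} $q>1/3$; using $|\e^{-t}-1|\le|t|\e^{|t|}$ I then obtain $|\e^{-nR_n(x)}-1|\le C' n^{-(3q-1)}$ uniformly on $B_{r_n}(x_n)$.

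Next I would exploit this by comparing the estimator with the plain empirical mean $\bar f\coloneqq\tfrac1N\sum_{i=1}^N f(X^{(n)}_i)$. Writing $\e^{-nR_n(X_i)}=1+\rho_n(X_i)$ with $|\rho_n|\le C'n^{-(3q-1)}$, a direct manipulation of the self-normalized ratio gives
\[
	\mathrm{IS}^{(N)}_{\mu_n,\LAn}(f)-\bar f = \frac{\tfrac1N\sum_{i=1}^N\rho_n(X_i)\,(f(X_i)-\bar f)}{1+\tfrac1N\sum_{i=1}^N\rho_n(X_i)}.
\]
On $A_n$ the samples lie in a fixed neighbourhood of $x_\star$, so $\sup_{B_{r_n}(x_n)}|f|$ is bounded uniformly in $n$ by continuity; together with $|\rho_n|\le C'n^{-(3q-1)}$ and $1+\tfrac1N\sum\rho_n\ge\tfrac12$ for large $n$, this bounds the right-hand side by $\mc O(n^{-(3q-1)})$ \emph{deterministically} on $A_n$. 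It then remains to control $\ev{|\bar f-\evalt{\mu_n}{f}|\mid A_n}$, which I would split into a Monte Carlo term $\ev{|\bar f-\evalt{\LAn}{f\mid B_{r_n}(x_n)}|\mid A_n}$ and a bias term $|\evalt{\LAn}{f\mid B_{r_n}(x_n)}-\evalt{\mu_n}{f}|$. For the first, Jensen's inequality bounds it by $N^{-1/2}\big(\Var_{\LAn(\cdot\mid B_{r_n}(x_n))}(f)\big)^{1/2}$, and since $f$ is Lipschitz on the ball with $\|X-x_n\|\le r_n$, this conditional variance is $\mc O(r_n^2)=\mc O(n^{-2q})$, giving $\mc O(n^{-q})$. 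For the second, the symmetry of $\LAn$ and of the ball about $x_n$ kills the linear Taylor term, so $\evalt{\LAn}{f\mid B_{r_n}(x_n)}=f(x_n)+\mc O(n^{-1})=f(x_\star)+\mc O(n^{-1})$ by Remark \ref{rem:Conv_xn}, while $\evalt{\mu_n}{f}=f(x_\star)+\mc O(n^{-1})$ by \eqref{equ:Exp_mu} and Remark \ref{rem:mu_n_B_n}, yielding $\mc O(n^{-1})$.

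Finally, collecting the three contributions and using $q\in(1/3,1/2)$—so that $3q-1\in(0,1/2)$ and hence $n^{-(3q-1)}$ dominates both $n^{-q}$ and $n^{-1}$—gives the claimed rate $\mc O(n^{-(3q-1)})$. The main obstacle is the second step: turning the third-order smallness of $R_n$ into a \emph{uniform} relative bound on the weights across the shrinking ball, which is exactly where the uniform third-derivative control of Assumption \ref{assum:LA_Conv} and the restriction $q>1/3$ are essential. Once the weights are shown to be near-constant, the remaining estimates are routine Monte Carlo and Laplace-method bookkeeping.
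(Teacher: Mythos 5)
Your proof is correct, and it follows a genuinely different decomposition than the paper's, although both arguments pivot on the identical key estimate: the uniform third-derivative bound of Assumption \ref{assum:LA_Conv} gives $n\sup_{x\in B_{r_n}(x_n)}|R_n(x)|\le C n^{1-3q}$, which is where $q>1/3$ enters and where the rate $n^{-(3q-1)}$ originates. The paper factorizes the estimator \emph{multiplicatively} as $\mathrm{IS}^{(N)}_{\mu_n,\LAn}(f)=H_{n,N}\,S_{n,N}$, where $S_{n,N}=\frac1N\sum_i w_n(X^{(n)}_i)f(X^{(n)}_i)$ carries the exactly normalized weights (so its conditional mean is the restricted posterior mean) and $H_{n,N}$ collects the normalization; this forces the paper to invoke the ratio asymptotics $Z_n/\tilde Z_n = 1+\mc O(n^{-1})$ from \eqref{equ:Ratio_Zn_asymp} and the restriction estimate of Remark \ref{rem:mu_n_B_n}, and its stochastic term is a variance bound $\mc O(n^{-1/2})$ for $S_{n,N}$. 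You instead compare \emph{additively} with the unweighted empirical mean $\bar f$ through an exact algebraic identity, exploiting the scale-invariance of the self-normalized estimator so that the constant $\tilde Z_n/Z_n$ cancels and \eqref{equ:Ratio_Zn_asymp} is never needed; the price is a separate bias chain $\evalt{\LAn}{f\mid B_{r_n}(x_n)} \approx f(x_n) \approx f(x_\star) \approx \evalt{\mu_n}{f}$, which you settle by the symmetry of the conditioned Gaussian, Remark \ref{rem:Conv_xn}, and \eqref{equ:Exp_mu}. Two small remarks on your bookkeeping: your claim $\evalt{\LAn}{f\mid B_{r_n}(x_n)}=f(x_n)+\mc O(n^{-1})$ does hold (the conditional second moment of the centered Gaussian on a centered ball is at most the unconditional one, $\mc O(n^{-1})$), but you do not even need it --- the crude quadratic bound $\mc O(r_n^2)=\mc O(n^{-2q})$ already suffices since $2q\ge 3q-1$; and your stochastic term $\mc O(n^{-q})$ differs from the paper's $\mc O(n^{-1/2})$, but both are dominated by $n^{-(3q-1)}$ on $q\in(1/3,1/2)$, so the final rate is the same. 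Overall your route is somewhat more elementary in that it sidesteps the normalization-constant asymptotics (and hence the Polyak--\L ojasiewicz argument behind them), at the cost of leaning on the Laplace expansion \eqref{equ:Exp_mu} for the posterior mean.
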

\begin{proof}
We start with
\begin{align*}
	e_{n,N}(f)
	& \coloneqq
	\left| \mathrm{IS}^{(N)}_{\mu_n, \LAn}(f) - \evalt{\mu_n}{f}\right|\\
	& \leq
	\left| \mathrm{IS}^{(N)}_{\mu_n, \LAn}(f) - \evalt{\mu_n}{f\ | \ B_{r_n}(x_n) }\right| +  \left| \evalt{\mu_n}{f} - \evalt{\mu_n}{f\ | \ B_{r_n}(x_n) }\right|.
\end{align*}
The second term decays subexponentially w.r.t.~$n$, see Remark \ref{rem:mu_n_B_n}.
Hence, it remains to prove that 
\[
	\ev{ \left. \left| \mathrm{IS}^{(N)}_{\mu_n, \LAn}(f) - \evalt{\mu_n}{f\ | \ B_{r_n}(x_n) }\right| \ \right|  \ X^{(n)}_1, \ldots, X^{(n)}_N \in B_{r_n}(x_n) } 
	\in \mc O(n^{- (3q-1)}).
\]
To this end, we write the self-normalizing Laplace-based importance sampling estimator as
\begin{align*}
	\mathrm{IS}^{(N)}_{\mu_n, \LAn}(f)
	& =
	\frac{\frac 1N \sum_{i=1}^N \tilde w_n (X^{(n)}_i) f(X^{(n)}_i) }{\frac 1N \sum_{i=1}^N \tilde w_n (X^{(n)}_i)}
	=
	H_{n,N} \ S_{n,N}
\end{align*}
where we define
\[
	H_{n,N} \coloneqq \frac{Z_n}{\tilde Z_n} \ \frac1{\frac 1N \sum_{i=1}^N \tilde w_n (X^{(n)}_i)},
	\qquad
	S_{n,N} = \frac 1N \sum_{i=1}^N w_n (X^{(n)}_i) f(X^{(n)}_i),
\]
and recall that $w_n$ is as in \eqref{equ:LAIS_weight} and $\tilde w_n(x) = \exp(-n R_n(x))$. 
Notice that
\[
	\ev{S_{n,N}} = \evalt{\mu_n}{f},
	\quad
	\ev{S_{n,N}\ | \ X^{(n)}_1, \ldots, X^{(n)}_N \in B_{r_n}(x_n) } = \evalt{\mu_n}{f \ | \ B_{r_n}(x_n)}.
\]
Let us denote the event that $X^{(n)}_1, \ldots, X^{(n)}_N \in B_{r_n}(x_n)$ by $A_{n,N}$ for brevity.
Then,
\begin{align*}
	& \ev{ \left. \left| \mathrm{IS}^{(N)}_{\mu_n, \LAn}(f) - \evalt{\mu_n}{f\ | \ B_{r_n}(x_n) }\right| \ \right|  \ A_{n,N} }\\
	= & %\qquad \qquad \qquad \qquad \qquad \qquad \qquad 
	\quad \ev{ \left. \left| H_{n,N} S_{n,N} - \ev{S_{n,N} \ | \ A_{n,N} } \right| \ \right| \ A_{n,N}}\\
	\leq & \quad \ev{ \left. \left|S_{n,N} - \ev{S_{n,N} \ | \ A_{n,N} } \right| \ \right| \ A_{n,N}} + \ev{ \left. \left|\left(H_{n,N} -1\right) S_{n,N} \right| \ \right| \ A_{n,N}}
\end{align*}
The first term in the last line can be bounded by the conditional variance of $S_{n,N}$ given $X^{(n)}_1, \ldots, X^{(n)}_N \in B_{r_n}(x_n)$, i.e., by Jensen's inequality we obtain
\begin{align*}
	\ev{ \left. \left|S_{n,N} - \ev{S_{n,N} \ | \ A_{n,N} } \right| \ \right| \ A_{n,N}}^2
	& \leq \Var( \left. S_{n,N} \ \right| \  A_{n,N} )\\
	& = \frac 1N \Var_{\mu_n}\left( \left. f \ \right| \  B_{r_n}(x_n) \right)
	\in \mc O(n^{-1}),
%	\leq \frac 1N \Var_{\mu_n}\left(f\right).
\end{align*}
see Remark \ref{rem:mu_n_B_n} and the preliminaries before Subsection \ref{sec:LAIS}.
Thus,
\begin{align*}
	\ev{ \left. \left|S_{n,N} - \ev{S_{n,N} \ | \ A_{n,N} } \right| \ \right| \ A_{n,N}}
	\in \mc O(n^{-1/2})
	\rev{\, \subset \mc O(n^{- (3q-1)})}
\end{align*}
and it remains to study if $\ev{ \left. \left|\left(H_{n,N} -1\right) S_{n,N} \right| \ \right| \ A_{n,N}} \in \mc O(n^{- (3q-1)})$.
Given that $X^{(n)}_1, \ldots, X^{(n)}_N \in B_{r_n}(x_n)$ we can bound the values of the random variable $H_{n,N}$ for sufficiently large $n$: first, we have \rev{$Z_n / \tilde Z_n = 1 + \mc O(n^{-1})$, see \eqref{equ:Ratio_Zn_asymp},} and second
\[
	\exp\left(- n \max_{|x-x_n| \leq r_n} |R_n(x)|  \right) \leq \frac1{\frac 1N \sum_{i=1}^N \tilde w_n (X^{(n)}_i)} \leq \exp\left( n \max_{|x-x_n| \leq r_n} |R_n(x)|  \right).
\]
Since $|R_n(x)| \leq c_3 \|x-x_n\|^3$ for $|x-x_n| \leq r_n$ due to the local boundedness of the third derivative of $I_n$ and $r_n = r_0n^{-q}$, we have that
\[
	\exp\left(-c n^{1-3q} \right) \leq \frac1{\frac 1N \sum_{i=1}^N \tilde w_n (X^{(n)}_i)} \leq \exp\left(c n^{1-3q} \right),
\]
where $c>0$. 
Thus, there exist $\alpha_n \leq 1 \leq \beta_n$ with \rev{$\alpha_n = \e^{- c n^{1-3q} } (1 + \mc O(n^{-1}))$ and $\beta_n \sim \e^{cn^{1-3q}} (1 + \mc O(n^{-1}))$} such that
\[	
	\bbP\left( \alpha_n \leq H_{n,N} \leq \beta_n \ | \  X^{(n)}_1, \ldots, X^{(n)}_N \in B_{r_n}(x_n) \right) = 1.
\]
Since \rev{$\e^{\pm c n^{1-3q} } (1 +  \mc O(n^{-1})) = 1 \pm cn^{1-3q} + \mc O(n^{-1})$} we get that for sufficiently large $n$ there exists a $\tilde c >0$ such that
\[	
	\bbP\left( \left|  H_{n,N} -1 \right| \leq c n^{1-3q} + \tilde c n^{-1} \ | \  X^{(n)}_1, \ldots, X^{(n)}_N \in B_{r_n}(x_n)\right)
	= 
	1.
\]
Hence,
\begin{align*}
	\ev{ \left. \left|\left(H_{n,N} -1\right) S_{n,N} \right| \ \right| \ A_{n,N}}
	& \leq \left(c n^{1-3q} + \tilde c n^{-1} \right) \ev{\left. |S_{n,N}| \ \right| \ A_{n,N} }\\
	& \in	\mc O(n^{- (3q-1)} ),
\end{align*}
since $\ev{ |S_{n,N}| \ | \ A_{n,N} } \leq \evalt{\mu_n} {|f| \ | \ B_{r_n(x_n)}}$ is uniformly bounded w.r.t.~$n$.
This concludes the proof.
\end{proof}

We now present our main result for the Laplace-based importance sampling which states that the corresponding error decays in probability to zero as $n\to \infty$ and the order of decay is arbitrary close to $n^{-1/2}$.

\begin{theorem}\label{theo:conv_LAIS_stoch}
Let the assumptions of Lemma \ref{lem:LAIS_conv} be satisfied.
Then, for \rev{any $f \in  C^2(\bbR^d, \bbR) \cap L^2_{\mu_0}(\bbR)$ and each} sample size $N\in\bbN$ the error $e_{n,N}(f)$ of Laplace-based importance sampling satisfies
\[
	n^{\delta} e_{n,N}(f) \xrightarrow[n\to \infty]{\bbP} 0,
	\qquad
	\delta \in [0,1/2).
\]
\end{theorem}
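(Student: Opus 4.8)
The plan is to prove the claim in the direct form of convergence in probability: fix $\delta \in [0,1/2)$ and an arbitrary $\varepsilon > 0$, and show that $\bbP\!\left(n^\delta e_{n,N}(f) > \varepsilon\right) \to 0$ as $n\to\infty$. The two ingredients needed are already at hand. Proposition \ref{propo:LAIS_conc} guarantees that, for a shrinking ball $B_{r_n}(x_n)$ with $r_n = r_0 n^{-q}$ and $q\in(0,1/2)$, all $N$ samples lie in $B_{r_n}(x_n)$ with a probability whose complement is exponentially small in $n$. Lemma \ref{lem:LAIS_conv} controls the conditional expected error on exactly that event, giving $\ev{e_{n,N}(f) \mid A_{n,N}} \in \mc O(n^{-(3q-1)})$ for $q\in(1/3,1/2)$. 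The whole argument then reduces to choosing $q$ well and splitting the probability over the concentration event and its complement.

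First I would select the radius exponent. Since $\delta < 1/2$, we have $(\delta+1)/3 < 1/2$, so the interval $\bigl((\delta+1)/3,\, 1/2\bigr)$ is nonempty; I pick any $q$ in it, set $r_n = r_0 n^{-q}$, and write $A_{n,N}$ for the event $\{X^{(n)}_1,\ldots,X^{(n)}_N \in B_{r_n}(x_n)\}$. Because $\delta \ge 0$ this $q$ automatically lies in $(1/3,1/2)$, so Lemma \ref{lem:LAIS_conv} applies and yields $\ev{e_{n,N}(f)\mid A_{n,N}} \le C\,n^{-(3q-1)}$ for some constant $C<\infty$, while Proposition \ref{propo:LAIS_conc} gives $\bbP(A^c_{n,N}) = \e^{-c_0 N n^{1-2q}}$.

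Next I would estimate, using $\bbP(A_{n,N})\le 1$ and $\bbP(\,\cdot \mid A^c_{n,N})\le 1$,
\[
	\bbP\!\left(n^\delta e_{n,N}(f) > \varepsilon\right)
	\le
	\bbP\!\left(n^\delta e_{n,N}(f) > \varepsilon \mid A_{n,N}\right) + \bbP(A^c_{n,N}).
\]
The last term decays exponentially since $1-2q>0$. For the conditional term, as $e_{n,N}(f)\ge 0$, the conditional Markov inequality together with Lemma \ref{lem:LAIS_conv} gives
\[
	\bbP\!\left(n^\delta e_{n,N}(f) > \varepsilon \mid A_{n,N}\right)
	\le
	\frac{n^\delta\,\ev{e_{n,N}(f)\mid A_{n,N}}}{\varepsilon}
	\le
	\frac{C}{\varepsilon}\, n^{\delta-(3q-1)}.
\]
By the choice $q > (\delta+1)/3$ we have $\delta-(3q-1) < 0$, so this bound tends to $0$. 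Adding the two contributions shows $\bbP\!\left(n^\delta e_{n,N}(f) > \varepsilon\right)\to 0$, which is the asserted convergence in probability.

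Since the result is essentially an assembly of the two preceding statements, there is no deep obstacle left; the only point requiring genuine care is the bookkeeping of exponents. One must verify that the admissible window $q\in\bigl((\delta+1)/3,\,1/2\bigr)$ is nonempty \emph{precisely} when $\delta < 1/2$ — which is exactly why the rate $n^{-1/2}$ can be approached but never reached — and check that on this window both the exponential smallness of $\bbP(A^c_{n,N})$, governed by $1-2q>0$, and the polynomial decay $n^{-(3q-1)}$ of the conditional error survive multiplication by the amplifying factor $n^\delta$.
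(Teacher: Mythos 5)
Your proof is correct and follows essentially the same route as the paper's: splitting over the concentration event $A_{n,N}$ from Proposition \ref{propo:LAIS_conc}, applying the conditional Markov inequality with the bound from Lemma \ref{lem:LAIS_conv}, and choosing $q \in \bigl((\delta+1)/3, 1/2\bigr)$. The only cosmetic difference is that you bound $\bbP(n^\delta e_{n,N}(f) > \varepsilon)$ from above while the paper bounds the complementary probability from below; the two are equivalent.
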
 
\begin{proof}
Let $0\leq \delta < 1/2$ and $\epsilon > 0$ be arbitrary.
We need to show that
\[
	\lim_{n\to\infty} \bbP\left( n^{\delta} e_{n,N}(f) > \epsilon \right) = 0.
\]
Again, let us denote the event that $X^{(n)}_1, \ldots, X^{(n)}_N \in B_{r_n}(x_n)$ by $A_{n,N}$ for brevity.
By Proposition \ref{propo:LAIS_conc} we obtain for radii  $r_n = r_0 n^{-q}$ with $q \in (1/3, 1/2)$ that
\begin{align*}
	\bbP\left( n^{\delta} \ e_{n,N}(f) \leq \epsilon \right)
	& \geq \bbP\left( n^{\delta} e_{n,N}(f) \leq \epsilon \text{ and } X_1,\ldots, X_N \in B_{r_n}(x_n)\right)\\
	& = \bbP\left( n^{\delta}e_{n,N}(f) \leq \epsilon \ | \ A_{n,N} \right) \ \bbP(A_{n,N}) \\
	& \geq \bbP\left( n^{\delta}e_{n,N}(f) \leq \epsilon \ | \ A_{n,N} \right) \  \left(1 - C_N \e^{- c_0 N  n^{1-2q}}\right).
\end{align*}
The second term on the righthand side in the last line obviously tends to $1$ exponentially as $n\to \infty$.
Thus, it remains to prove that 
\[
	\lim_{n\to\infty} \bbP\left( n^{\delta} e_{n,N}(f) \leq \epsilon \ | \ X_1,\ldots, X_N \in B_{r_n}(x_n) \right) = 1
\]
To this end, we apply a conditional Markov inequality for the positive random variable $e_{n,N}(f)$, i.e.,
\[
	\bbP\left( n^{\delta} e_{n,N}(f) > \epsilon \ | \ A_{n,N}\right)
	\leq
	 \frac {n^{\delta}}{\epsilon}  \ev{e_{n,N}(f) \ | \ A_{n,N}}
	\in
	\mc O\left(n^{\delta - \min(3q-1, 1/2)} \right)
\]
where we used Lemma \ref{lem:LAIS_conv}.
Choosing $q \in (1/3, 1/2)$ such that $q > \frac{1+\delta}3 \in [1/3, 1/2) $ yields the statement.
\end{proof}

%% ------------------------------------------------------------------
%% QMC
%% ------------------------------------------------------------------
\subsection{Quasi-Monte Carlo Integration}
We now want to approximate integrals as in \eqref{eq:ZZ} w.r.t.~measures $\mu_n(\d x) \propto \exp(-n\Phi(x)) \mu_0(\d x)$ as in \eqref{equ:mu_simple_2} by Quasi-Monte Carlo methods. 
%We first introduce the general quasi-Monte Carlo (QMC) approach to the Bayesian inverse problem. QMC methods 
These will be used to estimate the ratio $Z'_n/Z_n$ by separately approximating the two integrals $Z'_n$ and $Z_n$ in \eqref{eq:ZZ}. 
The preconditioning strategy using the Laplace approximation will be explained exemplarily for Gaussian and uniform priors, two popular choices for Bayesian inverse problems. 

We start the discussion by first focusing on a uniform prior distribution $\mu_0=\mathcal U([-\frac 12, \frac 12]^d)$.
The integrals $Z'_n$ and $Z_n$ are then
%w.r.t. the prior $\mu_0$ will be transformed to the unit cube in $\mathbb R^d$, i.e.
\begin{equation}\label{equ:Zn_Theta}
	Z'_n=\int_{[-\frac 12, \frac 12]^d} f(x)\Theta_n(x)\mathrm dx, 
	\qquad 
	Z_n=\int_{[-\frac 12, \frac 12]^d} \Theta_n(x)\mathrm dx,
\end{equation}
where we set $\Theta_n(x) \coloneqq\exp(-n\Phi(x))$ for brevity.

We consider Quasi-Monte Carlo integration based on \emph{shifted Lattice rules}: an $N$-point Lattice rule in the cube $[-\frac 12, \frac 12]^d$ is based on points 
\begin{equation}\label{equ:lattice_points}
	x_i=\fraco\Big(\frac{iz}{N}+\Delta \Big) -\frac 12\,, \quad i=1,\ldots,N\,,
\end{equation}
where $z \in \{1,\ldots , N-1\}^d$ denotes the so-called generating vector, $\Delta$ is a uniformly distributed random shift on $[-\frac 12, \frac 12]^d$ and $\fraco$ denotes the fractional part (component-wise).
These randomly shifted points provide unbiased estimators
%\begin{equation}
\[
	Z'_{n,QMC} \coloneqq \frac{1}{N}\sum_{i=1}^N f(x_i)\Theta(x_i),
	\qquad 
	Z_{n,QMC} \coloneqq \frac{1}{N}\sum_{i=1}^N \Theta(x_i)
\]
%\end{equation}
of the two integrals $Z'_n$ and $Z_n$ in \eqref{equ:Zn_Theta}. 
Under the assumption that the quantity of interest $f\colon \bbR^d\to \bbR$ is linear and bounded, we can focus in the following on the estimation of the normalization constant $Z_n$, the results can be then straightforwardly generalized to the estimation of $Z'_n$.
For the estimator $Z_{n,QMC}$ we have the following well-known error bound.

\begin{theorem}\cite[Thm. 5.10]{KuoActa}\label{thm:QMC}
Let
$\gamma= \{\gamma_{\bm{\nu}}\}_{{\bm{\nu}}\subset \{1,\ldots,d\}}$
denote POD (product and order dependent) weights of the form $\gamma_{\bm{\nu}}=\alpha_{|{\bm{\nu}}|}\prod_{j\in {\bm{\nu}}}\beta_j$ specified by two sequences $\alpha_0=\alpha_1=1, \alpha_2,\ldots\ge 0$ and $\beta_1\ge \beta_2\ge \ldots >0$ for ${\bm{\nu}}\subset \{1,\ldots,d\}$ and $|{\bm{\nu}}|=\# {\bm{\nu}}$. Then, a randomly shifted Lattice rule with $N=2^m, m\in \mathbb N$, can be constructed via a component-by-component algorithm with POD weights at costs of $\mathcal O(dN\log N+d^2 N)$ operations, such that \rev{for sufficiently smooth $\Theta\colon [-\frac 12, \frac 12]^d \to [0,\infty)$}
\begin{equation}
\mathbb E_{\Delta}[(Z_n-Z_{n,QMC})^2]^{1/2}\le \left(2\sum_{\emptyset\neq {\bm{\nu}}\subset\{1,\ldots,d\}}\gamma_{\bm{\nu}}^\kappa\left(\frac{2\zeta(2\kappa)}{(2\pi^2)^\kappa}\right)^{|{\bm{\nu}}|} \right)^{\frac{1}{2\kappa}} \|\Theta_n\|_\gamma \ \  N^{- \frac{1}{2\kappa}}% \,, \quad \kappa\in(1/2,1]\,
\end{equation}
for $\kappa\in(1/2,1]$ with
\[
	\|\Theta_n\|_\gamma^2 \coloneqq \sum_{{\bm{\nu}}\subset\{1,\ldots,d\}} \frac{1}{\gamma_{\bm{\nu}}}\int_{[-\frac 12, \frac 12]^{|{\bm{\nu}}|}}\left(\int_{[-\frac 12, \frac 12]^{d-|{\bm{\nu}}|}}\frac{\partial^{|{\bm{\nu}}|}\Theta_n}{\partial x_{\bm{\nu}}}(x)\mathrm d x_{{1:d}\setminus {\bm{\nu}}}\right)^2\mathrm d x_{\bm{\nu}}
\]
and $\zeta(a)\coloneqq\sum_{k=1}^\infty k^{-a}$.
\end{theorem}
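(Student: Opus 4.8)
Since this is the standard error bound for component-by-component (CBC) constructed randomly shifted rank-1 lattice rules in a weighted unanchored Sobolev space, the plan is to reproduce the argument from the cited reference \cite{KuoActa}. First I would recognise $\|\Theta_n\|_\gamma$ as the norm of the weighted unanchored Sobolev space $\mc H_\gamma$ of functions on $[-\frac12,\frac12]^d$ with square-integrable mixed first partial derivatives---a reproducing kernel Hilbert space whose kernel factorises over coordinates according to the weights $\gamma_{\bm{\nu}}$. For a \emph{fixed} shift $\Delta$, the deterministic integration error is bounded by the worst-case error in $\mc H_\gamma$ times $\|\Theta_n\|_\gamma$; squaring and averaging over the uniformly distributed shift $\Delta$ then yields
\[
	\mathbb E_\Delta\big[(Z_n - Z_{n,QMC})^2\big]^{1/2} \le e^{\mathrm{sh}}_{N,d}(z)\, \|\Theta_n\|_\gamma,
\]
where $e^{\mathrm{sh}}_{N,d}(z)$ is the shift-averaged worst-case error. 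This reduces the whole problem to constructing a generating vector $z$ for which $e^{\mathrm{sh}}_{N,d}(z)$ is small.

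The second step is to make $e^{\mathrm{sh}}_{N,d}(z)$ explicit. Integrating the reproducing kernel against the random shift cancels the first-order contributions and leaves only the periodised Bernoulli polynomial $B_2(x) = x^2 - x + \tfrac16$, giving the closed form
\[
	\big(e^{\mathrm{sh}}_{N,d}(z)\big)^2 = \sum_{\emptyset \neq \bm{\nu} \subseteq \{1,\ldots,d\}} \gamma_{\bm{\nu}}\, \frac1N \sum_{i=0}^{N-1} \prod_{j\in\bm{\nu}} B_2\!\left(\Big\{\tfrac{i z_j}{N}\Big\}\right).
\]
Expanding $B_2$ in its Fourier series produces the Riemann zeta values, and this is where the factor $\frac{2\zeta(2\kappa)}{(2\pi^2)^\kappa}$ in the statement originates.

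The third step is the CBC construction and the rate. The algorithm selects $z_1,\ldots,z_d$ greedily, each coordinate minimising the shift-averaged squared error with the previous coordinates fixed. The standard argument bounds the attained minimum by the average over all admissible choices, after which Jensen's inequality applied with exponent $\kappa \in (1/2,1]$ introduces $\zeta(2\kappa)$ and, by induction over the dimension $s = 1,\ldots,d$, yields
\[
	\big(e^{\mathrm{sh}}_{N,d}(z)\big)^2 \le \frac{1}{\varphi(N)}\left( \sum_{\emptyset \neq \bm{\nu}} \gamma_{\bm{\nu}}^\kappa \left(\frac{2\zeta(2\kappa)}{(2\pi^2)^\kappa}\right)^{|\bm{\nu}|} \right)^{1/\kappa},
\]
with $\varphi$ Euler's totient function. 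For $N = 2^m$ one has $\varphi(N) = N/2$, which produces the factor $2$ and, after taking the $2\kappa$-th root and combining with the first step, exactly the asserted bound with rate $N^{-1/(2\kappa)}$.

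Finally, the cost estimate $\mc O(dN\log N + d^2 N)$ relies on the fast CBC implementation: the inner sums over residues $i z_j/N$ carry a circulant structure that the FFT evaluates in $\mc O(N\log N)$ operations per coordinate, while the POD weight form $\gamma_{\bm{\nu}} = \alpha_{|\bm{\nu}|}\prod_{j\in\bm{\nu}}\beta_j$ requires maintaining $d$ auxiliary vectors for the order-dependent factors $\alpha_{|\bm{\nu}|}$, contributing the $d^2 N$ term. I expect the inductive averaging-plus-Jensen step (with the stated explicit constant) and the bookkeeping of the fast POD implementation to be the most delicate parts; the remainder is a direct transcription of weighted-space QMC theory. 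Since the result is quoted verbatim as \cite[Thm.~5.10]{KuoActa}, the cleanest route in the paper is simply to invoke that reference.
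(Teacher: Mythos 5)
The paper itself gives no proof of this statement: it is quoted verbatim from the cited reference, so your closing remark---that the cleanest route is simply to invoke \cite[Thm.~5.10]{KuoActa}---is exactly what the paper does. Your sketch of the underlying argument (weighted unanchored Sobolev space with norm $\|\cdot\|_\gamma$, shift-averaged worst-case error expressed through the periodised Bernoulli polynomial $B_2$, CBC construction with the averaging-plus-Jensen step, fast CBC for the cost bound) is a faithful outline of the proof in that reference.

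One correction is needed, however: in your displayed CBC bound the factor $1/\varphi(N)$ must sit \emph{inside} the parentheses raised to the power $1/\kappa$, i.e.
\[
	\big(e^{\mathrm{sh}}_{N,d}(z)\big)^2
	\le
	\left(\frac{1}{\varphi(N)} \sum_{\emptyset\neq\bm{\nu}\subseteq\{1,\ldots,d\}} \gamma_{\bm{\nu}}^{\kappa}\left(\frac{2\zeta(2\kappa)}{(2\pi^2)^{\kappa}}\right)^{|\bm{\nu}|}\right)^{1/\kappa}.
\]
As you wrote it, with $1/\varphi(N)$ outside, taking square roots gives $e^{\mathrm{sh}}_{N,d}(z)\le \varphi(N)^{-1/2}(\cdots)^{1/(2\kappa)} = \sqrt{2}\,N^{-1/2}(\cdots)^{1/(2\kappa)}$ for $N=2^m$, i.e.~a rate $N^{-1/2}$ independent of $\kappa$ and a constant in the wrong place; this is not what the averaging argument delivers and does not reproduce the stated bound. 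With the correct placement, $\varphi(2^m)=N/2$ gives $\big(\varphi(N)\big)^{-1/(2\kappa)} = (2/N)^{1/(2\kappa)}$, which is precisely the factor $2$ inside the $(\cdot)^{1/(2\kappa)}$ and the rate $N^{-1/(2\kappa)}$ of the theorem. Your concluding sentence implicitly uses this correct placement, so the slip is internal to the display rather than to the logic, but it should be fixed if the sketch were to be written out.
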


The norm $\| \Theta_n\|_\gamma$ in the convergence analysis depends on $n$, %the size of the measurement noise, 
in particular, it can grow {polynomially} w.r.t. the concentration level $n$ of the measures $\mu_n$ 
%measurement noise level $1/n$ 
as we state in the next result.

\begin{lemma}\label{lem:Prior_QMC_norm}
\rev{Let $\Phi\colon \bbR^d\to[0,\infty)$ satisfy the assumptions of Theorem \ref{theo:laplace_method} for $p=2d$}. %and $\Phi\colon [-\frac 12, \frac 12]^d \to [0,\infty)$ be sufficiently smooth.
Then, for the norm $\| \Theta_n\|_\gamma$ in the error bound in Theorem \ref{thm:QMC} there holds
\[
	 \lim_{n\to\infty} n^{-d/4} \|\Theta_n\|_\gamma
	>
	0.
\]
\end{lemma}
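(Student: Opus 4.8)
The plan is to expand the definition of $\|\Theta_n\|_\gamma^2$ as a finite sum over subsets ${\bm{\nu}}\subseteq\{1,\ldots,d\}$ and to isolate a single dominant contribution. Writing $x=(u,v)$ with $u=x_{\bm{\nu}}$ and $v$ the remaining coordinates, the ${\bm{\nu}}$-term equals $\gamma_{\bm{\nu}}^{-1}\int_{[-\frac12,\frac12]^{|{\bm{\nu}}|}}G_{n,{\bm{\nu}}}(u)^2\,\d u$ with $G_{n,{\bm{\nu}}}(u)=\partial_{u}^{(1,\ldots,1)}\int e^{-n\Phi(u,v)}\,\d v$ (interchanging derivative and inner integral). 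Since every summand is a $\gamma_{\bm{\nu}}^{-1}$-multiple of a square, they are all nonnegative. Throughout I use the normalization $\Phi(x_\star)=\min\Phi=0$ employed elsewhere in the paper, and recall that $x_\star$ lies in the interior of the cube with $\nabla\Phi(x_\star)=0$ and $H_\star=\nabla^2\Phi(x_\star)>0$. Write $\gamma_{\mathrm{full}}:=\gamma_{\{1,\ldots,d\}}$.

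The heart of the argument is the full term ${\bm{\nu}}=\{1,\ldots,d\}$, which carries no inner integral and equals $\gamma_{\mathrm{full}}^{-1}\int_{[-\frac12,\frac12]^d}\bigl(\partial_{x_1}\cdots\partial_{x_d}\Theta_n\bigr)^2\,\d x$. The key observation is that $\partial_{x_1}\cdots\partial_{x_d}e^{-n\Phi}=e^{-n\Phi}Q_n$ with leading coefficient $(-n)^d\prod_j\partial_{x_j}\Phi$, and this coefficient \emph{vanishes} at $x_\star$ because $\nabla\Phi(x_\star)=0$; this degeneracy is exactly what turns the naive order $n^{2d}\cdot n^{-d/2}$ into $n^{d/2}$. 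I would make this precise by the parabolic rescaling $x=x_\star+y/\sqrt n$: then $\partial_{x_j}=\sqrt n\,\partial_{y_j}$ and $\Phi^{\mathrm{sc}}_n(y):=n\Phi(x_\star+y/\sqrt n)\to\tfrac12 y^\top H_\star y$ together with its derivatives up to order $d$ locally uniformly, so that
\[
\int_{[-\frac12,\frac12]^d}\bigl(\partial_{x_1}\cdots\partial_{x_d}\Theta_n\bigr)^2\,\d x
= n^{d/2}\int_{D_n}\bigl(\partial_{y_1}\cdots\partial_{y_d}e^{-\Phi^{\mathrm{sc}}_n(y)}\bigr)^2\,\d y,
\qquad D_n=\sqrt n\,\bigl([-\tfrac12,\tfrac12]^d-x_\star\bigr)\uparrow\bbR^d.
\]
The integrand converges pointwise to $\bigl(\partial_{y_1}\cdots\partial_{y_d}e^{-\frac12 y^\top H_\star y}\bigr)^2$, whose integral $C_\star$ over $\bbR^d$ is finite and strictly positive (a nonzero polynomial times a Gaussian, squared). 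By Fatou's lemma $\liminf_n n^{-d/2}\int(\partial_{x_1}\cdots\partial_{x_d}\Theta_n)^2\,\d x\ge C_\star>0$, and since all terms in $\|\Theta_n\|_\gamma^2$ are nonnegative this already gives $\liminf_n n^{-d/4}\|\Theta_n\|_\gamma\ge\sqrt{C_\star/\gamma_{\mathrm{full}}}>0$, which is the essential content.

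To upgrade $\liminf$ to the full limit I would show the other terms are of strictly lower order. For $|{\bm{\nu}}|=k<d$, Laplace's method on the inner integral gives $\int e^{-n\Phi(u,v)}\,\d v\sim n^{-(d-k)/2}e^{-n\psi(u)}a(u)$, where $\psi(u)=\min_v\Phi(u,v)$ satisfies $\psi(u_\star)=0$, $\nabla\psi(u_\star)=0$ and $\nabla^2\psi(u_\star)$ equal to the (positive definite) Schur complement of $H_\star$. Differentiating in $u$, the leading part picks up $(-n)^k\prod_{i}\partial_{u_i}\psi$, which again vanishes at $u_\star$; rescaling $u=u_\star+s/\sqrt n$ as before yields order $n^{(3k/2)-d}$. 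Since the exponent $(3k/2)-d$ is strictly increasing in $k$ and equals $d/2$ only at $k=d$, every $k<d$ term is $o(n^{d/2})$. Together with the convergence $\int_{D_n}(\cdots)^2\to C_\star$ of the full term, this yields $\|\Theta_n\|_\gamma^2\sim (C_\star/\gamma_{\mathrm{full}})\,n^{d/2}$ and hence $\lim_n n^{-d/4}\|\Theta_n\|_\gamma=\sqrt{C_\star/\gamma_{\mathrm{full}}}>0$.

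The step I expect to be the main obstacle is controlling the tails of the rescaled integral uniformly in $n$, so as to replace the Fatou lower bound by genuine convergence of $\int_{D_n}(\cdots)^2$ over the expanding domain (and similarly after integrating out $v$). A dominated majorant here follows from the uniform lower bound $\Phi(x)-\Phi(x_\star)\ge\delta_r>0$ on $B^c_r(x_\star)$ (Assumption~2 of Theorem~\ref{theo:laplace_method}) combined with local strong convexity of $\Phi$ near $x_\star$, while the generous smoothness $\Phi\in C^{4d+3}$ guaranteed by taking $p=2d$ legitimizes differentiating $\Theta_n$ up to order $d$ and passing to the limit in all of its derivatives. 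Conceptually, the crux is the vanishing $\prod_j\partial_{x_j}\Phi(x_\star)=0$—overlooking it would produce the wrong power of $n$—but this is handled transparently by the rescaling.
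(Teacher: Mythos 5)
Your proof is correct, and its opening move---discarding every term except $\vnu=\{1,\ldots,d\}$ by nonnegativity---is exactly the paper's first step; after that the two arguments genuinely diverge. The paper expands $\partial_{x_1}\cdots\partial_{x_d}e^{-n\Phi}$ by the Faa di Bruno formula, writes the squared integral $F_d(n)$ as a double sum over pairs of partitions $(P,\tilde P)$ of $\{1,\dots,d\}$, and applies Theorem \ref{theo:laplace_method} (for $2\Phi$) to each term $\int h_{P,\tilde P}\,e^{-2n\Phi}\,\d x$; the vanishing of $\nabla\Phi(x_\star)$ enters there as the order of vanishing of $h_{P,\tilde P}$ at $x_\star$, counted by the number of singleton blocks, and maximizing the exponent $|P|+|\tilde P|-\lfloor(|P|_1+|\tilde P|_1)/2\rfloor=d$ yields $F_d(n)\sim c\,n^{d/2}$. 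Your parabolic rescaling plus Fatou reaches the same growth more directly: the cancellation forced by $\nabla\Phi(x_\star)=0$ is absorbed into the convergence $\Phi_n^{\mathrm{sc}}(y)\to\tfrac12 y^\top H_\star y$, and positivity of the limit constant $C_\star$ is automatic, since the integrand is the square of a nonzero polynomial times a Gaussian. That is a genuine advantage: in the paper's bookkeeping the maximal exponent $d$ is attained by \emph{several} pairs $(P,\tilde P)$ (e.g.\ $P$ all singletons and $\tilde P$ containing one block of size two), so asserting $F_d(n)\sim c\,n^{d/2}$ with $c>0$ strictly requires ruling out cancellation among the leading coefficients---a point the paper elides but which your square-plus-Fatou structure settles for free; also, your $\liminf$ bound needs no dominating majorant whatsoever. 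What the paper's route buys in return is explicit coefficient formulas from Wong's expansion and partition machinery that is reused verbatim in the companion upper-bound result (Lemma \ref{lem:Laplace_QMC_norm}), where a Fatou-type lower bound would be of no use. Finally, note that the paper, having discarded the $|\vnu|<d$ terms, likewise only establishes a $\liminf$-type statement, so your Fatou bound already matches the paper's actual conclusion; your additional program for upgrading to a true limit (parametric Laplace asymptotics giving order $n^{3k/2-d}$ for $|\vnu|=k<d$, plus tail domination over the expanding domains $D_n$)---admittedly the sketchiest part of your write-up, since differentiating a parameter-dependent Laplace expansion needs justification---is optional and goes beyond what the paper itself proves.
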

The proof of Lemma \ref{lem:Prior_QMC_norm} is rather technical and can be found in Appendix \ref{sec:Prior_QMC_norm}.
We remark that Lemma \ref{lem:Prior_QMC_norm} just tells us that the root mean squared error estimate for QMC integration based on the prior measure explodes like $n^{d/4}$.
This does in general not indicate that the error itself explodes; in fact the QMC integration error for the normalization constant is bounded by $1$ in our setting.
Nonetheless, Lemma \ref{lem:Prior_QMC_norm} indicates that a naive Quasi-Monte Carlo integration based on the uniform prior $\mu_0$ is not suitable for highly concentrated target or posterior measures $\mu_n$.
We subsequently propose and study a Quasi-Monte Carlo integration based on the Laplace approximation $\LAn$.

\paragraph{Laplace-based Quasi-Monte Carlo.}
To stabilize the numerical integration for concentrated $\mu_n$, we propose a preconditioning based on the Laplace approximation, i.e., an affine rescaling according to the mean and covariance of $\LAn$. 
In the uniform case, the functionals $I_n$ are independent of $n$.
The computation of the Laplace approximation requires therefore only one optimization to solve for $x_n = x_\star = \argmin_{x\in [-\frac 12, \frac 12]^d} \Phi(x)$. 
In particular, the Laplace approximation of $\mu_n$ is given by $\LAn = \mathcal N(x_\star, \frac1n H_{\star}^{-1})$ where $H_{\star}$ denotes the positive definite Hessian $\nabla^2 \Phi(x_\star)$.
Hence, $H_\star$ allows for an orthogonal diagonalization $H_\star = QDQ^\top$ with orthogonal matrix $Q\in\mathbb R^{d\times d}$ and diagonal matrix $D=\diag(\lambda_1,\ldots\lambda_d)\in\mathbb R^{d\times d}$, $\lambda_1\ge \ldots\ge \lambda_d>0$. 

We now use this decomposition in order to construct an affine transformation which reverses the increasing concentration of $\mu_n$ and yields a QMC approach robust w.r.t.~$n$.
This transformation is given by
\[
	g_n(x)
	\coloneqq
	x_\star + \sqrt{\frac{2|\ln\tau|}{n}} QD^{-\frac12}x,
	\qquad
	x \in [-\frac 12, \frac 12]^d,
\]
where $\tau \in (0,1)$ is a truncation parameter. 
The idea of the transformation $g_n$ is to zoom into the parameter domain and thus, to counter the concentration effect. 
% by truncating the domain. 
%in each (transformed) coordinate separately. 
%We assume in the following that $\prod_{j=1}^dI_j(n)\subset W$ with $I_j(n)=\{x_j\in\mathbb R:\exp(-\frac12 n\lambda_jx_j^2)\ge \tau\}$ for $j=1,\ldots,d$ and $W=Q^\top(S_0-x_\star)$ with $S_0=[-\frac12,\frac12]^d$. Note that due to the theoretical results on the concentration effect of the posterior, there exists $n_0\in\mathbb N$ such that $\prod_{j=1}^dI_j(n)\subset W$ for all $n\ge n_0$. 
The domain will then be truncated to $G_n \coloneqq g_n([-\frac 12, \frac 12]^d) \subset [-\frac 12, \frac 12]^d$ and we consider
\begin{equation}\label{eq:transqoi}
	\hat Z_n 
	\coloneqq 
	\int_{G_n} \Theta_n(x)\ \d x
	=
	C_{\text{trans}, n}  \int_{[-\frac 12, \frac 12]^d} \Theta_n(g_n(x))\ \d x.
\end{equation}
%as well as $\hat Z'_n \coloneqq C_{\text{trans}, n}\int_{[-\frac 12, \frac 12]^d} f(g_n(x))\Theta_n(g(x))\mathrm dx$.
The determinant of the Jacobian of the transformation $g_n$ is given by $ \det(\nabla g_n(x)) \equiv C_{\text{trans}, n} = \left(\frac{2|\ln\tau|}{n}\right)^{\frac d2} \sqrt{\det (H_\star)} \sim c_\tau n^{-d/2}$.
We will now explain how the parameter $\tau$ effects the truncation error. 
For given $\tau\in(0,1)$, the Laplace approximation is used to determine the truncation effect: 
\begin{align*}
	\int_{G_n} \ \LAn(\mathrm dx)
	& 
	=
	\frac{C_{\text{trans}, n}}{\tilde Z_n}  \int_{[-\frac 12, \frac 12]^d} \exp\left( -\frac n2 \|g_n(x) - x_\star\|^2_{H_\star} \right) \ \d x\\
	& =
	\left(\frac{|\ln \tau|}{\pi}\right)^{d/2} \int_{[-\frac 12, \frac 12]^d} \exp\left( - |\ln \tau| x^2 \right) \ \d x\\
	& =
	\left(\frac{|\ln \tau|}{\pi}\right)^{d/2} \left(\frac{\sqrt \pi \mathrm{erf}(0.5 \sqrt{|\ln \tau}|)}{\sqrt{|\ln \tau}|)}\right)^d
	= \mathrm{erf}(0.5 \sqrt{|\ln \tau}|)^d.
\end{align*}
Thus, since  due to the concentration effect of the Laplace approximation we have $\int_{\S_0} \ \LAn(\mathrm dx) \to 1$ exponentially with $n$, we get
\begin{align*}
	\int_{\S_0\setminus G_n} \ \LAn(\mathrm dx)
	\leq 1 - \mathrm{erf}(0.5 \sqrt{|\ln \tau}|)^d,
\end{align*}
thus, the truncation error $\int_{\S_0\setminus G_n} \ \LAn(\mathrm dx)$ becomes arbitrarly small for sufficiently small $\tau\ll 1$, since $\mathrm{erf}(t) \to 1$ as $t\to1$.
If we apply now QMC integration using shifted Lattice rule in order to compute the integral over $[-\frac 12, \frac 12]^d$ on the righthand side of \eqref{eq:transqoi}, we obtain the following estimator for $\hat Z_n$ in \eqref{eq:transqoi}:
\[
		\hat Z_{n,QMC} \coloneqq \frac{C_{\text{trans}, n}}{N}\sum_{i=1}^N \Theta(g_n(x_i))
\]
with $x_i$ as in \eqref{equ:lattice_points}.
Concerning the norm $\|\Theta_n\circ g_n \|_\gamma $ appearing in the error bound for $|\hat Z_n - \hat Z_{n,QMC}|$ we have now the following result.

\begin{lemma}\label{lem:Laplace_QMC_norm}
\rev{Let $\Phi\colon \bbR^d\to[0,\infty)$ satisfy the assumptions of Theorem \ref{theo:laplace_method} for $p=2d$.}
%Let the assumptions of Theorem \ref{theo:laplace_method} be satisfied. 
Then, for the norm $\|\Theta_n\circ g_n\|_\gamma$ with $g_n$ as above there holds 
\[
	\|\Theta_n\circ g_n \|_\gamma \in \mc O(1)
	\qquad
	\text{ as } n\to\infty.
\]
\end{lemma}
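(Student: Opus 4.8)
The plan is to bound the seminorm $\|\Theta_n\circ g_n\|_\gamma$ by controlling the mixed partial derivatives of $\psi_n \coloneqq \Theta_n\circ g_n = \exp(-\phi_n)$, where $\phi_n(x)\coloneqq n\,\Phi(g_n(x))$, pointwise on the cube $[-\tfrac12,\tfrac12]^d$. Since the sum defining $\|\psi_n\|_\gamma^2$ runs over the finitely many subsets $\bm{\nu}\subseteq\{1,\dots,d\}$ with fixed positive weights $\gamma_{\bm{\nu}}$ and the integration domains are bounded, it suffices to show that each mixed derivative $\tfrac{\partial^{|\bm{\nu}|}}{\partial x_{\bm{\nu}}}\psi_n$ is bounded on $[-\tfrac12,\tfrac12]^d$ by a constant independent of $n$ (for $n$ large); the claim $\|\psi_n\|_\gamma\in\mc O(1)$ then follows immediately.

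Two structural facts drive the argument. Writing $s_n\coloneqq\sqrt{2|\ln\tau|/n}$, the map is affine, $g_n(x)-x_\star = s_n\,QD^{-1/2}x$, with constant Jacobian $s_n QD^{-1/2}$, and the diagonalization $H_\star = QDQ^\top$ yields $D^{-1/2}Q^\top H_\star QD^{-1/2}=I_d$. First I would use the affine chain rule: any mixed partial derivative of order $k$ of $\phi_n$ has the form $n\,s_n^{k}\,T$, where $T$ is the $k$-th derivative tensor of $\Phi$ evaluated at $g_n(x)$ contracted with columns of $QD^{-1/2}$, hence bounded uniformly on the cube for large $n$ (as $g_n(x)$ stays in the shrinking box $G_n$ around $x_\star$ and $\Phi$ is smooth). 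The decisive bookkeeping is $n\,s_n^{k}=(2|\ln\tau|)^{k/2}\,n^{1-k/2}$.

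For $k\ge 3$ this prefactor is $o(1)$, so those derivatives vanish as $n\to\infty$; for $k=2$ it equals $2|\ln\tau|$ and $(\nabla^2\Phi)(g_n(x))\to H_\star$, giving an $\mc O(1)$ bound. The genuinely delicate case---and the main obstacle---is $k=1$, where $n\,s_n=\sqrt{2|\ln\tau|}\,n^{1/2}$ blows up. Here boundedness is rescued only because $x_\star$ is an interior minimizer, so $\nabla\Phi(x_\star)=0$ and hence $\|\nabla\Phi(g_n(x))\|\le L\,\|g_n(x)-x_\star\|\le L'\,s_n$ uniformly on the cube; the product is then $n s_n\cdot\mc O(s_n)=\mc O(n s_n^2)=\mc O(1)$. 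This cancellation $n s_n^2=\mc O(1)$, combined with the first-order vanishing of $\nabla\Phi$ at $x_\star$, is exactly what the preconditioning by $g_n$ buys and what fails for the untransformed $\Theta_n$ in Lemma \ref{lem:Prior_QMC_norm}. The smoothness needed (derivatives of $\Phi$ up to order $d$, plus one extra order for the Taylor estimate in the $k=1$ case) is amply supplied by the hypothesis that $\Phi$ satisfies the assumptions of Theorem \ref{theo:laplace_method} for $p=2d$.

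To conclude, I would invoke Fa\`a di Bruno's formula: $\tfrac{\partial^{|\bm{\nu}|}}{\partial x_{\bm{\nu}}}\psi_n=\psi_n\cdot P_{\bm{\nu}}$, where $P_{\bm{\nu}}$ is a universal polynomial in the mixed partial derivatives of $\phi_n$ of orders $1,\dots,|\bm{\nu}|$. Since $0<\psi_n\le 1$ (because $\Phi\ge 0$) and each argument of $P_{\bm{\nu}}$ is bounded uniformly in $n$ by the previous step, we get $|\tfrac{\partial^{|\bm{\nu}|}}{\partial x_{\bm{\nu}}}\psi_n(x)|\le C_{\bm{\nu}}$ on the cube with $C_{\bm{\nu}}$ independent of $n$. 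Feeding this into the definition of $\|\cdot\|_\gamma$ bounds each inner integral and therefore $\|\psi_n\|_\gamma^2\le\sum_{\bm{\nu}}\gamma_{\bm{\nu}}^{-1}C_{\bm{\nu}}^2<\infty$ uniformly in $n$, which is the assertion. (The sharper picture $\psi_n(x)\approx\e^{-n\Phi(x_\star)}\e^{-|\ln\tau|\|x\|^2}$ is not needed for the $\mc O(1)$ upper bound, but confirms that no hidden $n$-dependence survives the transformation.)
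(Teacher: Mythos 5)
Your proof is correct, and it takes a genuinely different route to the key estimate than the paper does. Both arguments share the same skeleton---Fa\`a di Bruno applied to $\exp(-n\Phi(g_n(\cdot)))$, the affine scaling that attaches a factor $n^{1-k/2}$ to each order-$k$ derivative block, and the crucial use of $\nabla\Phi(x_\star)=0$---but the mechanism for exploiting the vanishing gradient differs. The paper, after Fa\`a di Bruno, applies Cauchy--Schwarz and Jensen to reduce the $\gamma$-norm to $L^2$-norms of products of derivative factors, substitutes $y=g_n(x)$, and then invokes the full Laplace asymptotic expansion (Theorem \ref{theo:laplace_method} with the explicit coefficient formula \eqref{equ:Laplace_coef_2}) to show that each product $h_P$ vanishes at $x_\star$ to the order $|P|_1$ of singleton blocks, giving integral decay $n^{-|P|_1}$; the proof then closes with the combinatorial maximization $\max_P\bigl(|P|-|P|_1/2\bigr)=|\vnu|/2$, which exactly cancels the prefactor $n^{|P|-|\vnu|/2}$. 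You instead make the cancellation pointwise: since $g_n$ maps the cube into a ball of radius $\mc O(n^{-1/2})$ around $x_\star$ and $\nabla\Phi$ is Lipschitz there with $\nabla\Phi(x_\star)=0$, the first-order derivative factor is itself $\mc O(n^{-1/2})$ uniformly on the cube, so $n\,s_n\cdot\mc O(s_n)=\mc O(1)$, while all orders $k\ge 2$ are bounded (indeed $o(1)$ for $k\ge3$); combined with $0<\psi_n\le 1$ this bounds every mixed derivative of the transformed integrand uniformly in $n$, which trivially dominates the $\gamma$-norm. Your route is shorter, avoids the asymptotic-expansion machinery entirely, needs much less regularity than the $C^{4d+3}$ smoothness implied by $p=2d$ (only $C^{\max(2,d)}$ near $x_\star$), and actually yields a stronger conclusion (uniform pointwise boundedness of all mixed derivatives). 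What the paper's heavier $L^2$/Laplace-method route buys is consistency with the companion lower-bound argument of Lemma \ref{lem:Prior_QMC_norm}, whose partition-and-coefficient bookkeeping it reuses, and in principle the exact asymptotic constants rather than only an upper bound.
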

Again, the proof is rather technical and can be found in Appendix \ref{sec:Laplace_QMC_norm}.
This proposition yields now our main result.
\begin{corollary}\label{cor:QMCprec}
Given the assumptions of Lemma \ref{lem:Laplace_QMC_norm}, a randomly shifted lattice rule with $N=2^m, m\in \mathbb N$, can be constructed via a component-by-component algorithm with product and order dependent weights at costs of $\mathcal O(dN\log N+d^2 N)$ operations, such that for $\kappa\in(1/2,1]$
\begin{equation}
\mathbb E_{\Delta}[(Z_n-\hat Z_{n,QMC})^2]^{1/2} \le n^{-\frac{d}{2}} \left( c_1  h(\tau)+ c_2  n^{-\frac{1}{2}}+ c_3 N^{- \frac{1}{2\kappa}}\right) %\in \mc O(n^{-d/2})%\,, \quad \kappa\in(1/2,1]\,
\end{equation}
with constants $c_1,c_2, c_3>0$ independent of $n$ and $h(\tau) = 1 - \mathrm{erf}(0.5 \sqrt{|\ln \tau}|)^d$. 
\end{corollary}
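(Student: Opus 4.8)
The plan is to split the root-mean-squared error into a \emph{deterministic} truncation error and a \emph{random} QMC quadrature error and to bound each using the pieces already in place. Writing $\hat Z_n = \int_{G_n}\Theta_n(x)\,\d x$ for the truncated integral, I would decompose
\[
	Z_n - \hat Z_{n,QMC} = (Z_n - \hat Z_n) + (\hat Z_n - \hat Z_{n,QMC}),
\]
and, since the first summand is independent of the random shift $\Delta$, apply Minkowski's inequality in $L^2_\Delta$ to obtain
\[
	\mathbb E_{\Delta}[(Z_n - \hat Z_{n,QMC})^2]^{1/2} \le |Z_n - \hat Z_n| + \mathbb E_{\Delta}[(\hat Z_n - \hat Z_{n,QMC})^2]^{1/2}.
\]
Throughout I would normalize $\Phi(x_\star)=0$, which is harmless since shifting $\Phi$ by its minimum rescales $Z_n$, $\hat Z_n$ and $\hat Z_{n,QMC}$ by the common factor $\e^{n\Phi(x_\star)}\ge 1$ and so only decreases the left-hand side. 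I would also use that for a uniform prior $I_n\equiv\Phi$ up to an additive constant, so that $x_n=x_\star$ and $C_n=H_\star^{-1}$ for every $n$, whence $\tilde\pi_n(x)=\exp(-\tfrac n2\|x-x_\star\|^2_{H_\star})$ and $\tilde Z_n = n^{-d/2}\sqrt{\det(2\pi H_\star^{-1})} = c'n^{-d/2}$.

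For the QMC error I would apply Theorem \ref{thm:QMC} to the transformed integrand $\Theta_n\circ g_n$ on $[-\tfrac12,\tfrac12]^d$. Pulling the constant Jacobian $C_{\text{trans},n}=\det(\nabla g_n)=(2|\ln\tau|/n)^{d/2}\sqrt{\det H_\star}$ out of both $\hat Z_n$ and $\hat Z_{n,QMC}$ gives
\[
	\mathbb E_{\Delta}[(\hat Z_n - \hat Z_{n,QMC})^2]^{1/2} \le C_{\text{trans},n}\,C(\gamma,\kappa)\,\|\Theta_n\circ g_n\|_\gamma\,N^{-1/(2\kappa)},
\]
where $C(\gamma,\kappa)$ is the explicit weight-dependent constant from Theorem \ref{thm:QMC}. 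Lemma \ref{lem:Laplace_QMC_norm} bounds $\|\Theta_n\circ g_n\|_\gamma\in\mc O(1)$ while $C_{\text{trans},n}=c_\tau n^{-d/2}$, so this contributes exactly the $c_3\,n^{-d/2}N^{-1/(2\kappa)}$ term; the construction and the $\mc O(dN\log N+d^2N)$ cost of the rule are inherited verbatim from Theorem \ref{thm:QMC}.

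The truncation term is the delicate one and is where I expect the real work to be, because $h(\tau)=1-\mathrm{erf}(0.5\sqrt{|\ln\tau|})^d$ was computed for the \emph{Gaussian} density $\tilde\pi_n$, whereas $Z_n-\hat Z_n=\int_{[-\frac12,\frac12]^d\setminus G_n}\Theta_n(x)\,\d x$ is the tail mass of the true weight $\Theta_n=\exp(-n\Phi)$. To bridge this gap I would factor $\Theta_n=\tilde\pi_n\,\e^{-nR_n}$ with $R_n=\Phi-\tfrac12\|\cdot-x_\star\|^2_{H_\star}$ and split $\e^{-nR_n}=1+(\e^{-nR_n}-1)$, giving
\[
	\int_{[-\frac12,\frac12]^d\setminus G_n}\Theta_n\,\d x \le \int_{[-\frac12,\frac12]^d\setminus G_n}\tilde\pi_n\,\d x + \int_{[-\frac12,\frac12]^d}\tilde\pi_n\,|\e^{-nR_n}-1|\,\d x.
\]
The first integral equals $\tilde Z_n\int_{\S_0\setminus G_n}\LAn(\d x)\le \tilde Z_n\,h(\tau)$ by the truncation identity established before the statement, and the second equals $\tilde Z_n\int|\e^{-nR_n}-1|\,\LAn(\d x)\le \tilde Z_n\,c\,n^{-1/2}$ by Lemma \ref{lem:conv_LA} with $p=1$, using that $\pi_n/\tilde\pi_n=\e^{-nR_n}$ here. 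Its hypotheses hold in this setting: on the bounded support of a uniform prior the majorant condition of Assumption \ref{assum:LA_Conv} is automatic, and the remaining items follow from the $C^{2p+3}$-smoothness and the unique nondegenerate minimizer of $\Phi$. Since $\tilde Z_n=c'n^{-d/2}$ this yields $|Z_n-\hat Z_n|\le n^{-d/2}(c_1 h(\tau)+c_2 n^{-1/2})$, and adding the QMC bound and collecting powers of $n$ produces the claimed estimate. The only side point worth a remark is that $G_n\subset[-\tfrac12,\tfrac12]^d$ for all large $n$, which holds because $G_n$ shrinks to the interior point $x_\star$.
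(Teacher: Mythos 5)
Your proof is correct and follows essentially the same route as the paper: the same triangle-inequality split into truncation and QMC errors, the same application of Theorem \ref{thm:QMC} with Lemma \ref{lem:Laplace_QMC_norm} and the Jacobian factor $C_{\text{trans},n}\sim c_\tau n^{-d/2}$, and the same treatment of the truncation term by factoring $\Theta_n=\tilde\pi_n\,\e^{-nR_n}$, bounding the Gaussian tail mass by $h(\tau)$ and the remainder via Lemma \ref{lem:conv_LA} with $p=1$. Your added checks (the normalization $\Phi(x_\star)=0$, verification that Assumptions \ref{assum:LA_0} and \ref{assum:LA_Conv} hold automatically for the uniform prior) only make explicit what the paper leaves implicit.
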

\begin{proof}
The triangle inequality leads to a separate estimation of the domain truncation error of the integral w.r.t. the posterior and the QMC approximation error, i.e.
 \begin{eqnarray*}
\mathbb E_{\Delta}[(Z_n-\hat Z_{n,QMC})^2]^{1/2}&\le& |Z_n - \hat Z_n| + \mathbb E_{\Delta}[(\hat Z_n-\hat Z_{n,QMC})^2]^{1/2}\,.
\end{eqnarray*}
The second term on the right hand side corresponds to the QMC approximation error. Thus, Theorem \ref{thm:QMC} and Lemma \ref{lem:Laplace_QMC_norm} imply
 \begin{eqnarray*}
\mathbb E_{\Delta}[(\hat Z_n-\hat Z_{n,QMC})^2]^{1/2}\le c_3 n^{-\frac d2} N^{- \frac{1}{2\kappa}} \,, \quad \kappa\in(1/2,1]\,,
\end{eqnarray*}
where the term $n^{-\frac d2}$ is due to $C_{\text{trans}, n} \sim c_\lambda n^{-\frac d2}$.
The domain truncation error can be estimated similar to the proof of Lemma \ref{lem:conv_LA}: 
\begin{eqnarray*}
	|Z_n -\hat Z_n|&=&|\int_{\S_0\setminus G_n} \Theta_n(x)\ \d x|\\
	&=&\big|\int_{\S_0 \setminus G_n} \Theta_n(x) \ \d x -\tilde{Z_n} \int_{\S_0 \setminus G_n}  \ \LAn(\mathrm dx)  +\tilde{Z}_n\int_{\S_0 \setminus G_n}  \ \LAn(\mathrm dx)  \Big|\\
&=&\big | \tilde{Z}_n  \int_{\S_0 \setminus G_n}e^{-n \Phi(x)} e^{n \tilde \Phi(x)} \LAn(\d x)-\tilde{Z}_n \int_{\S_0 \setminus G_n}   \ \LAn(\d x) +\tilde{Z}_n\int_{\S_0 \setminus G_n} \ \LAn(\d x)\Big|\\
&\le&\tilde{Z}_n\int_{\S_0 \setminus G_n} \big| e^{-n \tilde R_n(x)}  -1 \big| \LAn(\mathrm dx) +\tilde{Z}_n\big|\int_{\S_0 \setminus G_n} \ \LAn(\mathrm dx)\Big|\\
&\le&\tilde Z_n\int_{\S_0 } \big | e^{-n \tilde R(x)}  -1 \big | \LAn (\mathrm dx) + \tilde{Z}_n \left(1 - \mathrm{erf}(0.5 \sqrt{|\ln \tau}|)^d\right)
\end{eqnarray*}
where $\tilde{Z}_n=n^{-\frac{d}{2}}\sqrt{\det(2\pi H_\star^{-1})}$. The result follows by the proof of Lemma \ref{lem:conv_LA}.
\end{proof}

\begin{remark}
In the case of Gaussian priors, the transformation simplifies to $w=g_n(x)=x_\star + n^{\frac12}QD^{-\frac12}x$ due to the unboundedness of the prior support. However, to show an analogous result to Corollary \ref{cor:QMCprec}, uniform bounds w.r. to $n$ on the norm of the mixed first order derivatives of the preconditioned posterior density $\Theta_n(g_n(T^{-1}x))$ in a weighted Sobolev space, where $T^{-1}$ denotes the inverse cumulative distribution function of the normal distribution, need to be proven. See \cite{Kuo2016} for more details on the weighted space setting in the Gaussian case. Then, a similar result to Corollary \ref{cor:QMCprec} follows straightforwardly from \cite[Thm 5.2]{Kuo2016}. The numerical experiments shown in subsection \ref{sec:NE} suggest that we can expect a noise robust behavior of Laplace-based QMC methods also in the Gaussian case. This will be subject to future work. 
\end{remark}

\begin{remark}
Note that the QMC analysis in Theorem \ref{thm:QMC} can be extended to an infinite dimensional setting, cp. \cite{Kuo2016} and the references therein for more details. This opens up the interesting possibility to generalize the above results to the infinite dimensional setting and to develop methods with convergence independent of the number of parameters and independent of the measurement noise. Furthermore, higher order QMC methods can be used for cases with smooth integrands, cp. \cite{SchwabQMC,Gantner,DGLS17}, leading to higher convergence rates than the first order methods discussed here. In the uniform setting, it has been shown in \cite{SchillingsScaling} that the assumptions on the first order derivatives (and also higher order derivatives) of the transformed integrand are satisfied for Bayesian inverse problems related to a class of parametric operator equations, i.e., the proposed approach leads to a robust performance w.r.t. the size of the measurement noise for integrating w.r.t.~posterior measure resulting from this class of forward problems. The theoretical analysis of this setting will be subject to future work.    
\end{remark}

\begin{remark}[Numerical Quadrature]
Higher regularity of the integrand allows to use higher order methods such as sparse quadrature and higher order QMC methods, leading to faster convergence rates. In the infinite dimensional Bayesian setting with uniform priors, we refer to \cite{SchillingsSchwab2013,Schillings2014} for more details on sparse quadrature for smooth integrands. In the case of uniform priors, the methodology introduced above can be used to bound the quadrature error for the preconditioned integrand by the truncation error and the sparse grid error. 
\end{remark}

%% ------------------------------------------------------------------
%% NUMERICAL EXAMPLES
%% ------------------------------------------------------------------
\subsection{Examples}\label{sec:NE}
In this subsection we present two examples illustrating our previous theoretical results for importance sampling and quasi-Monte Carlo integration based on the \rev{prior} measure $\mu_0$ and the Laplace approximation $\LAn$ of the target measure $\mu_n$.
Both examples are Bayesian inference or inverse problems where the first one uses a toy forwad map and the second one is related to inference for a PDE model.

\subsubsection{Algebraic Bayesian inverse problem}\label{sec:Exam_Alg}
We consider inferring $x \in [-\frac 12, \frac 12]^d$ for $d=1,2,3,4$ based on a uniform prior $\mu_0 = \mathcal U([-\frac 12, \frac 12]^d)$ and a realisation of the noisy observable of $Y = F(X) + \varepsilon_n$ where $X\sim \mu_0$ and the noise $\varepsilon_n \sim N(0, n^{-1} \Gamma_d)$, $\Gamma_d = 0.1I_d$, are independent, and $F(x) = (F_1(x),\ldots,F_d(x))$ with 
\[
	F_1(x) = \exp(x_1/5), \quad
	F_2(x) = x_2 - x_1^2, \quad
	F_3(x) = x_3, \quad
	F_4(x) = 2x_4 + x_1^2,
\] 
for $x = (x_1,\ldots,x_d)$. 
The resulting posterior measure $\mu_n$ on $[-\frac 12, \frac 12]^d$ are of the form \eqref{equ:mu_simple_2} with
\[
	\Phi(x)
	=
	\frac12
	\|y -F(x)\|^2_{\Gamma_d^{-1}}.
\]
We used $y = G(0.25\cdot {\boldsymbol 1}_d)$ throughout where ${\boldsymbol 1}_d = (1,\ldots,1) \in \bbR^d$.
We then compute the posterior expectation of the quantity of interest $f(x) = x_1+\cdots+x_d$.
To this end, we employ importance sampling and quasi-Monte Carlo integration based on $\mu_0$ and the Laplace approximation $\LAn$ as outlined in the precious subsections.
We compare the output of these methods to a reference solution obtained by a brute-force tensor grid trapezoidal rule for integration.
In particular, we estimate the root mean squared error (RMSE) of the methods and how it evolves as $n$ increases. 

\emph{Results for importance sampling:} 
In order to be sufficiently close to the asymptotic limit, we use $N = 10^5$ samples for self-noramlized importance sampling.
We run $1,000$ independent simulations of the importance sampling integration and compute the resulting empirical RMSE.
In Figure \ref{fig:exam_alg_IS} we present the results for increasing $n$ and various $d$ for prior-based and Laplace-based importance sampling.
We obtain a good match to the theoretical results, i.e., the RMSE for choosing the prior measure as importance distribution behaves like $n^{d/4 - 1/2}$ in accordance to Lemma \ref{lem:IS_prior}.
Besides that the RMSE for choosing the Laplace approximation as importance distribution decays like $n^{1/2}$ after a preasymptotic phase.
This is relates to the statement of Theorem \ref {theo:conv_LAIS_stoch} where we have shown that the absolute error\footnote{We have also computed the empirical $L^1$-error which showed a similar behaviour as the RMSE.} decays in probability like $n^{1/2}$.
Note that the assumptions of Proposition \ref{propo:LAIS_conv_L2} are not satisfied for this example for all $d=1,2,3,4$.
\begin{figure}[htb]
\includegraphics[width = 0.49\textwidth]{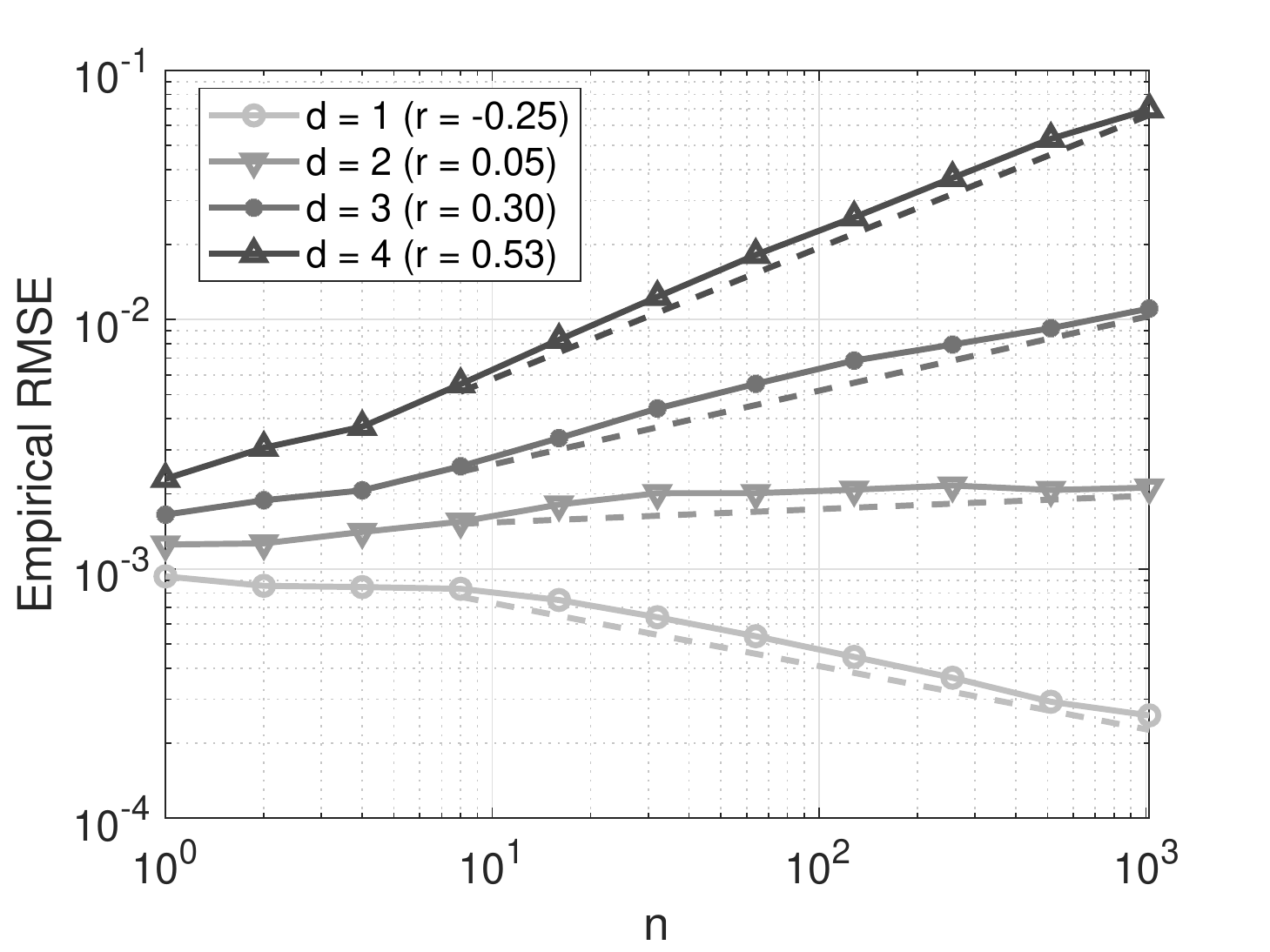}
\hfill
\includegraphics[width = 0.49\textwidth]{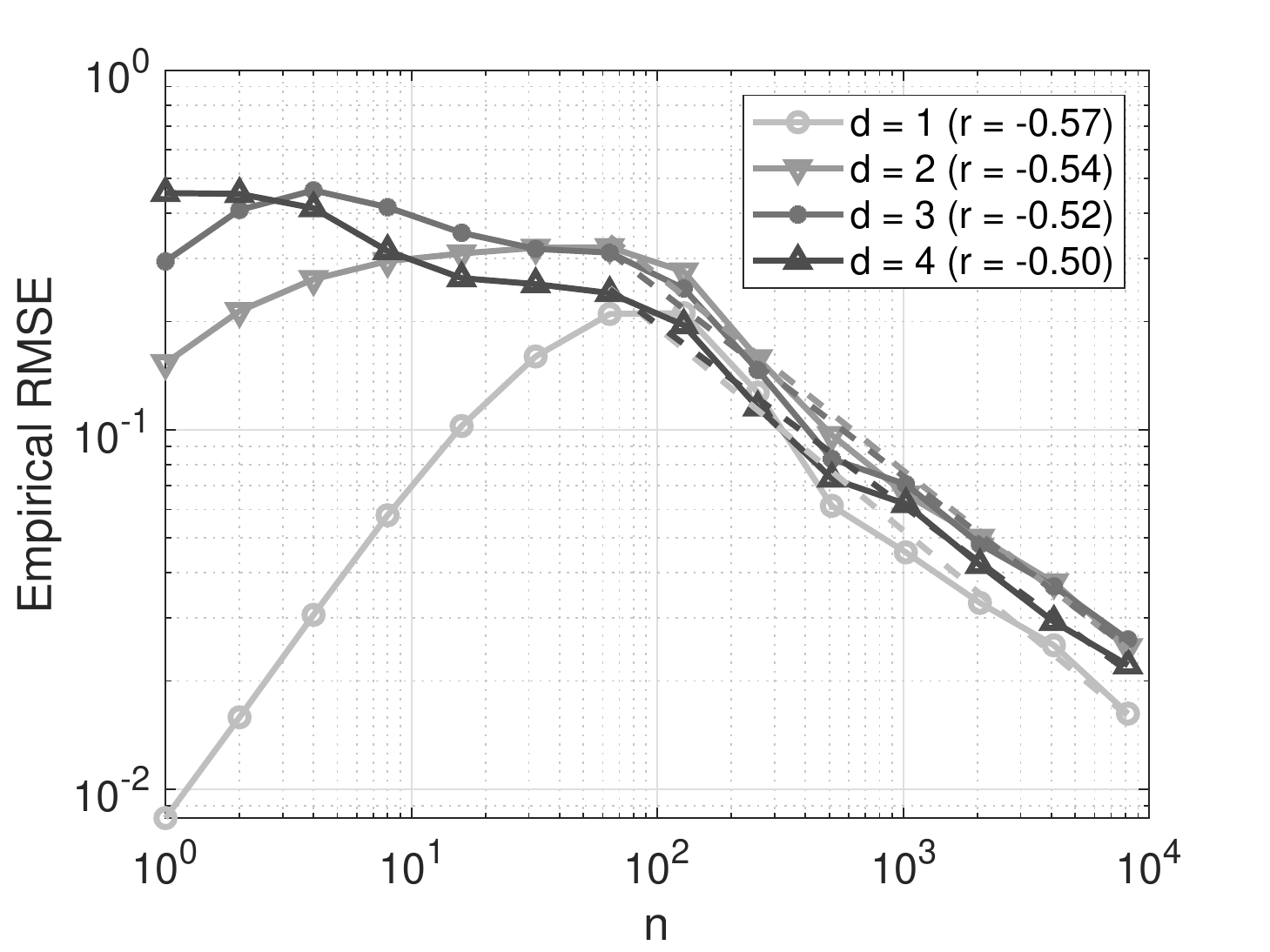}
\caption{Growth and decay of the empirical RMSE of prior-based (left) and Laplace-based (right) importance sampling for the example in Section \ref{sec:Exam_Alg} for decaying noise level $n^{-1}$ and various dimensions $d$.}
\label{fig:exam_alg_IS}
\end{figure}

\emph{Results for quasi-Monte Carlo:} 
We use $N = 2^{10}$ quasi-Monte Carlo points for prior- and Laplace-based QMC.
For the Laplace-case we employ a truncation parameter of $\tau = 10^{-6}$ and discard all transformed points outside of the domain $[-\frac 12, \frac 12]^d$.
Again, we run $1,000$ random shift simulations for both QMC methods and estimate the empirical RMSE.
However, for QMC we report the relative RMSE, since, for example, the decay of the normalization constant $Z_n\in\mc O(n^{-d/2})$ dominates the growth of the absolute error of prior QMC integration for the normalization constant.
In Figure \ref{fig:exam_alg_QMC_prior} and \ref{fig:exam_alg_QMC_LA} we display the resulting relative RMSE for the quantity related integral $Z_n'$, the normalization constant $Z_n$, i.e.,
\[
	Z'_n	 = \int_{[-\frac 12, \frac 12]^d} f(x) \ \exp(-n \Phi(x))\ \mu_0(\d x),
	\;
	Z_n	 = \int_{[-\frac 12, \frac 12]^d} 1 \ \exp(-n \Phi(x))\ \mu_0(\d x),
\]
and the resulting ratio $\frac {Z'_n}{Z_n}$ for increasing $n$ and various $d$ for prior-based and Laplace-based QMC.
For prior-based QMC we observe for dimensions $d \geq 2$ a algebraic growth of the relative error.
In the previous subsection we have proven that the corresponding classical error bound will explode which does not necessary imply that the error itself explodes---as we can see for $d=1$. 
However, this simple example shows that also the error will often grow algebraically with increasing $n$.
For the Laplace-based QMC we observe on the other hand in Figure \ref{fig:exam_alg_QMC_LA} a decay of the relative empirical RMSE.
By Corollary \ref{cor:QMCprec} we can expect that the relative errors stay bounded as $n\to \infty$.
This provides motivation for a further investigation.
In particular, we will analysize the QMC ratio estimator for $\frac{Z'_n}{Z_n}$ in a future work.
\begin{figure}[htb]
\includegraphics[width = \textwidth]{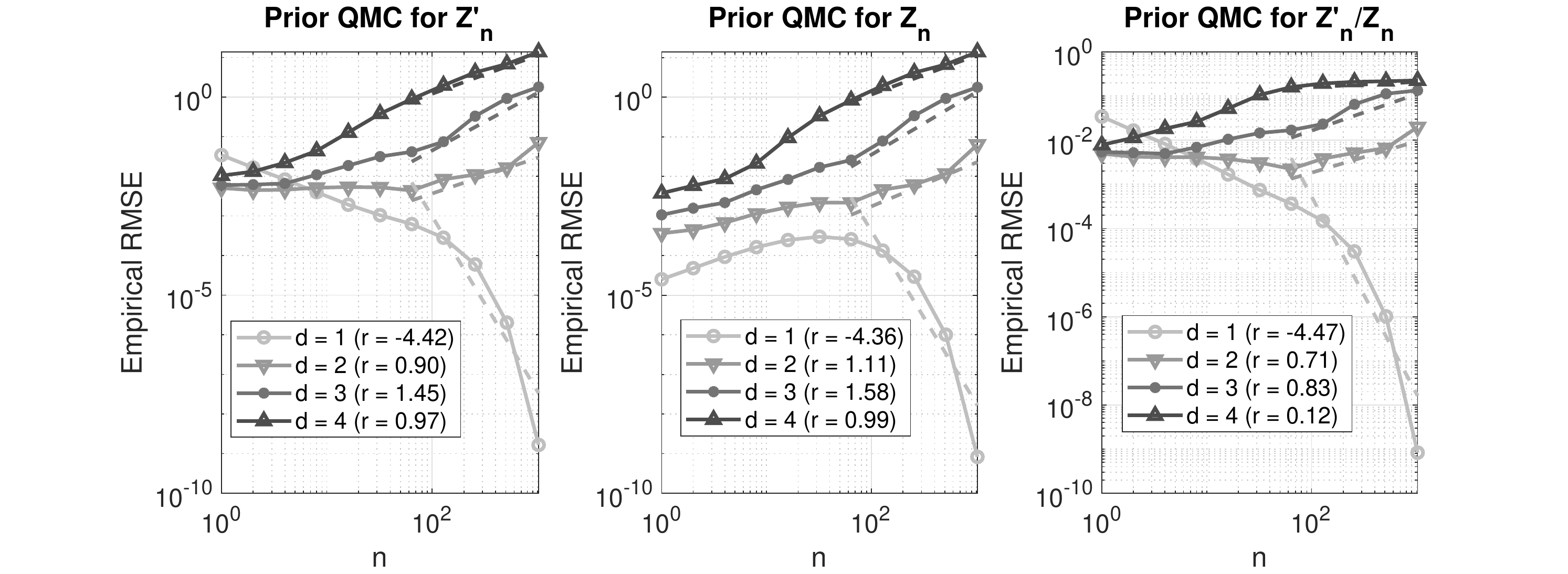}
\caption{Empirical relative RMSE of prior-based quasi-Monte Carlo for the example in Section \ref{sec:Exam_Alg} for decaying noise level $n^{-1}$ and various dimensions $d$.}
\label{fig:exam_alg_QMC_prior}
\end{figure}
\begin{figure}[htb]
\includegraphics[width = \textwidth]{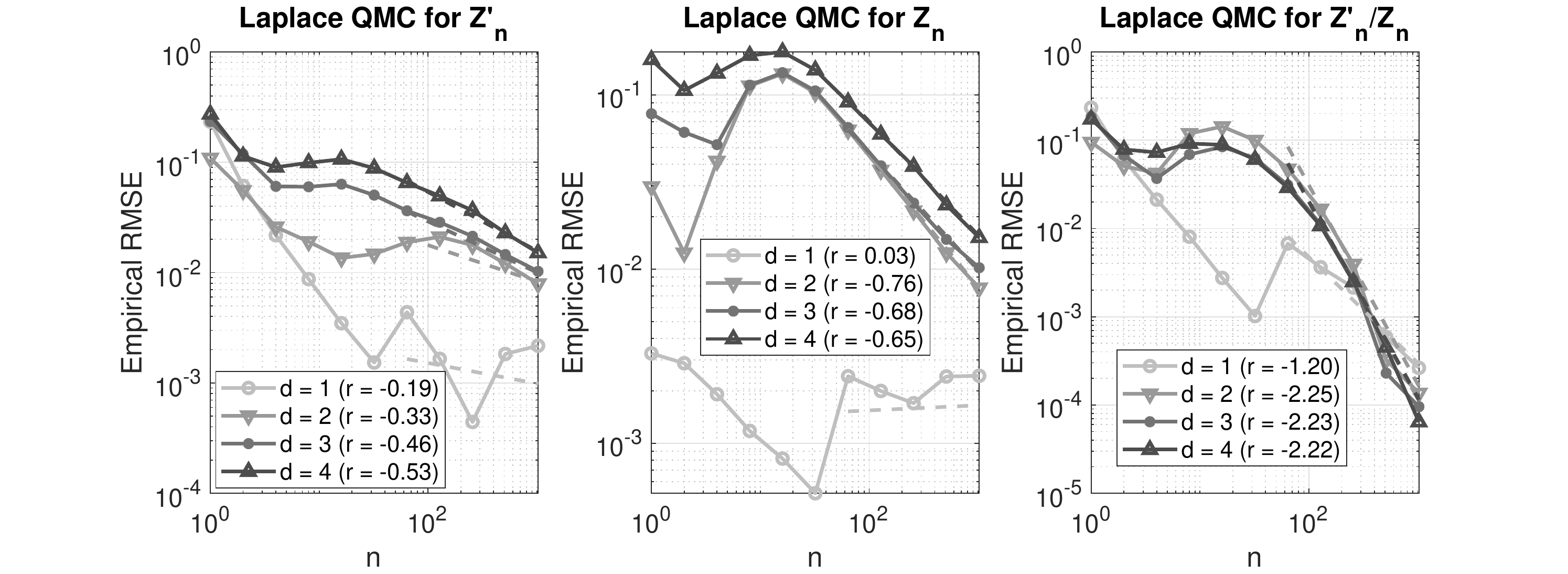}
\caption{Empirical relative RMSE of Laplace-based quasi-Monte Carlo for the example in Section \ref{sec:Exam_Alg} for decaying noise level $n^{-1}$ and various dimensions $d$.}
\label{fig:exam_alg_QMC_LA}
\end{figure}

\subsubsection{Bayesian inference for an elliptic PDE}
In the following we illustrate the preconditioning ideas from the previous section by %two examples from 
Bayesian inference with differential equations.
To this end we consider the following model parametric elliptic problem
\begin{eqnarray}\label{eq:forwardex}
-\mbox{div}(\hat u_d \nabla q)=f \quad \mbox{in } D\coloneqq[0,1]\, , \ q=0  \quad \mbox{in } \partial D\, , 
\end{eqnarray}
with $f(t)=100\cdot t$, $t\in[0,1]$, and diffusion coefficient
\[
  \hat u_d(t) = \exp\left(\sum_{k=1}^dx_k \ \psi_k(t) \right)\, , \qquad d\in\{1,2,3\},
\]
where $\psi_k(t)= \frac{0.1}{k} \sin(k\pi t)$ and the $x_k \in \bbR$, $k=1,\ldots,d$, are to be inferred by noisy observations of the solution $q$ at certain points $t_j \in [0,1]$.
For $d=1,2$ these observations are taken at $t_1 = 0.25$ and $t_2 = 0.75$ and for $d=3$ they are taken at $t_j \in \{0.125,0.25,0.375,0.6125,0.75,0.875\}$.
We suppose an additive Gaussian observational noise $\epsilon\sim\mathcal N(0, \Gamma_n)$ with noise covariance $\Gamma_n = n^{-1}\Gamma_{obs}$ and $\Gamma_{obs}\in\bbR^{2\times 2}$ or $\Gamma_{obs}\in\bbR^{7\times 7}$, respectively, specified later on.
In the following we place a uniform and a Gaussian prior $\mu_0$ on $\bbR^d$ and would like to integrate w.r.t.~the resulting posterior $\mu_n$ on $\bbR^d$ which is of the form \eqref{equ:mu_simple_2} with
\[
	\Phi(x)
	=
	\frac12
	\|y -F(x)\|^2_{\Gamma^{-1}_{obs}}
\]
where $F\colon \bbR^d \to \bbR^2$ for $d=1,2$, and $F\colon \bbR^d \to \bbR^7$ for $d=3$, respectively, denotes the mapping from the coefficients $x\coloneqq (x_k)_{k=1}^d$ to the observations of the solution $q$ of the elliptic problem above and the vector $y \in \bbR^2$ or $y\in\bbR^7$, respectively, denotes the observational data resulting from $y = F(x) + \epsilon$ with $\epsilon$ as above.
Our goal is then to compute the posterior expectation (i.e., w.r.t.~$\mu_n$) of the following quantity of interest $f\colon \bbR^d\to \bbR$: $f(u)$ is the value of the solution $q$ of the elliptic problem at $t=0.5$. 

\paragraph{Uniform prior.}
We place a uniform prior $\mu_0 = \mathcal U([-\frac 12, \frac 12]^d)$ for $d=1,2$ or $3$ and choose $\Gamma_{obs} = 0.01 I_2$ for $d=1,2$ and $\Gamma_{obs} = 0.01 I_7$ for $d=3$. 
We display the resulting posteriors $\mu_n$ for $d=2$ in Figure \ref{fig:unipost} illustrating the concentration effect of the posterior for various values of the noise scaling $n$ and the resulting transformed posterior with $\Phi\circ g_n$ based on Laplace approximation.
The truncation parameter is set to $\tau=10^{-6}$.
{We observe the almost quadratic behavior of the preconditioned posterior, as expected from the theoretical results.}
\begin{figure}[htb]
\includegraphics[width = 0.98\textwidth]{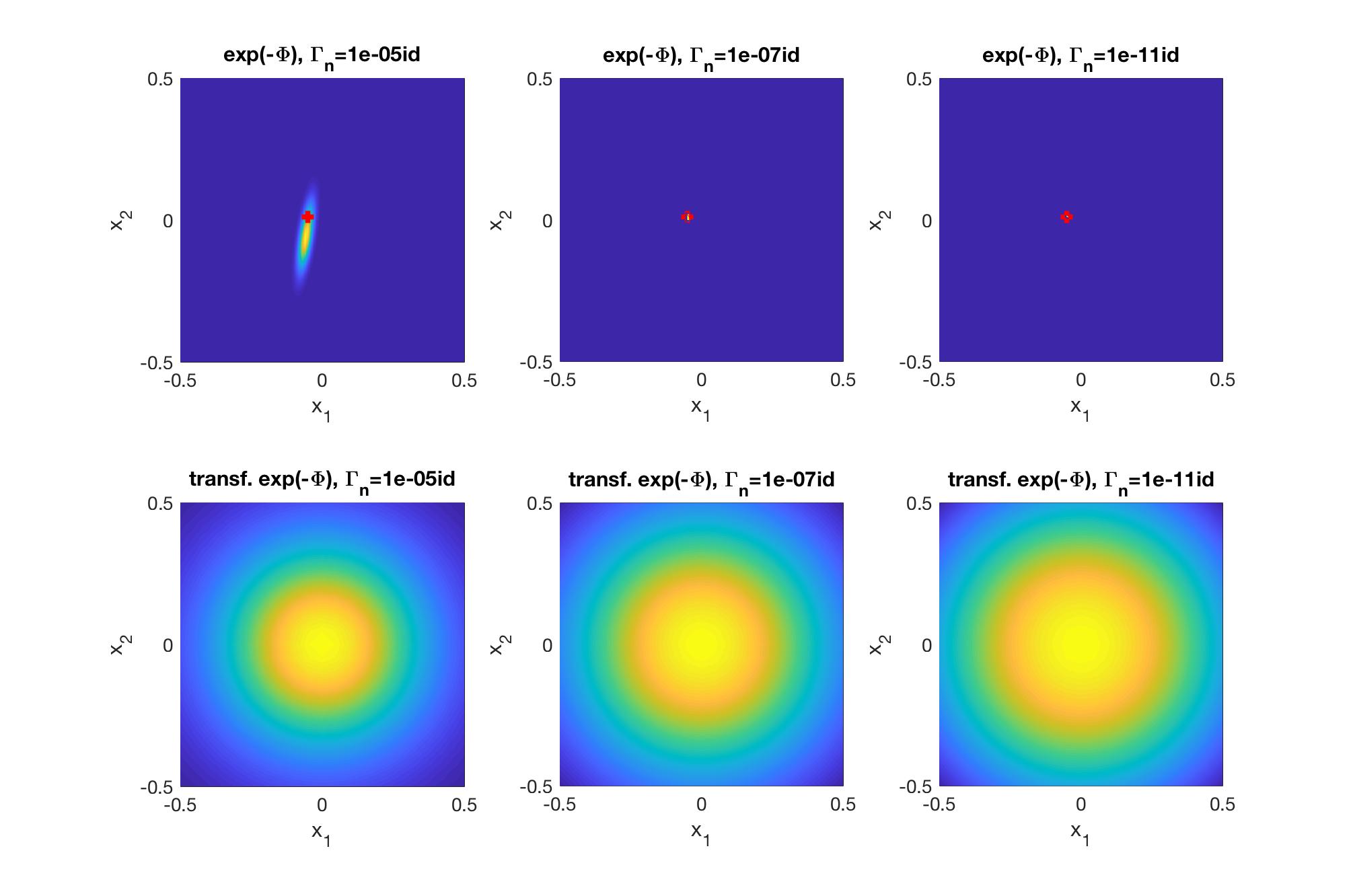}
\caption{The first row shows the posterior distribution for various values of the noise scaling, the second row shows the corresponding preconditioned posteriors based on Laplace approximation, 2d test case, uniform prior distribution, $\tau=10^{-6}$.}
\label{fig:unipost}
\end{figure}

We are now interested in the numerical performance of the Importance Sampling and QMC method based on the prior distribution compared to the performance of the preconditioned versions based on Laplace approximation. 
The QMC estimators are constructed by an off-the-shelf generating vector (order-2 randomly shifted weighted Sobolev space), which can be downloaded from \url{https://people.cs.kuleuven.be/~dirk.nuyens/qmc-generators/} (exod2\_base2\_m20\_CKN.txt). 
The reference solution used to estimate the error is based on a (tensor grid) trapezoidal rule with $10^6$ points in 1D, $4\cdot 10^6$ points in 2D in the original domain, i.e., the truncation error is quantified and in the transformed domain in 3D with $10^6$ points. 
Figure \ref{fig:uni} illustrates the robust behavior of the preconditioning strategy w.r.t. the scaling $1/n$ of the observational noise. 
Though we know from the theory that in the low dimensional case (1D, 2D), the importance sampling method based on the prior is expected to perform robust, we encounter numerical instabilities due to the finite number of samples used for the experiments. 
The importance sampling results are based on $10^6$ sampling points, the QMC results on $8192$ shifted lattice points with $2^6$ random shifts.
\begin{figure}[htb]
\centering
\includegraphics[width = 0.8\textwidth]{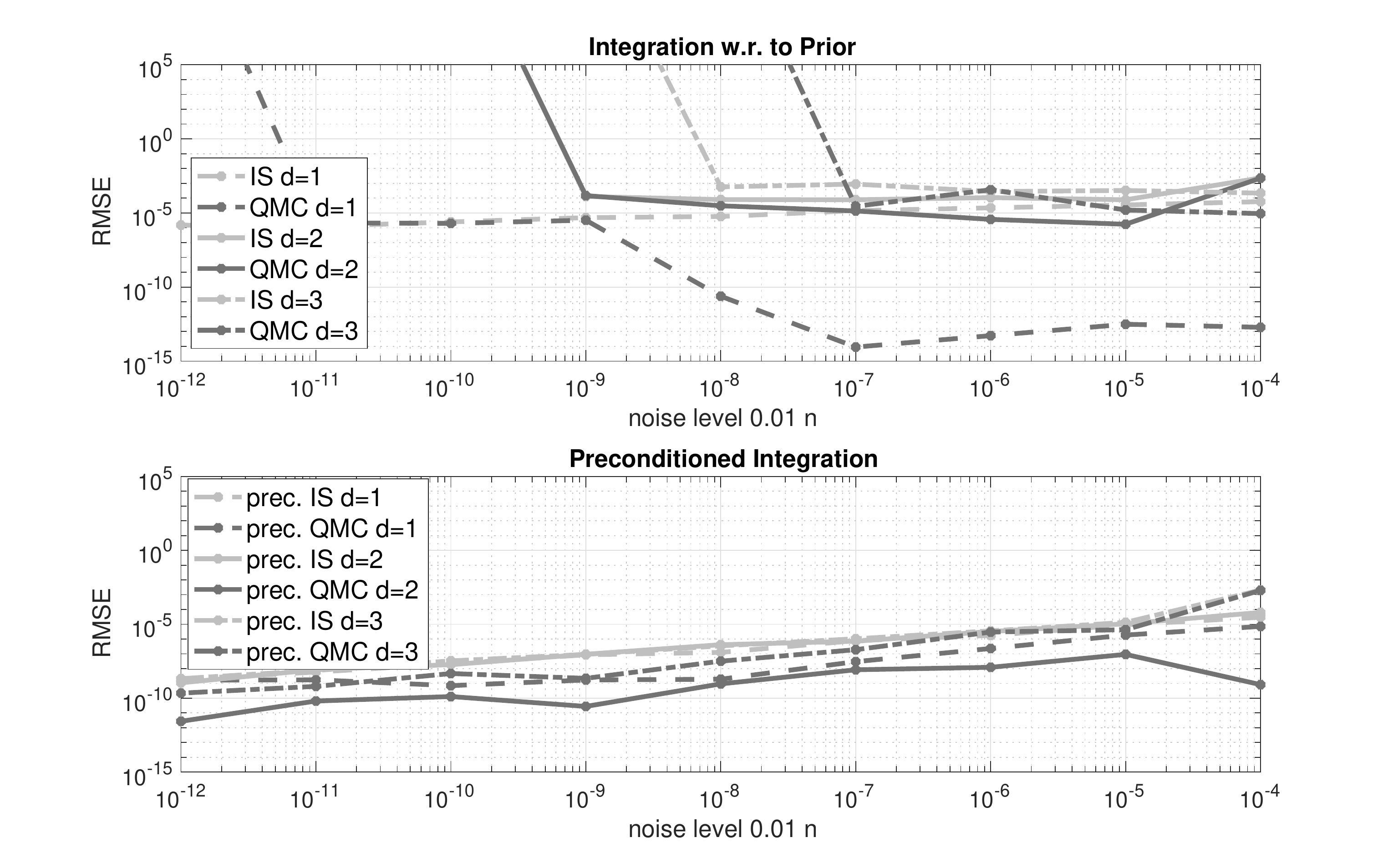}

\caption{The (estimated) root mean square error (RMSE) of the approximation of the quantity of interest for different noise levels ({$n=10^{2},\ldots,10^{10}$}) using the Importance Sampling strategy and QMC method for $d= 1,2,3$.}
\label{fig:uni}
\end{figure}

Figure \ref{fig:uniQMC} shows the RMSE of the normalization constant $Z_n$ using the preconditioned QMC approach with respect to the noise scaling $1/n$. 
We observe a numerical confirmation of the predicted dependence of the error w.r.t. the dimension (cp. Corollary \ref{cor:QMCprec}).

 \begin{figure}[htb]
\centering
\includegraphics[width = 0.8\textwidth]{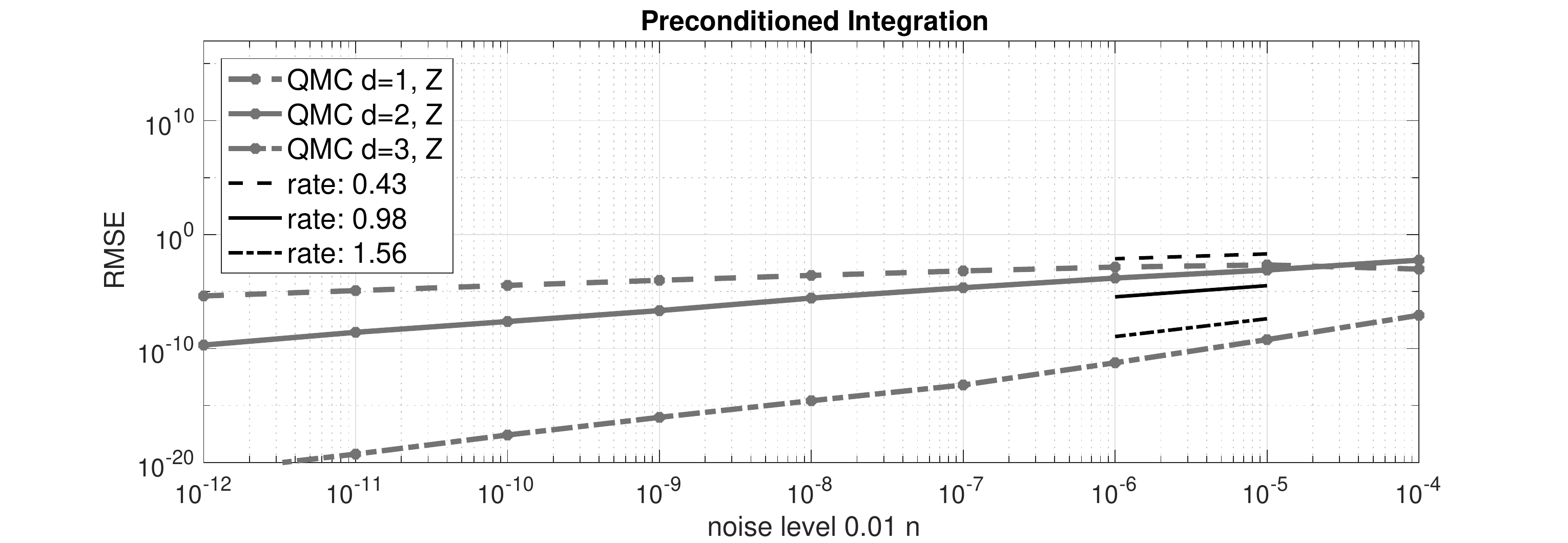}

\caption{The (estimated) root mean square error (RMSE) of the approximation of the normalization constant $Z$ for different noise levels ({$n=10^{2},\ldots,10^{10}$}) using the preconditioned QMC method for $d=1,2,3$.  }
\label{fig:uniQMC}
\end{figure}

We remark that the numerical experiments for the ratio suggest a behavior $n^{-1/2}$, i.e., a rate independent of the dimension $d$, of the RMSE for the preconditioned QMC approach, cp. Figure \ref{fig:uni}. This will be subject to future work.

\paragraph{Gaussian prior.}
{We choose as prior $\mu_0 = \mathcal N(0,I_2)$ for the coefficients $x=(x_1,x_2) \in \bbR^2$ for $\hat u_2$ in the elliptic problem above.}
For the noise covariance we set this time $\Gamma_{obs}=I_2$.
The performance of the prior based and preconditioned version of Importance Sampling is depicted in Figure \ref{fig:Gaussa}. Clearly, the Laplace approximation as a preconditioner improves the convergence behavior; we observe a robust behavior w.r.t. the noise level.

The convergence of the QMC approach is depicted in Fig \ref{fig:Gaussb}, showing a consistent behavior with the considerations in the previous section.

\begin{figure}[htb]
\centering
\includegraphics[width = 0.8\textwidth]{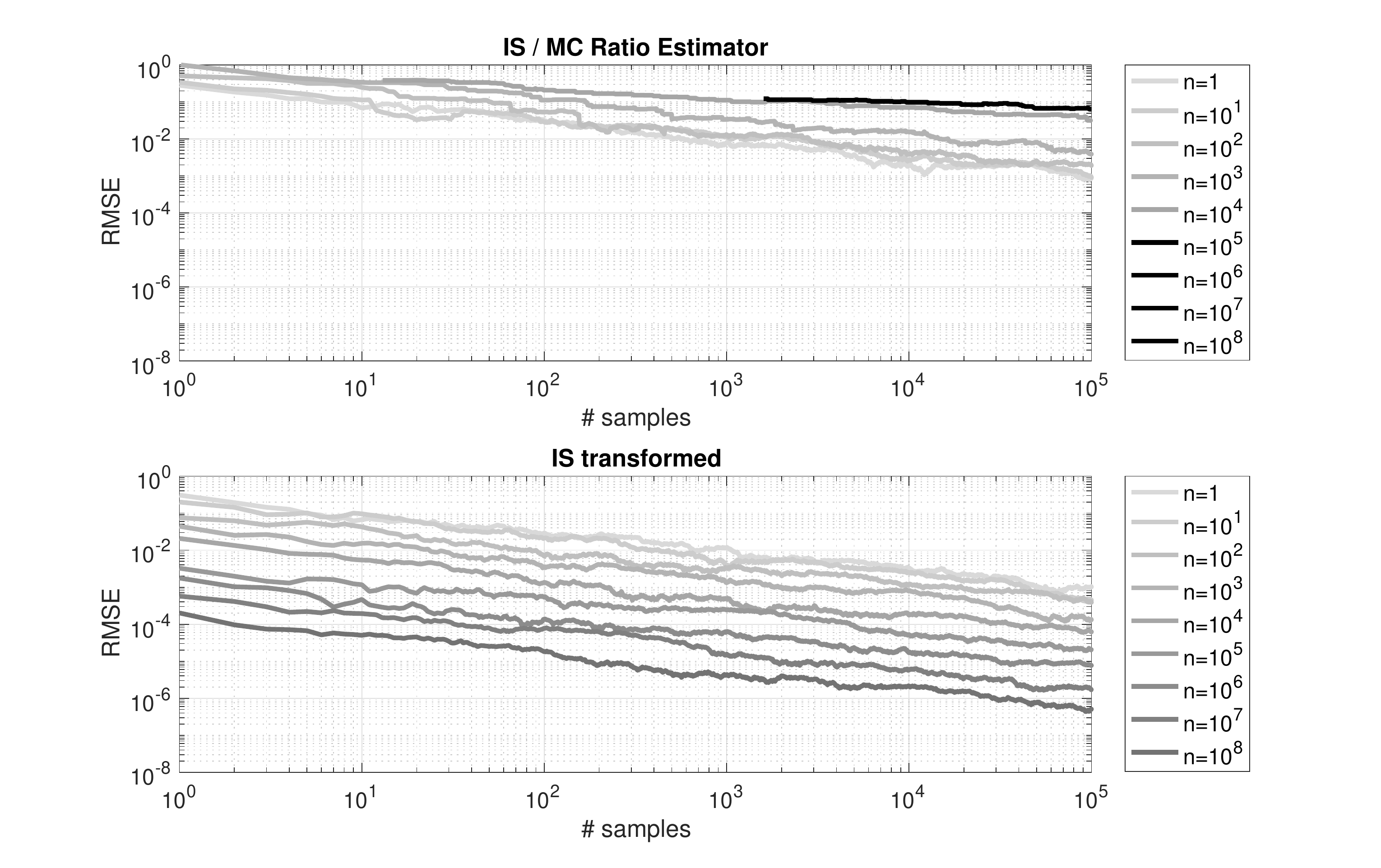}
\caption{The (estimated) root mean square error (RMSE) of the approximation of the quantity of interest for different noise levels ({$n=10^{0},\ldots,10^{8}$}) using the Importance Sampling strategy. The first row shows the result based on prior information (Gaussian prior), the second row the results using the Laplace approximation for preconditioning. The reference solution is based on a tensor grid Gauss--Hermite rule with $10^4$ points for the preconditioned integrand using the Laplace approximation.}
\label{fig:Gaussa}
\end{figure}
\begin{figure}[htb]
\centering
\includegraphics[width = 0.8\textwidth]{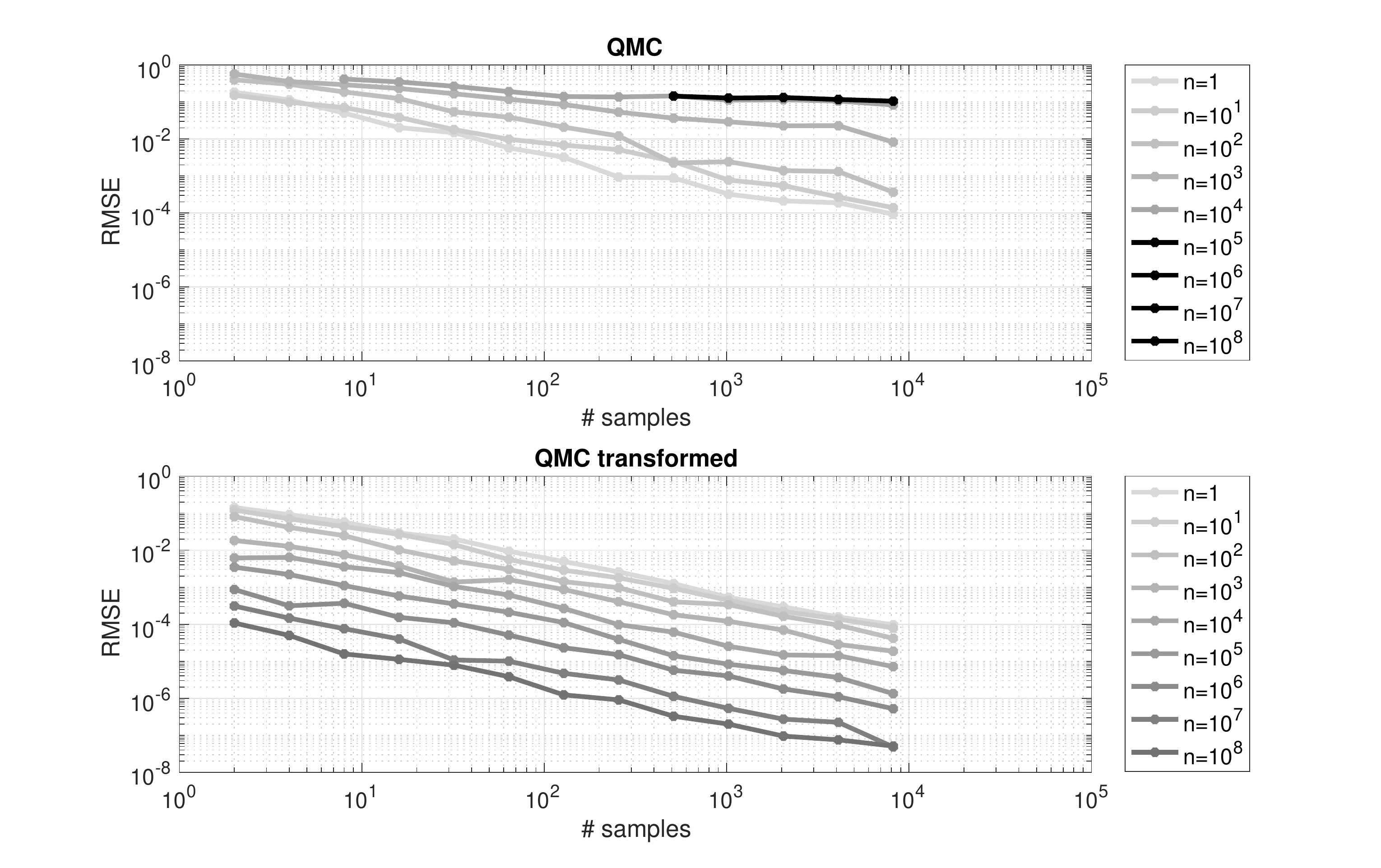}

\caption{The (estimated) root mean square error (RMSE) of the approximation of the quantity of interest for different noise levels ({$n=10^{0},\ldots,10^{8}$}) using the  QMC method (below). The first row shows the result based on prior information (Gaussian prior), the second row the results using the Laplace approximation for preconditioning. The reference solution is based on a tensor grid Gauss--Hermite rule with $10^4$ points for the preconditioned integrand using the Laplace approximation.}
\label{fig:Gaussb}
\end{figure}

%% ------------------------------------------------------------------
%% CONCLUSIONS
%% ------------------------------------------------------------------
\section{Conclusions and Outlook to Future Work}
This paper makes a number of contributions in the development of {numerical} methods for Bayesian inverse problems, which are robust w.r.t.~the size of the measurement noise {or the concentration of the posterior measure, respectively}. 
We analyzed the convergence of the Laplace approximation to the posterior distribution in Hellinger distance. 
This forms the basis for the design of variance robust methods. 
In particular, we proved that Laplace-based importance sampling behaves well in the small noise or large data size limit, respectively. 
For uniform priors, Laplace-based QMC methods have been developed with theoretically and numerically proven errors decaying {with the noise level or concentration of the measure, repestively.} %like $n^{-1/2}$. 
Some future directions of this work include the development of noise robust Markov chain Monte Carlo methods and the combination of dimension independent and noise robust strategies. This will require the study of the Laplace approximation in infinite dimensions in a suitable setting. Finally, we could study in more details the error in the ratio estimator using Laplace-based QMC methods. The use of higher order QMC methods has been proven to be a promising direction for a broad class of Bayesian inverse problems and the design of noise robust versions is an interesting and potentially fruitful research direction.

\paragraph{Acknowledgements.}
The authors \rev{are grateful to} the Isaac Newton Institute for Mathematical Sciences for support and hospitality during the programme Uncertainty quantification for complex systems: theory and methodologies when work on this paper was undertaken. This work was supported by: EPSRC grant numbers EP/K032208/1 and EP/R014604/1.
\rev{Moreover, the authors would like to thank the anonymous referees and Remo Kretschmann for helpful comments which improved the paper significantly.} Furthermore, BS was supported by the DFG project 389483880 and by the DFG RTG1953 "Statistical Modeling of Complex
Systems and Processes".

%% ------------------------------------------------------------------
%% APPENDIX
%% ------------------------------------------------------------------
\appendix

%% ------------------------------------------------------------------
%% A: CONCENTRATION
%% ------------------------------------------------------------------
\section{Concentration of the Laplace Approximation}\label{sec:TIS}

Due to the well-known \emph{Borell-TIS inequality} for Gaussian measures on Banach spaces \cite[Chapter 3]{LedouxTalagrand2002} we obtain the following useful concentration result for the Laplace approximation.

\begin{proposition}
\label{propo:con_LA_Phi_n}
Let Assumption \ref{assum:LA_0} and Assumption \ref{assum:LA_Conv} be satisfied.
Then, for any $r>0$ there exists an $c_r >0$ such that
\[
	\LAn\left( B^c_r(x_n) \right)
	\in
	\mc O\left(\e^{-c_r n} \right),
\]
{where $B^c_r(x_n) = \{x\in\bbR^d\colon \|x-x_n\|>r\}$.}
\end{proposition}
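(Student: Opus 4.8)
The plan is to center the Gaussian and reduce the claim to a tail estimate for a centered Gaussian random vector whose covariance shrinks like $n^{-1}$, to which the Borell--TIS inequality applies directly. Writing $Y \coloneqq X - x_n$ for $X\sim\LAn$, we have $Y\sim\mc N(0, n^{-1}C_n)$ and
\[
	\LAn\left(B^c_r(x_n)\right) = \bbP\left(\|Y\| > r\right),
\]
so it suffices to bound this tail probability.

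First I would identify the two quantities that Borell--TIS requires, namely the expected norm $m_n \coloneqq \bbE\|Y\|$ and the weak variance $\sigma_n^2 \coloneqq \sup_{\|v\|\leq 1}\bbE[(v^\top Y)^2]$. By Jensen's inequality, $m_n \leq (\bbE\|Y\|^2)^{1/2} = (n^{-1}\tr C_n)^{1/2}$, and since $\bbE[(v^\top Y)^2] = n^{-1}\,v^\top C_n v$ we get $\sigma_n^2 = n^{-1}\lambda_{\max}(C_n)$. The crucial input is the convergence $C_n \to H_\star^{-1}$ noted after Assumption~\ref{assum:LA_Conv}: it guarantees that $\tr C_n$ and $\lambda_{\max}(C_n)$ remain bounded, say $\lambda_{\max}(C_n)\leq L < \infty$ for all large $n$, so that $\sigma_n^2 \leq L/n$ while $m_n \in \mc O(n^{-1/2})$, and in particular $m_n \to 0$.

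Then I would apply the Borell--TIS inequality, which yields $\bbP(\|Y\| > m_n + u) \leq \exp(-u^2/(2\sigma_n^2))$ for every $u>0$. Choosing $n$ large enough that $m_n \leq r/2$ and setting $u = r - m_n \geq r/2$ gives
\[
	\LAn\left(B^c_r(x_n)\right) = \bbP\left(\|Y\| > r\right) \leq \exp\left(-\frac{(r-m_n)^2}{2\sigma_n^2}\right) \leq \exp\left(-\frac{n\, r^2}{8L}\right),
\]
so the claim holds with $c_r = r^2/(8L)$.

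The argument is short, and the only real point is the interplay of the two scales: both the mean radius $m_n$ and the fluctuation scale $\sigma_n$ vanish as $n\to\infty$ (like $n^{-1/2}$), whereas the target radius $r$ is fixed; it is precisely the uniform boundedness of $\lambda_{\max}(C_n)$---a consequence of $C_n\to H_\star^{-1}$ with invertible limit---that makes the exponent grow linearly in $n$. The main (and rather minor) obstacle is thus to justify this uniform spectral bound, rather than any delicate estimate; note also that in finite dimensions one could bypass Borell--TIS entirely by diagonalizing $C_n$ and applying a standard one-dimensional Gaussian tail bound coordinatewise, which would produce the same exponential decay.
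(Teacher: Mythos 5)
Your proof is correct and follows essentially the same route as the paper's: center the Gaussian, apply the Borell--TIS inequality with $\sigma_n^2 = \|n^{-1}C_n\|$, bound $\bbE\|Y\|$ by $(n^{-1}\tr C_n)^{1/2}$ via Jensen, and use the uniform spectral bound coming from $C_n \to H_\star^{-1}$ to absorb the mean into $r/2$ for large $n$, arriving at the same constant $c_r = r^2/(8L)$. No substantive differences.
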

\begin{proof}
Let $X_n \sim \mc N\left(0, n^{-1} C_n\right)$.
Then
\[
	\LAn\left( B^c_r(x_n) \right)
	= 
	\bbP\left( \|X_n\| > r \right).
\]
We now use the well-known concentration of Gaussian measures, namely,
\[
	\bbP\left( \left| \|X_n\| - \ev{\|X_n\|}\right| > r \right) \leq 2\exp\left( -\frac{r^2}{2\sigma_n^2}\right),
\]
where $\sigma_n^2 \coloneqq \sup_{\|x\| \leq 1} \ev{|x^\top X_n|^2}$, see \cite[Chapter 3]{LedouxTalagrand2002}.
There holds $\sigma_n^2 = \lambda_\text{max}(n^{-1}C_n) = \|n^{-1}C_n\|$ and we get
\begin{align*}
	\bbP\left( \|X_n\| > r + \ev{\|X_n\|} \right)
	& = 
	\bbP\left( \|X_n\| - \ev{\|X_n\|}> r \right)
	\leq \bbP\left( \left| \|X_n\| - \ev{\|X_n\|} \right| > r\right)\\
	& \leq 2\exp\left( - n \frac{r^2}{2\|C_n\|}\right).
\end{align*}
Due to Assumption \ref{assum:LA_Conv}, i.e., $C_n\to H^{-1}_\star > 0$, there exists a finite constant $0<c$ such that $\|C_n\|\geq c$ for all $n\in\bbN$.
Analogously, there exist a constant $K<\infty$ such that $\tr(C_n) \leq K$ for all $n$.
The latter implies
\[
	\ev{\|X_n\|}\leq\ev{\|X_n\|^2}^{{1/2}} = \tr(n^{-1}C_n)^{1/2} \leq n^{-1/2}\sqrt{K}.
\]
Hence, for an arbitrary $r$ let $n_0$ be such that $\ev{\|X_n\|}\leq r/2$ for all $n\geq n_0$, which yields
\begin{align*}
	\bbP\left( \|X_n\| > r \right)
	\leq 
	\bbP\left( \|X_n\| > \frac r2 + \ev{\|X_n\|} \right)
	\leq \exp\left( -n \frac{r^2}{8c}\right).	
\end{align*}
\end{proof}

%% ------------------------------------------------------------------
%% A: PROOF OF TECHNICAL LEMMA
%% ------------------------------------------------------------------

\section{Proofs}\label{sec:appendix_proofs}
We collect the rather technical proofs in this appendix.

\subsection{Proof of Lemma \ref{lem:conv_LA}}\label{sec:proof_lem}
Recall that we want to bound
\begin{align*}
	J_0(n) & \coloneqq\LAn(\S_0^c),\\
	J_1(n) & \coloneqq\int_{B_r(x_n) \cap \S_0} \left|\e^{-nR_n(x)/p} -1 \right|^{p}\ \LAn(\d x),\\
	J_2(n) & \coloneqq \int_{B^c_r(x_n)\cap \S_0} \left|\e^{-n R_n(x)/p} -1 \right|^p\ \LAn(\d x)
\end{align*}
where $R_n(x) \coloneqq I_n(x) - \tilde I_n(x) = I_n(x) - I_n(x_n) - \frac 12 \|x-x_n\|^2_{C^{-1}_n}$ and $r>0$ is an at the moment arbitrary radius which will be specified in the following first paragraph.

\paragraph{Bounding $J_1$.} 
Due to $\Phi_n, \log \pi_0 \in C^3(\S_0; \bbR)$, we have that for any $x\in B_r(x_n)$ there exists a $\xi_{x,n} \in B_r(x_n)$ such that
\[
	\left|R_n(x)\right| \leq \frac 16 \|\nabla^3 I_n(\xi_{x,n})\| \ \|x-x_n\|^3.
\]
Moreover, since $x_n \to x_\star$ there exists an $0<r_0<\infty$ such that
\[
	B_r(x_n) \subset B_{r_0}(0)\qquad \forall n\in\bbN.
\]
Hence, the local uniform boundedness of $\|\nabla^3 I_n(\cdot)\|$, see Assumption \ref{assum:LA_Conv}, implies the existence of a finite $c_3 > 0$ such that for sufficiently large $n$, i.e., $n\geq n_r$, we have
\[
	|R_n(x)| \leq c_3\ \|x-x_n\|^3
	\qquad \forall x\in B_r(x_n). %\ \forall n\in\bbN.
\]
Thus, we obtain, due to $|\e^{-t} -1| = 1 - \e^{-t}  \leq \e^{t} -1$ for $t\geq0$,
\[
	\left|\e^{-n R_n(x)/p} -1\right|
	\leq
	\e^{n c_3 \ \|x-x_n\|^3/p} -1,
\]
which yields
\begin{align*}
	J_1(n) 
	& \leq \int_{B_r(x_n)} \left(\e^{n c_3 \ \|x-x_n\|^3/p} -1 \right)^p\, \LAn(\d x)\\
	& \leq \int_{B_r(x_n)} \left(1 - \e^{- n c_3 \ \|x-x_n\|^3/p} \right)^p\ \e^{-n (\frac 12 \|x-x_n\|^2_{C^{-1}_n} - c_3 \|x-x_n\|^3)}\ \frac{\d x}{\tilde Z_n}.
\end{align*}
Now, since $C^{-1}_n \to H_\star > 0$, there exists for sufficiently large $n$ a $\gamma >0$ such that
\[
	\frac 12 \|x-x_n\|^2_{C^{-1}_n} \geq \gamma \|x-x_n\|^2
	\qquad
	\forall x\in\bbR^d.
\]
Hence, for $x\in B_r(x_n)$, i.e., $\|x-x_n\|\leq r$, we get
\[
	\frac 12 \|x-x_n\|^2_{C^{-1}_n} - c_3\|x-x_n\|^3
	\geq (\gamma -c_3r) \|x-x_n\|^2.
\]
By choosing $r \coloneqq \frac \gamma{2c_3}$ we obtain further
\[
	J_1(n) 
	\leq \int_{B_r(x_n)} \left(1 - \e^{- n c_3 \ \|x-x_n\|^3/p} \right)^p\ \e^{- n \gamma \|x-x_n\|^2/2}\ \frac{\d x}{\tilde Z_n}.
\]
Let us now introduce the auxiliary Gaussian measure $\nu_{n} \coloneqq \mc N \left(0, \frac 1{n\gamma} I\right)$ with which we get
\begin{align*}
	J_1(n) 
	& \leq \frac{\sqrt{\det(2\pi (n\gamma)^{-1}\, I)}}{\tilde Z_n} \int_{\bbR^d} \left(1 - \e^{- nc_3\ \|x\|^3/p} \right)^p\ \nu_n(\d x).
\end{align*}
There holds now
\begin{align*}
	\lim_{n\to \infty}
	\frac{\sqrt{\det(2\pi (n\gamma)^{-1}\, I)}}{\tilde Z_n}
	& =
	\lim_{n\to \infty}
	\frac{n^{-d/2}\sqrt{\det(2\pi \gamma^{-1}I)}}{n^{-d/2}\sqrt{\det(2\pi C_n)}}
	=
	\frac{\sqrt{\det(\gamma^{-1}I)}}{\sqrt{\det\left(2\pi H_\star\right)}} < \infty
\end{align*}
due to the continuity of the determinant and $H_\star >0$.
Moreover, since $1-\e^{-t}\leq t$ for $t\geq0$ we obtain with $\xi \sim \mc N(0, I)$ that 
\begin{align*}
	\int_{\bbR^d} \left(1 - \e^{- nc_3 \ \|x\|^3/p} \right)^p\  \nu_n(\d x)
	&\leq
	\int_{\bbR^d} (nc_3/p)^p \ \|x\|^{3p}\  \nu_n(\d x)
	= n^p (c_3/p)^p \ \ev{\|(\gamma n)^{-1/2} \xi\|^{3p}}\\
	& = n^{-p/2} \frac {c^p_3}{p^p\gamma^{3p/2}} \ \ev{\|\xi\|^3} \in \mc O(n^{-p/2}).
\end{align*}
This yields $J_1(n)\in \mc O(n^{-p/2})$ for the particular choice $r = \frac \gamma{2c_3}$. In the following two paragraphs we will use exactly this particular radius.

\paragraph{Bounding $J_0$.}
Due to Assumption \ref{assum:LA_Conv}, we have $x_n \to x_\star$ as $n\to \infty$.
Hence, there exists an $n_0 < \infty$ such that $\|x_n - x_\star\| \leq r/2$ for all $n\geq n_0$.
This implies by Assumption \ref{assum:LA_Conv}
\[
	B_{r/2}(x_n) \subseteq 
	B_{r}(x_\star) \subseteq 
	\S_0
\]
and, hence, $\S_0^c \subseteq B^c_{r/2}(x_n)$.
By Proposition \ref{propo:con_LA_Phi_n}, we obtain
\begin{align*}
	J_0(n) & =\LAn(\S_0^c)
	\leq 
	\LAn( B^c_{r/2}(x_n))
	\in \mc O(\e^{-c_{r/2}n}),
\end{align*}
with a $0<c_{r/2}<\infty$.

\paragraph{Bounding $J_2$.}
We divide the set $B_r^c(x_n)\cap \S_0$ into several subsets in order to bound $J_2(n)$.
First, we define
\[
	\P_n
	\coloneqq
	\{x\in B^c_r(x_n) \cap \S_0 \colon R_n(x) > 0\},
	\qquad
	\P^c_n
	\coloneqq 
	\{x\in B^c_r(x_n) \cap \S_0 \colon R_n(x) \leq 0\},
\]
i.e., $B^c_r(x_n) \cap \S_0= \P_n \dot{\cup} \P^c_n$, and notice that
\[
	\left|\e^{-n R_n(x)/p} -1 \right|
	=
	1 - \e^{-n R_n(x)/p}
	\leq 1
	\qquad
	\forall x \in \P_n.
\]
Hence, due to Propostion \ref{propo:con_LA_Phi_n} there exists a $c_{r} \in (0, \infty)$ such that
\begin{align*}
	\int_{\P_n} \left|\e^{-n R_n(x)/p} -1 \right|^p\ \LAn(\d x)
	& \leq \LAn(\P_n) 
	\leq \LAn(B^c_r(x_n))
	\in \mc O(\e^{-c_{r}n}).
\end{align*}
\begin{align*}
	\int_{\PM^c_n} \left|\e^{-n R_n(x)/p} -1 \right|^p\ \LAn(\d x)
	& \leq \int_{\PM^c_n} \e^{-n R_n(x)} \ \LAn(\d x)
	\rev{ = \frac 1{\widetilde Z_n} \int_{\PM^c_n} \e^{-n I_n(x)} \ \d x}
\end{align*}

\rev{
We now prove that 
\begin{equation}\label{equ:tail_bound_proof}
	\int_{\PM^c_n} \e^{-n I_n(x)} \ \d x
	\in \mc O(\exp(- \epsilon \delta_r n ) )
\end{equation}
with $\epsilon\in(0,1)$ as in Assumption \ref{assum:LA_Conv}.
To this end, we observe that due to $I_n(x) \geq \delta_r$ for all $x\in \PM^c_n \subset B^c_r(x_n)$ the functions
\[
	g_n(x) := \e^{-n I_n(x)} \e^{n \epsilon \delta_r } \leq \e^{- n (1-\epsilon) \delta_r}
	\qquad
	x \in \PM^C_n,
\]
converge pointwise to zero as $n\to\infty$. 
Moreover, $I_n(x) \geq \delta_r$ yields
\[
	g_n(x) = 
	\e^{-n I_n(x)}  \e^{n \epsilon \delta_r}
	\leq
	 \e^{-n (1-\epsilon) I_n(x)}
	\leq q^{1-\epsilon}(x)
\]
with the bounding function $q$ introduced in Assumption \ref{assum:LA_Conv}.
Thus, since $q^{1-\epsilon}$ is integrable we obtain by Lebesgue's dominated convergence theorem
\[
	\e^{n \epsilon \delta_r} \int_{\PM^c_n} \e^{-n I_n(x)} \ \d x
	= \int_{\PM^c_n} g_n(x) \ \d x
	\to 0
\]
as $n\to\infty$.
Hence, \eqref{equ:tail_bound_proof} holds.
Since $\widetilde Z_n \in \mc O(n^{-d/2})$ we get in summary, 
\begin{align*}
	J_2(n) 
	& =
	\int_{\PM_n} \left|\e^{-n R_n(x)/p} -1 \right|^p\ \LAn(\d x)
	+ 
	\int_{\PM^c_n} \left|\e^{-n R_n(x)/p} -1 \right|^p\ \LAn(\d x)\\
	& \leq
	\int_{\PM_n} 1^p\ \LAn(\d x)
	+ 
	\frac 1{\widetilde Z_n} \int_{\PM^c_n} \e^{-n I_n(x)}\ \d x\\
	& \in \mc O(\e^{-c_{r}n} + \e^{- n \epsilon \delta_r} n^{d/2})
	\subseteq \mc O(\e^{- n c_{r,\epsilon}} n^{d/2} )
\end{align*}
with $c_{r,\epsilon} := \min\{c_r, \epsilon\delta_r\}>0$.
}

\subsection{Proof of Theorem \ref{theo:conv_H_star}}
\label{sec:conv_H_star_proof}
A straightforward calculation, see also \cite[Exercise 1.14]{Pardo2006}, yields that for Gaussian measures $\mc N_{a,Q} \coloneqq \mc N(a,Q)$, $\mc N_{b,Q} \coloneqq \mc N(b, Q)$ and $\mc N_{a,R} \coloneqq \mc N(a, R)$ we have
\begin{align*}
	d_\mathrm{H}(\mc N_{a,Q}, \mc N_{b,Q}) 
	& = 
	\sqrt{2 - 2 \exp\left( \frac 18 \|a-b\|^2_{Q^{-1}}\right)},\\
	d_\mathrm{H}(\mc N_{a,Q}, \mc N_{a,R}) 
	& = 
	\sqrt{2- \frac2{\det\left(\frac 12 (Q^{-1/2}R^{1/2} + Q^{1/2}R^{-1/2})\right)}},
\end{align*}

By \eqref{equ:Conv_mun_2} we obtain for $\tilde \mu_n \coloneqq \mc N(x_n, \frac 1n B_n)$ that
\[
		\lim_{n\to \infty} d_\mathrm{H}(\mu_n, \tilde \mu_n) = 0
		\quad
		\text{ iff }
		\quad
		\lim_{n\to \infty} \det\left(\frac 12 (C^{-1/2}_n B_n^{1/2} + C_n^{1/2} B_n^{-1/2})\right) = 1.
\]
Due to the local Lipschitz continuity of the determinant and $\|C_n^{-1} - H_\star\|\to0$ we obtain the first statement for $\tilde \mu_n \coloneqq \mc N(x_n, \frac 1n B_n)$.
Furthermore, by the triangle inequality 
\begin{align*}
	d_\mathrm{H}(\mu_n, \tilde \mu_n)
	& \leq d_\mathrm{H}(\mu_n, \LAn) + d_\mathrm{H}(\LAn, \tilde \mu_n)\\
	& \leq  c n^{-1/2} + \sqrt{2} \left(1- \frac1{\det\left(\frac 12 (C_n^{-1/2} B_n^{1/2} + C_n^{1/2}B_n^{-1/2})\right)}\right)^{1/2}
\end{align*}
and exploiting the local Lipschitz continuity of $f(x) = \frac 1x$ and of the determinant, we obtain
\begin{align*}
	\left|\frac 1{\det(I)}- \frac1{\det\left(\frac 12 (C_n^{-1/2}B_n^{1/2} + C_n^{1/2}B_n^{-1/2})\right)}\right|
	& \leq c \|I - 0.5(C_n^{-1/2}B_n^{1/2} + C_n^{1/2}B_n^{-1/2})\|\\
	& \leq c \left(\|I - C_n^{-1/2}B_n^{1/2}\|  + \|I - C_n^{1/2}B_n^{-1/2}\|\right)
\end{align*}
with a generic constant $c>0$.
Moreover, due to the local Lipschitz continuity of the square root of a matrix, we get
\[
	\|I - C_n^{-1/2}B_n^{1/2}\|
	\leq \|C_n^{-1/2}\|\ \|C_n^{1/2} - B_n^{1/2}\|
	\leq c \|C_n^{1/2} - B_n^{1/2}\|
	\leq c \|C_n - B_n\|
\]
where we used that $\|C_n^{-1/2}\| \to \|H_\star^{1/2}\|$.
Furthermore, we get analogously that $\|I - C_n^{1/2}B_n^{-1/2}\| \leq c \|B_n - C_n\|$ using that $\|B_n^{-1/2}\| \to \|H_\star^{1/2}\|$.
Thus, the second statement of the first item follows by
\begin{align*}
	d_\mathrm{H}(\mu_n, \tilde \mu_n)
	& \leq d_\mathrm{H}(\mu_n, \LAn) + d_\mathrm{H}(\LAn, \tilde \mu_n) 
	\leq  c n^{-1/2} + c \left(2\|C_n - B_n\| \right)^{1/2} \in \mc (n^{-1/2}).
\end{align*}

The second item follows by applying the triangle inequality, expressing the Hellinger distance between $\mc N(x_n, \frac 1n B_n)$ and $\mc N(a_n, \frac 1n B_n)$ by
\begin{align*}
	d_\mathrm{H}(\mc N_{a_n, n^{-1}B_n}, \mc N_{x_n, n^{-1}B_n}) 
	=	
	\sqrt{2} \sqrt{1 -\exp\left( \frac n8 \|x_n-a_n\|^2_{B^{-1}_n}\right)}
\end{align*}
and estimating
\[
\left|1 -\exp\left( \frac n8 \|x_n-a_n\|^2_{B_n^{-1}}\right)\right|
\leq
\frac n8 \|x_n-a_n\|^2_{B_n^{-1}}
\leq
c n \|x_n-a_n\|^2
\in \mc O(n^{-1}),
\]
where we used the fact that the spd matrices $B_n$ converge to the spd matrix $H_\star$, hence, the sequence of the smallest eigenvalue of $B_n$ is bounded away from zero.

\subsection{Proof of Lemma \ref{lem:Prior_QMC_norm}}
\label{sec:Prior_QMC_norm}
For the following proof we use the famous Faa di Bruno-formula for higher order derivatives of compositions given in \cite{Hardy2006}.
To this end, let $v\colon \bbR^d \to \bbR$ and $u \colon \bbR\to \bbR$ be sufficiently smooth functions and define $w\coloneqq u\circ v$, i.e., $w\colon \bbR^d\to\bbR$.
For a subset $\vnu \subset \{1,\ldots,d\}$ we consider the partial derivative $\frac{\partial^{|\vnu|}w}{\partial x_{\vnu}}$ where we set
\[
	\partial x_{\vnu}
	= \partial x_{\nu_1} \cdots \partial x_{\nu_{|\vnu|}},
	\quad
	\vnu = \{\nu_1,\ldots,\nu_{|\vnu|}\}
	\text{ with ordered }
	\nu_1 < \nu_2 < \cdots < \nu_{|\vnu|}.
\]
We then obtain (see \cite{Hardy2006})
\[
	\frac{\partial^{|\vnu|}}{\partial x_{\vnu}}w(x)
	=
	\sum_{P \in \Pi(\vnu)}
	\partial^{|P|} u(v(x))
	\cdot
	\prod_{B \in P}
	\frac{\partial^{|B|}}{\partial x_{B}} v(x),
\]
where $\Pi(\vnu)$ denotes the set of all partitions $P$ of the set $\vnu \subset \{1,\ldots,d\}$, $B\in P$ refers to running through the elements or blocks of the partition $P$ with $|B|$ denoting the cardinality of $B\subset \vnu$ and $|P|$ the number of blocks in $P$, and the same notational convention for $\partial x_B$ as above, i.e.,
\[
	\partial x_{B}
	= \partial x_{\nu_1} \cdots \partial x_{\nu_{|B|}},
	\quad
	B = \{\nu_1,\ldots,\nu_{|B|}\} \subset \vnu
	\text{ with }
	\nu_1 < \nu_2 < \cdots < \nu_{|B|}.
\]
By the application of the multivariate Faa di Bruno-formula to
\[
	\Theta_n(x)
	= \exp(-n \Phi(x))
	= u(v(x))
	\quad
	\text{ with }
	\quad
	u(t) = \exp(- n t),
	\;
	v(x) = \Phi(x),
\]
we obtain for $\vnu \subset \{1,\ldots,d\}$ that
\begin{align*}
	\frac{\partial^{|\vnu|}}{\partial v_\vnu} \Theta_n(x)
	& =
	\sum_{P \in \Pi(\vnu)}
	(-n)^{|P|} 
	\Theta_n(v)
	\cdot
	\prod_{B \in P}
	\frac{\partial^{|B|}}{\partial x_B} \Phi(x).
\end{align*}
By setting $x_{-\vnu} \coloneqq x_{\{1,\ldots,d\}\setminus\vnu}$ we get
\begin{align*}
	\|\Theta_n\|^2_\gamma
	& =
	\sum_{\vnu \subset \{1,\ldots,d\}}
	\frac 1{\gamma_\vnu} \int_{[-\frac 12,\frac 12]^{|\vnu|}} \left( \int_{[-\frac 12,\frac 12]^{d-|\vnu|}} \frac{\partial^{|\vnu|}}{\partial x_\vnu} \Theta_n(x) \ \d x_{-\vnu}  \right)^2 \ \d x_\vnu\\
	& \geq
	\sum_{\vnu = \{1,\ldots,d\}}
	\frac 1{\gamma_\vnu} \int_{[-\frac 12,\frac 12]^{|\vnu|}} \left( \int_{[-\frac 12,\frac 12]^{d-|\vnu|}} \frac{\partial^{|\vnu|}}{\partial x_\vnu} \Theta_n(x) \ \d x_{-\vnu}  \right)^2 \ \d x_\vnu\\
	& =
	\frac {1}{\prod_j \gamma_j} 
	\int_{[-\frac 12,\frac 12]^{d}} \left( \sum_{P \in \Pi_d}
	(-n)^{|P|} \Theta_n(v)
	\cdot
	\prod_{B \in P}
	\frac{\partial^{|B|}}{\partial x_B} \Phi(x)\right)^2  \ \d x,
\end{align*}
where we shortened $\Pi_d \coloneqq \Pi(\{1,\ldots,d\})$.
We will now investigate, how---i.e., to which power w.r.t.~$n$---
\begin{equation}\label{equ:Fn}
	F_d(n) \coloneqq \int_{[-\frac 12,\frac 12]^{d}} \left( \sum_{P \in \Pi_d}
	(-n)^{|P|} \Theta_n(x)
	\cdot
	\prod_{B \in P}
	\frac{\partial^{|B|}}{\partial x_B} \Phi(x)\right)^2  \ \d x
\end{equation}
decays as $n\to \infty$.
To this end, we write
\begin{align*}
	F_d(n) & =  
	\sum_{P \in \Pi_d} \sum_{\tilde P \in \Pi_d}
	(-n)^{|P|+|\tilde P|} \int_{[-\frac 12,\frac 12]^{d}} \Theta^2_n(x) \prod_{B \in P}
	\frac{\partial^{|B|}}{\partial x_B} \Phi(x) \ \prod_{\tilde B \in \tilde P}
	\frac{\partial^{|\tilde B|}}{\partial x_{\tilde B}} \Phi(x) \d x
\end{align*}
and apply in the following Laplace's method in order to derive the asymptotics of
\[
	\int_{[-\frac 12,\frac 12]^{d}}
	h_{P,\tilde P}(x)
	\e^{-2n \Phi(x)} 
	\d x,
	\qquad
	h_{P,\tilde P}(x)
	\coloneqq\prod_{B \in P}
	\frac{\partial^{|B|}}{\partial x_B} \Phi(x) \ \prod_{\tilde B \in \tilde P}
	\frac{\partial^{|\tilde B|}}{\partial x_{\tilde B}} \Phi(x).
\]
Since in the considered setting of a uniform prior on $[-\frac 12,\frac 12]^d$ we have $I_n (x) = \Phi(x)$ for $x \in [-\frac 12,\frac 12]^d$, there holds that $x_n = x_\star$, $\Phi(x_n) = 0$, and, by construction, also $\nabla \Phi(x_n) = 0$.
The latter may cause a faster decay of $\int_{[-\frac 12,\frac 12]^{d}} h_{P,\tilde P} \e^{-2n \Phi} \d x$ than the usual $n^{-d/2}$ depending on the partitions $P, \tilde P$.
For example, let $P = \tilde P = \{ \{1\},\ldots, \{d\} \}$, i.e., $P$ and $\tilde P$ consist only of single blocks $B = \{i\}$, $i = 1,\ldots,d$, then 
\[
	h_{P,\tilde P}(x) = \prod_{j=1}^d \left( \frac{\partial}{\partial x_j} \Phi(x)\right)^2.
\]
Exploiting \eqref{equ:Laplace_coef_2} for the coefficients in the asymptotic expansion of $\int_{[-\frac 12,\frac 12]^{d}} h_{P,\tilde P} \e^{-2n \Phi} \d x$ one can calculate that for these particular partitions $P = \tilde P=\{ \{1\},\ldots, \{d\} \}$ we have
\[
	c_k(h_{P,\tilde P}) = 0 \quad \text{ for } k=0,\ldots,d-1,
\]
but
\[
	c_d(h_{P,\tilde P}) = \prod_{j=1}^d \frac{\kappa_{2\ve_j}}{2} \frac{\partial^2}{\partial x^2_j}\Phi(x_\star) \neq 0,
\]
since $\frac{\partial^2}{\partial x^2_j}\Phi(x_\star) \neq 0$ due to $\nabla^2 \Phi(x_\star)$ being positive definite.
Hence, for these partitions $|P|=|\tilde P| = d$ we get
\[
	\int_{[-\frac 12,\frac 12]^{d}}
	h_{P,\tilde P}(x)
	\e^{-2n \Phi(x)} 
	\d x 
	= c_d(h_{P,\tilde P})  n^{-d/2 - d} + \mc O(n^{-3d/2 - 1}).
\]
We can extend this reasoning to arbitrary partitions $P,\tilde P \in \Pi_d$.
To this end, let $|P|_1 \coloneqq |\{B \in P\colon |B|=1 \} |$ denote the number of single blocks $|B|=1$ in $P \in \Pi_d$.
Then, we know by the definition of $h_{P,\tilde P}$ that $h_{P,\tilde P}$ posseses a zero of order $|P|_1+|\tilde P|_1$ in $x_\star$.
This in turn, implies that the first $\left\lfloor \frac{|P|_1+|\tilde P|_1}2\right\rfloor $ coefficients in the asymptotic expansion of $\int_{[-\frac 12,\frac 12]^{d}} h_{P,\tilde P} \e^{-2n \Phi} \d x$ are zero, hence, 
\[
	\int_{[-\frac 12,\frac 12]^{d}}
	h_{P,\tilde P}(x)
	\e^{-2n \Phi(x)} 
	\d x 
	\; \sim \;
	c_{P,\tilde P} n^{-d/2 - \left\lfloor |P|_1/2 + |\tilde P|_1/2 \right\rfloor}.
\]
Thus, for arbitrary $P,\tilde P \in \Pi_d$ we have
\[
	(-n)^{|P|+|\tilde P|} 
	\int_{[-\frac 12,\frac 12]^{d}} \Theta^2_n(x) \prod_{B \in P}
	\frac{\partial^{|B|}}{\partial x_B} \Phi(x) \prod_{\tilde B \in \tilde P}
	\frac{\partial^{|\tilde B|}}{\partial x_{\tilde B}} \Phi(x) \d x
	\; \sim 
	\; c_{P,\tilde P} n^{-d/2 + |P| + |\tilde P| - \left\lfloor |P|_1/2 + |\tilde P|_1/2 \right\rfloor}.
\]
If we maximize the exponent on the righthand side we get that
\[
	\max_{P, \tilde P \in \Pi_d} |P| + |\tilde P| - \left\lfloor |P|_1/2+ |\tilde P|_1/2 \right\rfloor
	=
	d
\]
where the maximum is obtained, e.g., for the above choice of $P = \tilde P=\{ \{1\},\ldots, \{d\} \}$.
This means that certain summands in $F_d(n)$ grow like $n^{d/2}$ whereas the other ones grow slower w.r.t.~$n$.
Thus, we get that $F_d(n) \sim c n^{d/2}$ which yields the statement.

\subsection{Proof of Lemma \ref{lem:Laplace_QMC_norm}}
\label{sec:Laplace_QMC_norm}
Since the transformation
\[
	g_n(x) \coloneqq x_\star+ n^{-1/2} Ax,
	\qquad
	A\coloneqq\sqrt{2\ln|\tau|} Q D^{-1/2},
\]
is linear, we have for $j \in \{1,\ldots,d\}$
\[
	\frac{\partial}{\partial x_j} \Phi(g_n(x))
	=
	n^{-1/2} \left( \nabla \Phi\right)(g_n(x))^\top A_{\cdot j} 
\] 
where $A_{\cdot j}$ denotes the $j$th column of $A$.
Thus, for a $\vnu \subset \{1,\ldots,d\}$ we get
\[
	\frac{\partial^{|\vnu|}}{\partial x_{\vnu}} \Phi(g_n(x))
	=
	n^{-|\vnu|/2} \left( \nabla^{|\vnu|} \Phi\right)(g_n(x))[A_{\cdot \nu_1}, \ldots, A_{\cdot \nu_{|\vnu|}} ]
\] 
where the last term on the righthand side denotes the application of the multilinear form $(\nabla^{|\vnu|} \Phi)(g_n(x))\colon \bbR^{d\times |\vnu|} \to \bbR$ to the $|\vnu|$ arguements $A_{\cdot \nu_j} \in \bbR^d$.
To keep the notation short, we denote by $\nabla^{|\vnu|}\Phi(g_n(x))[A_\vnu]$ the term $(\nabla^{|\vnu|} \Phi)(g_n(x))[A_{\cdot \nu_1}, \ldots, A_{\cdot \nu_{|\vnu|}} ]$.
By Faa di Bruno we obtain now for any $\vnu \subset \{1,\ldots,d\}$ that
\begin{align*}
	\frac{\partial^{|\vnu|}}{\partial v_\vnu} \Theta_n(g_n(x))
	& =  \sum_{P \in \Pi(\vnu)}
	(-n)^{|P| - |\vnu|/2} \Theta_n(g_n(x))
	\cdot
	\prod_{B \in P}
	\nabla^{|B|}\Phi(g_n(x))[A_B]
\end{align*}
which yields
\begin{align*}
	\|\Theta_n\circ g_n\|^2_\gamma
	& =
	\sum_{\vnu \subset \{1,\ldots,d\}}
	\frac 1{\gamma_\vnu} \int_{[-\frac 12,\frac 12]^{|\vnu|}}  \left( \int_{[-\frac 12,\frac 12]^{d-|\vnu|}}  \frac{\partial^{|\vnu|}}{\partial v_\vnu} (\Theta_n\circ g_n)(x) \ \d x_{-\vnu}  \right)^2 \ \d x_\vnu
	\leq
	\sum_{\vnu \subset \{1,\ldots,d\}}
	\frac {F_\vnu(n)}{\gamma_\vnu} 
\end{align*}
where
\begin{equation}\label{equ:Fn_2}
	F_\vnu(n)
	\coloneqq
	\int_{[-\frac 12,\frac 12]^{|\vnu|}} \left( \int_{[-\frac 12,\frac 12]^{d-|\vnu|}} \sum_{P \in \Pi(\vnu)}
	n^{|P|-|\vnu|/2} \left| \Theta_n(g_n(x)) \right|
	\prod_{B \in P}
	\left|\nabla^{|B|}\Phi(g_n(x))[A_B]\right| 
	\ \d x_{-\vnu}  \right)^2 \ \d x_\vnu.
\end{equation}
Note, that we can bound 
\[
	\left|\nabla^{|B|}\Phi(g_n(x))[A_B]\right| 
	\leq 
	\left\|\nabla^{|B|}\Phi(g_n(x))\right\| \ \prod_{j\in B} \|A_j\| 
\]
where we set the ``norm'' of the multilinear form $\nabla^{|B|}\Phi(g_n(x)) \colon \bbR^{d \times |B|} \to \bbR$ as
\[
	\left\| \nabla^{|B|}\Phi(g_n(x))\right\|
	\coloneqq
	\sup_{\|x_j\| \leq 1} \left| \nabla^{|B|}\Phi(g_n(x))[x_1,\ldots,x_{|B|}] \right|.
\]
Setting $c_A \coloneqq \prod_{j=1}^d \|A_j\|$ we get
$
	\left|\prod_{B \in P} \nabla^{|B|}\Phi(g_n(x))[A_B] \right|
	\leq c_A \prod_{B \in P} \left\|\nabla^{|B|}\Phi(g_n(x))\right\|
$
and obtain by multiplication---omitting the integral domains for a moment---that
\begin{align*}
	F_\vnu(n)
	& = 
	\sum_{P \in \Pi(\vnu)} \sum_{\tilde P \in \Pi(\vnu)}
	n^{|P|+|\tilde P|-|\vnu|} 
	\int 
	\left(\int \left| \Theta_n(g_n(x)) \prod_{B \in P}
	\nabla^{|B|}\Phi(g_n(x))[A_B]
	\right| \d x_{-\nu} \right)\\
	& \qquad \qquad
	\left(\int \left| \Theta_n(g_n(x)) \prod_{\tilde B \in \tilde P}
	\nabla^{|\tilde B|}\Phi(g_n(x))[A_{\tilde B}]
	\right| \d x_{-\nu} \right)
	\d x_\nu\\
	& \leq
	\sum_{P,\tilde P \in \Pi(\vnu)} 
	n^{|P|+|\tilde P|-|\nu|} 
	\left\| (\Theta_n \circ g_n)\ c_A \ \prod_{B \in P} \ \left\| \left(\nabla^{|B|}\Phi\right) \circ g_n\right\| \right\|_{L^2([-\frac 12,\frac 12]^d)} \\
	& \qquad 
	\left\| (\Theta_n\circ g_n) \ c_A \ \prod_{\tilde B \in \tilde P} \left\| \left(\nabla^{|\tilde B|}\Phi\right) \circ g_n)\right\|\right\|_{L^2([-\frac 12,\frac 12]^d)}\\
	& \leq c_A^2\left( \sum_{P\in \Pi(\vnu)}  
	n^{|P| - |\vnu|/2} \left\| (\Theta_n \circ g_n) \prod_{B \in P}  \left\| \left(\nabla^{|B|}\Phi\right) \circ g_n \right\| \right\|_{L^2([-\frac 12,\frac 12]^d)} \right)^2
\end{align*}
where we used the Cauchy--Schwarz and Jensen's inequality in the second line.
We apply Laplace's method in order to examine the $L^2$-norm above:
\begin{align*}
	\left\| (\Theta_n \circ g_n) (h_P \circ g_n)\right\|^2_{L^2([-\frac 12,\frac 12]^d)} 
	& = \int_{[-\frac 12,\frac 12]^d} \e^{- 2n \Phi(x_\star + n^{-1/2} A x)} \ h_P^2(x_\star + n^{-1/2}Ax) \ \d x
\end{align*}
where $h_P = \prod_{B \in P}  \left\| \left(\nabla^{|B|}\Phi\right) \circ g_n \right\|$.
By the substitution $y \coloneqq g_n(x) = x_\star + n^{-1/2} Ax$ we get 
\begin{align*}
	\left\| (\Theta_n \circ g_n) (h_P \circ g_n)\right\|^2_{L^2([-\frac 12,\frac 12]^d)} 
	& 
	= \frac{1}{C_{J_{Trans}}(n)}\int_{[-\frac 12,\frac 12]^d} \exp(- 2n \Phi(y)) \ h_P^2(y) \ \d y,
\end{align*}
where $C_{J_{Trans}}(n) = \det(n^{-1/2}A) = n^{-d/2}\det(A)$. 
Since also $2\Phi$ satisfies the assumptions of Theorem \ref{theo:laplace_method} and $\Phi(x_\star)=0$ we obtain
\[
	\left\| (\Theta_n \circ g_n) (h_P \circ g_n)\right\|^2_{L^2([-\frac 12,\frac 12]^d)} 
	=
	\frac{(2P)^{d/2}h_P^2(x_\star)}{\det(\nabla^2\Phi(x_\star))}
	+ \mc O(n^{-1}).
\]
However, since $\nabla \Phi(x_\star) = 0$ there holds $h_P^2(x_\star)=0$ if there exists a single block $|B|=1$ in $P$ which then yields to a decay of the $L^2$-norm as least as fast as $n^{-1}$.
In particular, denoting by $|P|_1$ the number of single blocks in $P$ we obtain by the same reasoning as in Section \ref{sec:Prior_QMC_norm} that
\[
	\frac{1}{C_{J_{Trans}}(n)} \int_{[-\frac 12,\frac 12]^d} \exp(- 2n \Phi) \prod_{B \in P} \left(\frac{\partial^{|B|}}{\partial x_B} \Phi\right)^2 \ \d y
	\sim c_P \ n^{-|P|_1}
\]
where $c_P = c_{|P|_1}(h^2_P)$ as in \eqref{equ:Laplace_coef}.
Hence,
\begin{align*}
	F_\vnu(n)
	& 
	\sim
	c_A^2
	 \left( \sum_{P\in \Pi(\vnu)} c_P n^{|P|- |\nu|/2 -|P|_1/2} \right)^2.
\end{align*}
Similarly to Section \ref{sec:Prior_QMC_norm} we can derive $\max_{P \in \Pi(\vnu)} |P|- |P|_1/2 =  |\vnu|/2$ which yields that $	F_\vnu(n) \in \mc O(1)$ as $n\to \infty$ and concludes the proof.

%%%
%
%%%
\subsection{Proof of Equation \eqref{equ:Var_mu_2}}
\label{sec:Var_mu_n}
We show in the following that for $f,\pi_0\in C^4(\bbR^d, \bbR)$ and $\Phi\in C^5(\bbR^d, \bbR)$ we have
\[
	\Var_{\mu_n}(f)
	=
	n^{-1} \|\nabla f(x_\star)\|^2_{H_\star^{-1}} + \mc O(n^{-2}).
\]
To this end, we use $\Var_{\mu_n}(f) = \evalt{\mu_n}{f^2} - \evalt{\mu_n}{f}^2$ and the asymptotics
\begin{align*}
	\evalt{\mu_n}{f} & = f(x_\star) + \tilde c_1(f,\pi_0) n^{-1} + \mc O(n^{-2}),
	& \evalt{\mu_n}{f^2} & = f^2(x_\star) + \tilde c_1(f^2,\pi_0) n^{-1} + \mc O(n^{-2}),
\end{align*}
where $\tilde c_1(f,\pi_0) = \frac{1}{c_0(\pi_0)} c_1(f\pi_0) - \frac{c_1(\pi_0)}{c^2_0(\pi_0)}c_0(f\pi_0)$, analogously for $\tilde c_1(f^2,\pi_0)$.
Thus, we have
\begin{align*}
	\Var_{\mu_n}(f)
	& =
	f^2(x_\star) + \tilde c_1(f^2,\pi_0) n^{-1} + \mc O(n^{-2})
	- \left[f(x_\star) + \tilde c_1(f,\pi_0) n^{-1} + \mc O(n^{-2})\right]^2\\
	& = \left[\tilde c_1(f^2,\pi_0) - 2f(x_\star)\tilde c_1(f,\pi_0)  \right] n^{-1} + \mc O(n^{-2}).
\end{align*}
Since $\Var_{\mu_n}(f) = \Var_{\mu_n}(f-\mathrm{const})$ we may assume w.l.o.g.~that $f(x_\star) = 0$.
Hence,
\[
	\Var_{\mu_n}(f)
	= \tilde c_1(f^2,\pi_0) n^{-1} + \mc O(n^{-2})
	= \frac{c_1(f^2\pi_0)}{c_0(\pi_0)} n^{-1} + \mc O(n^{-2}),
\]
where the last equality follows due to $c_0(f^2\pi_0) \propto f^2(x_\star) = 0$.
We recall that
\[
	c_1(f^2\pi_0)
	=
	\sum_{\valpha \in \bbN_ 0^d \colon |\valpha| = 1}
	\frac{\kappa_{2\valpha}}{(2\valpha)!} D^{2\valpha} \widetilde F_0(0)
	=
	\sum_{i=1}^d
	\frac{\kappa_{2\ve_i}}{2} \frac{\partial^2}{\partial x_i^2} \widetilde F_0(0),
\]
where
\[
	\widetilde F_0(x) := f^2(h(x))\, \pi_0(h(x))\,\det(\nabla h(x))
\]
with $h\colon \Omega \to U(x_\star)$ denoting the diffeomorphism between $0 \in \Omega \subset \bbR^d$ and a particular neighborhood $U(x_\star)$ of $x_\star$ mapping $h(0) = x_\star$ and such that $\det(\nabla h(0)) = 1$.
Moreover, as outlined in \cite[Section IX.2]{Wong2001}, we have that 
%e following relation between $h$ and $H_\star = \nabla^2 \Phi(x_\star)$
$
	 \nabla h(0)^\top H_\star  \nabla h(0) = \Lambda
$
where $\Lambda := \diag(\lambda_1,\ldots,\lambda_d)$ denotes the diagonal matrix containing the eigenvalues $\lambda_i$ of $H_\star$.
This is equivalent to
\[
	\nabla h(0)\Lambda^{-1}\nabla h(0)^\top = H_\star^{-1}
\]
which we will use later on. 
Furthermore, since $\kappa_{\valpha} = \kappa_{\alpha_1}\cdots \kappa_{\alpha_d} \in\bbR$ and $\kappa_{\alpha_i} = (2/\lambda_i)^{(\alpha_i +1)/2} \Gamma((\alpha_i+1)/2)$ for even $\alpha_i$, we get
\[
	\kappa_{2\ve_i} 
	= 
	\frac{\sqrt{\pi}}{2}\left(\frac {2}{\lambda_i}\right)^{3/2}
	\prod_{j\neq i} \sqrt{\frac {2\pi}{\lambda_j}}
	=
	\frac{\sqrt{\det(2\pi H^{-1}_\star)}}{\lambda_i}
	=
	\frac{\kappa_{\boldsymbol 0}}{\lambda_i},
\]
since $\kappa_{\boldsymbol 0} = \det(2\pi H^{-1}_\star)^{1/2}$.
Thus,
\[
	\frac{c_1(f^2\pi_0)}{c_0(\pi_0)}
	=
	\frac{\kappa_{\boldsymbol 0}\sum_{i=1}^d \frac{1}{2\lambda_i} \frac{\partial^2}{\partial x_i^2} \widetilde F_0(0)}{\kappa_{\boldsymbol 0} \pi_0(x_\star)}
	=
	\sum_{i=1}^d \frac{1}{2\lambda_i \pi_0(x_\star)} \frac{\partial^2}{\partial x_i^2} \widetilde F_0(0).
\]
We now compute $\frac{\partial^2}{\partial x_i^2} \widetilde F_0(0)$.
To this end, let us introduce $G_0(x) := \pi_0(h(x))\,\det(\nabla h(x))$, i.e., $\widetilde F_0(x) = f^2(h(x)) G_0(x)$ as well as $G_0(0) = \pi_0(x_\star)$.
We first derive by the product and chain rule
\[
	\frac{\partial}{\partial x_i} \widetilde F_0(x)
	=
	2f(h(x))\ \nabla f(h(x))^\top \left[\frac{\partial}{\partial x_i}h(x)\right]\ G_0(x)
	+
	f^2(h(x))\frac{\partial}{\partial x_i} G_0(x).
\]
For the second derivative $\frac{\partial^2}{\partial x_i^2} \widetilde F_0(x)$ evaluated at $x=0$ we can exploit once more that w.l.o.g.~$f(x_\star)=0$ which then yields
\[
	\frac{\partial^2}{\partial x^2_i} \widetilde F_0(0)
	=
	2 \left| \nabla f(x_\star)^\top \frac{\partial}{\partial x_i}h(0)\right|^2 \ \pi_0(x_\star),
\]
i.e., all other terms which we obtain by applying the product (and chain) rule are zero, since they contain $f(h(0)) = f(x_\star) = 0$ as a factor.
In summary, we end up with
\begin{align*}
	\frac{c_1(f^2\pi_0)}{c_0(\pi_0)}
	& = \sum_{i=1}^d \frac{1}{2\lambda_i \pi_0(x_\star)} \frac{\partial^2}{\partial x_i^2} \widetilde F_0(0)
	= \sum_{i=1}^d \frac{1}{\lambda_i} \left| \nabla f(x_\star)^\top \frac{\partial}{\partial x_i}h(0)\right|^2\\
	& = \left[\nabla h(0)^\top \nabla f(x_\star)\right]^\top \Lambda^{-1} \left[\nabla h(0)^\top \nabla f(x_\star)\right]\\
	%& = \nabla f(x_\star)^\top \nabla h(0) \Lambda^{-1} \nabla h(0)^\top \nabla f(x_\star) 
	& = \nabla f(x_\star)^\top H_\star^{-1} \nabla f(x_\star).
\end{align*}
%exploiting the relation $H_\star = \nabla h(0) \Lambda \nabla h(0)^\top$.
This concludes the proof.

%%
%% PROBLEM
%%

\rev{\section{Visualization of the necessity of the third term in Assumption \ref{assum:LA_Conv}}
%Im Beweis von Lemma 1, "Bounding $J_2$" m\"ussen wir zeigen, dass
%\[
%	\int_{\M^c_n} \e^{-n R_n(x)} \ \LAn(\d x)
%	=
%	\int_{\M^c_n} \e^{-n I_n(x)} \ \frac{\d x}{\tilde Z_n}
%	\in \mc O(n^{-1/2}).
%\]
%Da $\tilde Z^{-1}_n \in \mc O(n^{d/2})$, gilt es also
%\[
%	n^{(d+1)/2} \int_{\M^c_n} \e^{-n I_n(x)} \ \d x
%	\in \mc O(1)
%\]
%nachzuweisen. Dabei ist 
%\[
%	\M^c_n
%	\coloneqq\{x \in \P_n^c\colon \Phi_n(x)<\Phi_n(x_n) + \delta_r\},
%	\qquad
%	\P^c_n
%	\coloneqq 
%	\{x\in B^c_r(x_n) \cap \S_0 \colon R_n(x) \leq 0\}.
%\]
%
%\subsection{Das Problem bzw. ein Gegenbeispiel} 
Consider the following setting: 
Let $\pi_0(x) \propto \exp(- x^2)$, $x\in\bbR$, (with suitable normalization) and let $\Phi_n\colon \bbR\to \bbR$ be a smooth function satisfying the following conditions:
\begin{itemize}
\item
For some $0 < r_0 < \frac 12$ we have $\Phi_n(x) = x^2$ for $|x|\leq r_0$.
\item
For some $0 < \delta < 1$ and $r_n := \sqrt{n} > r_0$ as well as $r_n^+ := r_n + \exp(n\delta)$ we have for all  $r_n  < |x| \leq r_n^+$ that
\[
	\Phi_n(x) = - \frac{x^2}n + \delta \leq -1 + \delta < 0.
\]
\item
For a $r'_n > r_n^+$ we have $\Phi_n(x) \equiv 0$ for all $|x| \geq r'_n$.
\item
On $r_0 < |x| < r_n$ and $r_n^+ < |x| < r'_n$ the function $\Phi$ is smooth and monotone.
\end{itemize}
This means that $I_n(x) = \Phi_n(x) - \frac 1n \log(\pi_0(x))$ has the following properties:
\begin{enumerate}
\item
$x_n := \argmin I_n = 0$ with $I_n(x_n) = 0$ and $I_n(x) = \frac {n+1}n x^2$ f\"ur $|x| \leq r_0$. 
%\item The quadratic approximation used in the Laplace approximation is $\widetilde I_n(x) = \frac {n+1}n x^2$ for all $x\in\bbR$.
\item For $r_n  < |x| \leq r_n^+$ we have $I_n(x) \equiv \delta$.
\item The quadratic approximation used in the Laplace approximation is then $\widetilde I_n(x) = \frac {n+1}n x^2$ for all $x$ which yields for $r_n  < |x| \leq r_n^+$
\[
	R_n(x) 
	= I_n(x) - \widetilde I_n(x)
	= \delta - \frac {n+1}n x^2 
	\leq \delta - (n+1) < 0.
\]
\end{enumerate}
\begin{figure}[hbtp]
\centering
\includegraphics[width=0.7\textwidth]{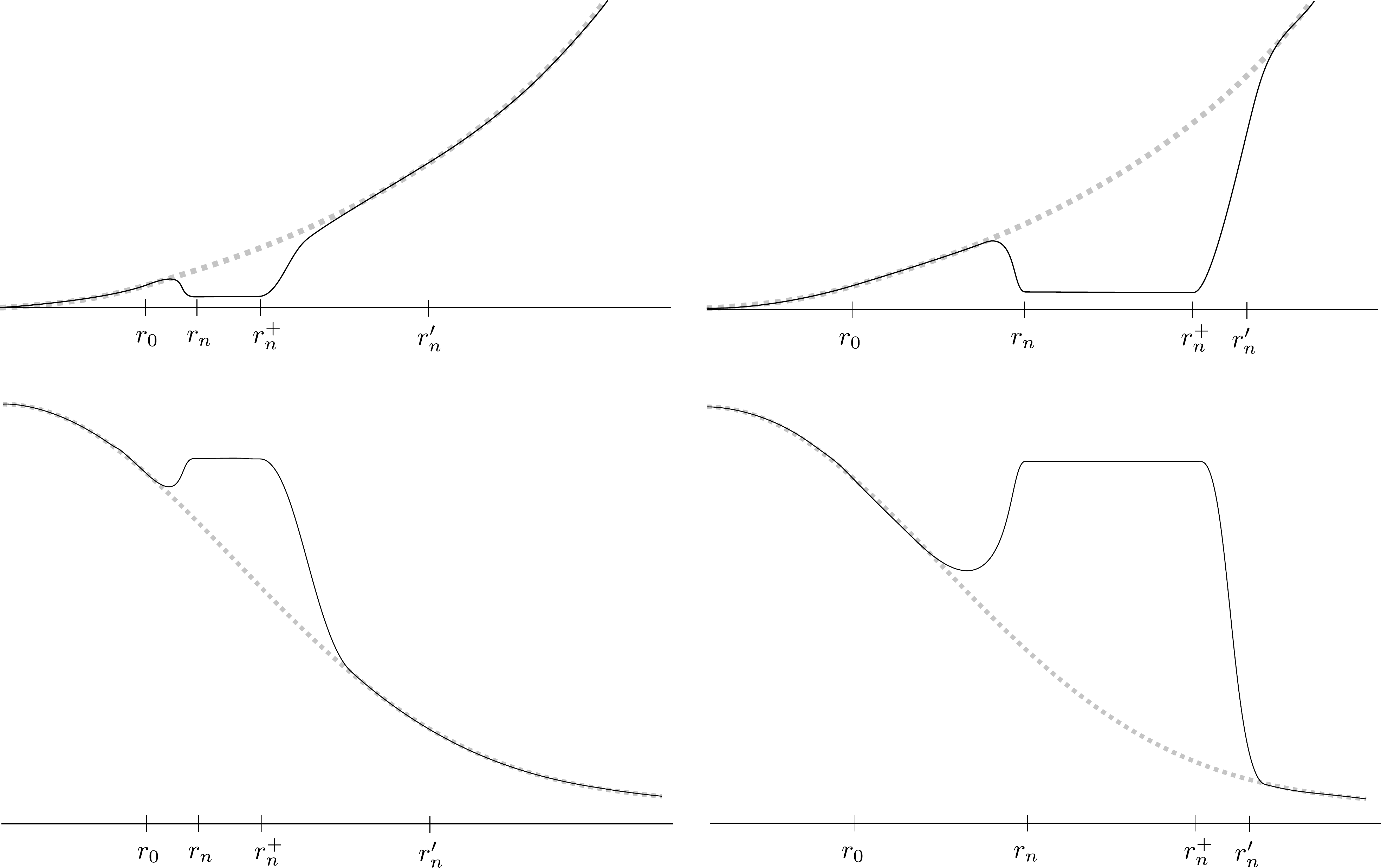}
\caption{Sketch of $I_n$ (top) and $\mu_n$ (bottom) along with quadratic approximations $\tilde I_n$ and $\LAn$ (dotted lines). Right column shows the same for a larger value of $n$.}
\end{figure}
Note that Assumption \ref{assum:LA_0} and items 1 and 2 of Assumption \ref{assum:LA_Conv} are fulfilled.
However, we will now show that the resulting sequence of measures $\mu_n$ with normalized Lebesgue density $\frac 1{Z_n} \exp(-n I_n)$ is not tight which by Prokhorov's theorem precludes the existence of a weakly converging subsequence.
In particular, the $\mu_n$ are not a Cauchy sequence in total variation or Hellinger distance.
To this end, we first obtain for the normalizing constants $Z_n$ that
\[
	Z_n = \int_{\bbR} \e^{-n I_n(x)} \ \d x
	\geq
	\int_{\{ x\colon r_n < |x| < r_n^+\}} \e^{-n \delta} \ \d x
	= 2\e^{n \delta}
	\e^{-n \delta}
	= 2.
\]
Moreover, we have
\[
	 \int_{|x| < r_0}  \e^{- n I_n(x) }\ \d x 
	 = \int_{|x| < r_0}  \e^{- (n+1) x^2 }\ \d x 
	\leq 	2r_0 
	\leq 1.
\]
Hence, we get for the ball $B_{r_0}(x_n) = \{x\colon |x| < r_0\}$
%has probability strictly less than $\frac 12$:
\[
	\mu_n(B_{r_0}(x_n))
	= \frac1{Z_n}\int_{|x| < r_0}  \e^{-(n+1) x^2}\ \d x
	\leq \frac{2r_0}{Z_n} \leq r_0 \leq \frac 12.
\]
This means, that for $n\to \infty$ the fixed ball $B_{r_0}(x_n)$ around global peak of the density of $\mu_n$ has $\mu_n$-probability mass less than $\frac 12$. 
In other words, the measures $\mu_n$ do \emph{not} concentrate around $x_n = 0$.
Additionally, we can assume that $\Phi_n$ is such that for a constant $c < \infty$ we have
\begin{align*}
	\int_{ r_0 < |x| < r_n}  \e^{-n \Phi_n(x) - x^2} \ \d x 
	+ \int_{ r^+_n < |x| < r'_n} \e^{-n \Phi_n(x) - x^2} \ \d x  
	+ \int_{ r_n'  < |x|}  \e^{-x^2} \d x \leq c
\end{align*}
and, thus, $Z_n \leq 3+c$. 
But then, for all $n\in\bbN$
\[
	\mu_n( [n, + \infty]) \geq \frac 2{Z_n} \geq \frac 2{3+c} > 0, 
\]
which means that the sequence $\{\mu_n\}_n$ is not tight.
This should demonstrate sufficiently why the third term in Assumption \ref{assum:LA_Conv} is needed for a suitable concentration of the measures $\mu_n$.}

\subsection{A sufficient condition for the third term in Assumption \ref{assum:LA_Conv}}
We provide now a useful sufficient condition on the functions $\Phi_n$ and $\pi_0$ such that the third term in Assumption \ref{assum:LA_Conv} is fulfilled.
In order to satisfy $I_n(x_n) = 0$ as required in Assumption \ref{assum:LA_0} we set
\[
	\iota_n := \Phi_n(x_n) - \frac 1n \log \pi_0(x_n)
\]
consider $\widehat{\Phi}_n(x) := \Phi_n(x) - \iota_n$ and represent the measures $\mu_n$ by
\[
	\mu_n(\d x) = \frac 1{Z_n} \exp(- n \widehat{\Phi}_n(x)) \ \pi_0(\d x),
	\qquad
	Z_n = \int_{\S_0} \exp(- n \widehat{\Phi}_n(x)) \ \pi_0(\d x),
\]
with $I_n(x) = \widehat{\Phi}_n(x) - \frac 1n \log \pi_0(x)$ in accordance to Remark \ref{rem:In_zero}.
We then have
\begin{align*}
	\exp(-n I_n(x)) 
	 & = \exp\left( -n \widehat{\Phi}_n(x) \right) \ \pi_0(x) 
	=  \exp\left( n \iota_n  - n\inf_{S_0} \Phi_n - n(\Phi_n(x) - \inf_{S_0} \Phi_n)  \right) \pi_0(x)\\
	& \leq \exp\left( n (\iota_n  - \inf_{S_0} \Phi_n)\right) \pi_0(x)
	= \exp\left( n (\Phi_n(x_n)  - \inf_{S_0} \Phi_n)\right) \frac{\pi_0(x)}{\pi_0(x_n)}.
\end{align*}
By the first term in Assumption \ref{assum:LA_Conv}, we can conlude that $\pi_0(x_n) \to \pi_0(x_\star) >0$, i.e.,
the third term of Assumption \ref{assum:LA_Conv} follows if there exists an $\epsilon >0$ such that
\[
	\pi_0 \in L^{1-\epsilon}(\S_0;\bbR), 
\]
and if
\[
	\Phi_n(x_n) - \inf_{\S_0} \Phi_n \in \mc O(n^{-1}).
\]	
Note that for $\Phi_n \equiv \Phi$ we have $\Phi(x_n) - \inf_{\S_0} \Phi\in \mc O(n^{-2})$, since then $x_\star = \lim_{n\to\infty} x_n$ is a minimizer of $\Phi$ and $\Phi$ is strongly convex in a neighborhood of $x_\star$, cf. the paragraph ``Preliminaries'' in Section \ref{sec:numerics} and Remark \ref{rem:Conv_xn}.

\bibliographystyle{abbrv}
\bibliography{literature}

\end{document}